\newtheorem{theorem}{Theorem}[section]
\newtheorem{coro}[theorem]{Corollary}
\newtheorem{lemm}[theorem]{Lemma}
\newtheorem{prop}[theorem]{Proposition}
\newtheorem{conjecture}[theorem]{Conjecture}
\newtheorem{theo}[theorem]{Theorem}
\theoremstyle{definition}
\newtheorem{defi}[theorem]{Definition}
\newtheorem{rema}[theorem]{Remark}
\newtheorem{exam}[theorem]{Example}
\newtheorem{thmx}{Theorem}
\newcommand{\el}{\mathrm{el}}
\newcommand{\ab}{\mathrm{ab}}
\newcommand{\Q}{\mathbb{Q}}
\newcommand{\Z}{\mathbb{Z}}
\newcommand{\Ram}{\mathrm{Ram}}
\DeclareMathOperator{\Ind}{Ind}
\DeclareMathOperator{\Inf}{Inf}
\DeclareMathOperator{\Res}{Res}
\title{On the Galois structure of units in totally real $p$-rational number fields}
\author{Zakariae Bouazzaoui, Donghyeok Lim}
\date{}
\begin{document}
 
\maketitle

\begin{abstract} 
The theory of factor-equivalence of integral lattices establishes a far-reaching relationship between the Galois module structure of the unit group of the ring of integers of a number field and its arithmetic. For a number field $K$ that is Galois over $\mathbb{Q}$ or an imaginary quadratic field, we prove a necessary and sufficient condition on the quotients of class numbers of subfields of $K$, for the quotient $E_{K}$ of the unit group of the ring of integers of $K$ modulo the subgroup of roots of unity to be factor equivalent to the standard cyclic Galois module. Using strong arithmetic properties of totally real $p$-rational number fields, we prove that the non-abelian $p$-rational $p$-extensions of $\mathbb{Q}$ do not admit Minkowski units, thereby extending a result of Burns to non-abelian number fields. We also study the relative Galois module structure of $E_{L}$ for varying Galois extensions $L/F$ of totally real $p$-rational number fields whose Galois groups are isomorphic to a fixed finite group $G$. In that case, we prove that there exists a finite set $\Omega$ of $\mathbb{Z}_p[G]$-lattices such that for every $L$, $\mathbb{Z}_{p} \otimes_{\mathbb{Z}} E_{L}$ is factor equivalent to $\mathbb{Z}_{p}[G]^{n} \oplus X$ as $\mathbb{Z}_p[G]$-lattices for some $X \in \Omega$ and an integer $n \geq 0$.
\end{abstract}

\vskip 10pt
\textit{Keywords} : $p$-rationality, Galois module structure of algebraic units, factor equivalence, regulator constant
\vskip 5pt
\textit{2020 Mathematics Subject Classification} : 11R33, 11R80

\section{Introduction}

Let $k$ be a number field that is either $\mathbb{Q}$ or an imaginary quadratic number field. For brevity, let us call such a number field \textit{admissible} (cf. \cite{Burns2}). Let $K$ be a Galois extension of $k$ with Galois group $G_{K/k}$ such that the infinite places of $k$ are unramified in $K$. Let $\mathcal{O}_K$ be its ring of integers. In the late nineteenth century, Minkowski proved that there exists a unit in $\mathcal{O}_K$ whose conjugates over $k$ generate a full-rank subgroup of $\mathcal{O}_{K}^{\times}$. As a consequence of Minkowski's theorem, if we regard $\mathcal{O}_{K}^{\times}$ as a $\mathbb{Z}[G_{K/k}]$-module, then there is an isomorphism of $\mathbb{Q}[G_{K/k}]$-modules
\begin{equation*}
    \mathbb{Q} \otimes_{\mathbb{Z}} \mathcal{O}_{K}^{\times} \simeq A_{G_{K/k}} := \mathbb{Q}[G_{K/k}]/(s_{G_{K/k}}).
\end{equation*}
For a finite group $G$, we write $s_G$ for the element $\sum_{g\in G}g$ in $\mathbb{Z}[G] \subset\mathbb{Q}[G]$, and write $(s_G)$ for the submodule generated by $s_G$. 

Despite the long history of interest in $\mathcal{O}^{\times}_K$, the $\mathbb{Z}[G_{K/k}]$-module structure of $\mathcal{O}^{\times}_{K}$ is barely understood until now. One of the principal problems in this area is to determine if $K$ has a (strong) \textit{Minkowski unit}, in other words, a unit $u \in \mathcal{O}^{\times}_K$ whose conjugates over $k$ generate $\mathcal{O}_{K}^{\times}$ modulo the subgroup $\mu(K)$ of roots of unity of $K$. The problem is equivalent to asking whether the \textit{unit lattice} $E_{K}:= \mathcal{O}_{K}^{\times}/\mu(K)$ is isomorphic to the \textit{standard cyclic Galois module} $\mathcal{A}_{G_{K/k}}:=\mathbb{Z}[G_{K/k}]/(s_{G_{K/k}})$ as $\mathbb{Z}[G_{K/k}]$-modules.

Studying the Galois module structure of the unit lattice is complicated for the following reasons. First of all, it is very difficult to compute a system of fundamental units. Therefore, number theorists tried to use arithmetic information on $K$ to study the cyclicity of $E_{K}$ as a $\mathbb{Z}[G_{K/k}]$-module. However, the involved arithmetic information is still difficult to obtain. Lastly, to be sure that certain arithmetic information suffices to guarantee that $E_{K}$ is cyclic, we need enough results on the classification of $\mathbb{Z}[G_{K/k}]$-lattices of the same rank as $E_K$. However, even for a finite group $G$ with a simple structure, classifying integral representations of $G$ is very difficult. Hence, studying the Galois module structure of the unit lattice requires knowledge of both the theory of integral representations of finite groups and the arithmetic of number fields. We examine both aspects of the problem in this paper, which is organized into $8$ sections.

\vskip 13pt
\begin{center}
\noindent I. Representation-Theoretic Part (\S \ref{sec-genus} -- \S \ref{Proof of the Main Theorem})
\end{center}
\vskip 5pt

Several approaches have been introduced to study the integral Galois module structure of $E_{K}$. One approach is to study whether $\mathbb{Z}_p \otimes_{\mathbb{Z}} E_{K}$ is isomorphic to $\mathbb{Z}_{p}[G_{K/k}]/(s_{G_{K/k}}) \simeq \mathbb{Z}_p \otimes_{\mathbb{Z}} \mathcal{A}_{G_{K/k}}$ as $\mathbb{Z}_{p}[G_{K/k}]$-modules for every prime $p$ (the problem of existence of \textit{local} Minkowski units), by using the Krull-Schmidt theorem. This approach has limitations because there are many non-isomorphic indecomposable $\mathbb{Z}_p[G]$-lattices even for finite groups $G$ with simple structures. In this context, recall that two $\mathbb{Z}[G]$-lattices $M$ and $N$ are said to be genus equivalent if we have $\mathbb{Z}_p[G]\otimes_{\mathbb{Z}} M \simeq \mathbb{Z}_p[G]\otimes_{\mathbb{Z}} N$ for every prime $p$.

In the $1980$s, A. Fr$\ddot{\textrm{o}}$hlich and his students began to develop the theory of \textit{factor equivalence} between integral lattices. This approach is computationally more effective because instead of studying each lattice, it focuses on comparing two lattices from the start. Although factor equivalence is weaker than genus equivalence, it nevertheless has profound applications in number theory, explaining how arithmetic influences the Galois module structure of the unit lattice for general Galois groups. Moreover, it was fruitful in yielding a necessary and sufficient condition for the existence of local Minkowski units, valid for all abelian extensions \cite{Burns2}.

In the 2010s, A. Bartel revisited the connection between the integral Galois module structure of unit lattices and the arithmetic of number fields. In \cite{Bartel12}, Bartel used the \textit{regulator constant} of integral lattices (cf. \cite{dokchitser2009regulator}) to obtain the most general Brauer-Kuroda type formula for dihedral extensions of number fields of degree $2p$ for an odd prime $p$. The regulator constant establishes a relationship between the Galois module structure of the unit lattice of a number field and a quotient of Dirichlet regulators of its subfields. In fact, the factor equivalence and the regulator constants are closely related. This connection is made precise by a theorem of Bartel \cite[Cor. 2.12.]{Bartel14}, which states that for a finite group $G$, two $\mathbb{Z}[G]$ lattices $\mathcal{M}$ and $\mathcal{N}$ with isomorphic self-dual rational representations are factor equivalent if and only if their regulator constants coincide for all $G$-relations (for the precise definition of $G$-relations, see Definition \ref{G-relation} and Example \ref{exam-G-relation}).\\

 In the first part of this paper, we establish a formula for the regulator constants of $\mathcal{A}_{G}=\mathbb{Z}[G]/(s_G)$ (Proposition \ref{Regulator constant C(Z[G]/G)}), which yields the following theorem.

\begin{thmx}\label{maintheorem}
Let $K$ be a number field that is Galois over an admissible field $k$ with Galois group $G_{K/k}$. Assume that $K/k$ is unramified at the infinite places of $k$. For each subgroup $H$ of $G_{K/k}$, let $h_{K^{H}}$ (resp. $w_{K^{H}}$) denote the class number (resp. the number of roots of unity) of the fixed field $K^{H}$. Define $\lambda(H)$ to be the order of the kernel of the map $$H^{1}(H,\mu(K)) \to H^{1}(H,\mathcal{O}_{K}^{\times})$$ induced by the embedding $\mu(K) \hookrightarrow \mathcal{O}_{K}^{\times}$. Then, $E_K$ is factor equivalent to $\mathcal{A}_{G_{K/k}}$ if and only if the equality
\begin{equation*}
\underset{H \leq G_{K/k}}{\prod} \bigg ( \frac{|H| \cdot h_{K^{H}} \cdot \lambda(H)}{w_{K^{H}}} \bigg)^{n_H} = 1
\end{equation*}
holds for every $G$-relation $\sum_{H \leq G_{K/k}} n_H H$ of $G_{K/k}$.
\end{thmx}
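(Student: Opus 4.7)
The plan is to reduce the statement to Bartel's regulator-constant criterion \cite[Cor.~2.12.]{Bartel14} recalled in the introduction: two $\mathbb{Z}[G]$-lattices with isomorphic rational representations are factor equivalent if and only if their regulator constants agree on every $G$-relation. Its hypothesis is satisfied here because $k$ is admissible and $K/k$ is unramified at the infinite places of $k$, which forces $\mathbb{Q}\otimes_{\mathbb{Z}} E_K \simeq A_{G_{K/k}} \simeq \mathbb{Q}\otimes_{\mathbb{Z}}\mathcal{A}_{G_{K/k}}$ as $\mathbb{Q}[G_{K/k}]$-modules via Dirichlet's unit theorem. Hence the theorem becomes the claim that, for every $G_{K/k}$-relation $\Theta=\sum_H n_H H$, the equality $C_\Theta(E_K)=C_\Theta(\mathcal{A}_{G_{K/k}})$ holds precisely when the displayed product identity holds.

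The core of the argument is then the separate computation of the two regulator constants. For $C_\Theta(E_K)$, I would equip $\mathbb{R}\otimes_{\mathbb{Z}} E_K$ with the natural $G_{K/k}$-invariant inner product coming from the logarithmic Dirichlet embedding. With this choice, $\det(\langle\cdot,\cdot\rangle\mid E_{K^H})$ differs from the square of the Dirichlet regulator $R_{K^H}$ only by a universal archimedean normalization. Two further arithmetic inputs feed into the computation. First, the long exact sequence of $H$-cohomology attached to $1\to\mu(K)\to\mathcal{O}_K^\times\to E_K\to 1$ identifies $E_K^H/E_{K^H}$ with the kernel of $H^1(H,\mu(K))\to H^1(H,\mathcal{O}_K^\times)$, so that $[E_K^H:E_{K^H}]=\lambda(H)$ by definition. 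Second, the analytic class number formula together with the fact that $\sum_H n_H\,\mathrm{Ind}_H^{G_{K/k}}\mathbf{1}=0$ (as $\Theta$ is a $G$-relation) allows me to rewrite $\prod_H R_{K^H}^{n_H}$ in terms of $\prod_H (w_{K^H}/h_{K^H})^{n_H}$, since the discriminant, $\pi$-power and signature contributions attached to each leading residue cancel in the weighted product.

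For $C_\Theta(\mathcal{A}_{G_{K/k}})$, I invoke the explicit formula of Proposition \ref{Regulator constant C(Z[G]/G)}, which expresses it purely in terms of the orders $|H|$ and the integers $n_H$. Equating the two sides, the equation $C_\Theta(E_K)=C_\Theta(\mathcal{A}_{G_{K/k}})$ rearranges precisely to
\[
\prod_{H\leq G_{K/k}}\left(\frac{|H|\,h_{K^H}\,\lambda(H)}{w_{K^H}}\right)^{n_H}=1,
\]
so Bartel's criterion yields the theorem.

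The main obstacle will be the bookkeeping inside the computation of $C_\Theta(E_K)$: one has to track carefully the powers of $|H|$ and $[G:H]$ introduced by restricting the inner product to $E_K^H$, the powers of $2\pi$ and the real-versus-complex signature contributions from the analytic class number formula, and the discriminant factors, and then verify that everything not captured by $h_{K^H}$, $w_{K^H}$, $\lambda(H)$, and $|H|$ cancels across the weighted product thanks to $\Theta$ being a $G$-relation (in particular thanks to $\sum_H n_H[G:H]=0$ and to the conductor-discriminant formula). The hypotheses that $k$ is admissible and that $K/k$ is unramified at infinite places are precisely what make the archimedean bookkeeping uniform across subgroups $H$ and thus allow these cancellations to proceed cleanly.
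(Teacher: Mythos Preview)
Your proposal is correct and follows essentially the same route as the paper: invoke Bartel's criterion, compute $\mathcal{C}_\Theta(\mathcal{A}_{G_{K/k}})$ via Proposition~\ref{Regulator constant C(Z[G]/G)}, express $\mathcal{C}_\Theta(E_K)$ in terms of the $R_{K^H}$ and $\lambda(H)$, and then trade the regulators for $h_{K^H}/w_{K^H}$ via the analytic class number formula across the relation. The paper sidesteps the archimedean bookkeeping you worry about by quoting Bartel's formula $\mathcal{C}_\Theta(E_K)=\mathcal{C}_\Theta(\mathbb{Z})\cdot\prod_H (R_{K^H}/\lambda(H))^{2n_H}$ (\cite[Prop.~2.15]{Bartel12}, stated as Theorem~\ref{Bartel}) as a black box and by using Brauer--Kuroda in its $s=0$ form $\prod_H (h_{K^H}R_{K^H}/w_{K^H})^{n_H}=1$, so that discriminants, powers of $\pi$, and signature factors never enter; after substituting and using $\mathcal{C}_\Theta(\mathbb{Z})=\prod_H|H|^{-n_H}$, one takes the positive square root of the resulting squared identity.
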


Theorem \ref{maintheorem} generalizes the previously known necessary conditions on quotients of class numbers of subfields for the existence of Minkowski units, extending them to all non-cyclic finite groups.

\vskip 13pt
\begin{center}
 II. Arithmetic Part (\S \ref{sec-p-rational} -- \S \ref{relGalmod})
\end{center}
\vskip 5pt

Even for number fields of small degree, it is very difficult to obtain detailed information on unit lattices, since the relevant arithmetic invariants are challenging to analyze. In \cite{Burns2}, Burns exploited strong arithmetic properties of $p$-power genus field extensions to study the existence of local Minkowski units in abelian $p$-extensions of admissible fields. In the second part of this work, we investigate the Galois module structure of the unit lattices for a new family of number fields, namely totally real \textit{$p$-rational} number fields.

Let $p$ be an odd prime. A number field $F$ is called $p$-rational if the Galois group of the maximal pro-$p$ extension $F_{S_p}$ of $F$ that is unramified outside the set $S_p$ of $p$-adic primes is a free pro-$p$ group (cf. \cite{Mova-Do,Greenberg}). Totally real $p$-rational number fields form an interesting class to work with. Their $p$-class numbers can be computed rather explicitly, a feature that is not often available in general and which makes it possible to carry out our theory in practice. Moreover, they give rise to infinitely many infinite non-abelian pro-$p$ towers, which not only allow us to study non-abelian cases but also provide a wealth of examples to which our results apply. In \S \ref{Factor equivalence class of the unit groups of $p$-rational}, we establish the following theorem on the non-existence of Minkowski units.

\begin{thmx}\label{maintheorem2}
Let $p$ be an odd prime. If $F$ is a non-abelian $p$-rational $p$-extension of $\mathbb{Q}$, then $F$ does not have a local Minkowski unit.
\end{thmx}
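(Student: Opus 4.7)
The plan is to combine the $p$-local refinement of Theorem \ref{maintheorem} with the strong arithmetic of totally real $p$-rational fields, and then to exhibit an explicit $G$-relation that blocks factor equivalence whenever $G:=\mathrm{Gal}(F/\mathbb{Q})$ is non-abelian.

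First I would suppose for contradiction that $F$ admits a local Minkowski unit at $p$, so that $\mathbb{Z}_p\otimes_{\mathbb{Z}}E_F\simeq \mathbb{Z}_p\otimes_{\mathbb{Z}}\mathcal{A}_{G}$ as $\mathbb{Z}_p[G]$-modules. Isomorphism of lattices implies factor equivalence, and a $p$-local version of Theorem \ref{maintheorem} (obtained by applying $v_p$ to the product formula through the regulator-constant characterisation of factor equivalence from \cite{Bartel14}) then forces
\begin{equation*}
\sum_{H\leq G}n_H\, v_p\!\left(\frac{|H|\,h_{F^{H}}\,\lambda(H)}{w_{F^{H}}}\right)=0
\end{equation*}
for every $G$-relation $\sum_H n_H H$.

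Next I would simplify each factor. Because $p$ is odd and $[F:\mathbb{Q}]$ is a power of $p$, complex conjugation acts trivially on $F$, so $F$ and every subfield $F^{H}$ are totally real and $w_{F^{H}}=2$. Since $\mu(F)=\{\pm1\}$ has order prime to $p$ and $H$ is a $p$-group, $H^{1}(H,\mu(F))=0$ and hence $\lambda(H)=1$. Using the heredity of $p$-rationality in $p$-extensions of $\mathbb{Q}$ developed in \S\ref{sec-p-rational}, each $F^{H}$ is itself totally real and $p$-rational; its maximal pro-$p$ extension unramified outside $p$ is the cyclotomic $\mathbb{Z}_p$-extension (ramified at $p$), so the $p$-Hilbert class field of $F^{H}$ coincides with $F^{H}$ and $p\nmid h_{F^{H}}$. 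The constraint therefore collapses to the purely group-theoretic identity
\begin{equation*}
\sum_{H\leq G}n_H\, v_p(|H|)=0\quad\text{for every }G\text{-relation.}
\end{equation*}

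To produce the contradiction I would exploit the non-abelianness of $G$: as $G^{\mathrm{ab}}$ must be non-cyclic, there is a normal subgroup $N\triangleleft G$ with $G/N\cong(\mathbb{Z}/p)^{2}$. The standard $(\mathbb{Z}/p)^{2}$-relation
\begin{equation*}
[(G/N)/1]-\sum_{\overline{H}}[(G/N)/\overline{H}]+p\,[(G/N)/(G/N)]=0,
\end{equation*}
where $\overline{H}$ runs over the $p+1$ subgroups of order $p$ in $G/N$, inflates to a $G$-relation supported on $N$, on the $p+1$ subgroups $M$ with $N<M<G$ and $[G:M]=p$, and on $G$. Writing $|G|=p^{n}$, a direct computation gives $\sum_{H}n_H v_p(|H|)=(n-2)-(p+1)(n-1)+pn=p-1\neq 0$, contradicting the previous display. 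The main obstacle I anticipate is the arithmetic input of the second paragraph: proving that every subfield $F^{H}$ of a $p$-rational $p$-extension of $\mathbb{Q}$ is again $p$-rational, and hence has $p$-class number coprime to $p$; this is exactly what the $p$-rationality machinery of \S\ref{sec-p-rational} should supply.
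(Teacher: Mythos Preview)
There is a genuine gap: your claim that every subfield $F^{H}$ has $p$-class number $1$ is false, and this is precisely where the difficulty of the theorem lies. A totally real $p$-rational field $L$ satisfies $H_L\subseteq L_\infty$, but nothing forces $L_\infty/L$ to be ramified at $p$ \emph{from the first layer}; by Proposition \ref{pclassnumber} one has $h_{L,p}=p^m$ with $m$ the number of unramified layers, and $m>0$ does occur. Concretely, take the very relation you use: with $N\trianglelefteq G$ and $G/N\cong(\mathbb{Z}/p)^2$, the field $F^N=\mathbb{Q}_{p,q}^{\el}$ contains $\mathbb{Q}_1$, the degree-$p$ subfield $\mathbb{K}\subset\mathbb{Q}(\zeta_q)$, and $p-1$ further degree-$p$ fields $N'$ ramified at both $p$ and $q$. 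For each such $N'$ one has $F^N=N'\mathbb{Q}_1=N'\mathbb{K}$, so $F^N/N'$ is everywhere unramified and hence $h_{N',p}=p$ (cf.\ the proof of Lemma \ref{lemma-application of Bouc}(iii)). Plugging these into the $p$-local form of Theorem \ref{maintheorem} gives
\[
\sum_H n_H\bigl(v_p(|H|)+v_p(h_{F^{H}})\bigr)=(p-1)+\bigl(0-(p-1)\cdot 1+0\bigr)=0,
\]
so the condition is \emph{satisfied} for this relation and no contradiction arises.

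The paper's proof reflects exactly this obstruction. It first disposes of the case where $p$ splits in $\mathbb{Q}_{\{q\}}$ by descending to an abelian subfield and invoking Burns's Theorem \ref{genus equiv for genus extensions}. In the remaining case it does \emph{not} use the bottom $(\mathbb{Z}/p)^2$-quotient at all; instead it locates a specific $(\mathbb{Z}/p)^2$-subquotient $W/W'$ with $W'$ sitting just below the maximal abelian subfield $F^{\ab}$, and then uses Proposition \ref{cyclic pq ramified} together with the inclusion $\mathbb{Q}_{\{q\}}\subseteq H_{W'}$ to force the situation into case (ii) or the failing subcase of (iv) of Lemma \ref{lemma-application of Bouc}. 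The non-abelianness of $G$ enters not merely through the existence of a $(\mathbb{Z}/p)^2$-quotient, but through the fact that $q$ must ramify again above $F^{\ab}$, which is what makes the class-number pattern at $W/W'$ differ from the benign pattern you encountered at the bottom.
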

Theorem \ref{maintheorem2} provides an infinite family of non-abelian number fields without Minkowski units. This result may be viewed as a non-abelian extension of Burns's theorem on the existence of local Minkowski units in $p$-power genus field extensions of admissible number fields (cf. \cite[Theorem 5]{Burns2}).

We also study in \S \ref{relGalmod} the relative Galois module structure of unit lattices for \textit{varying} Galois extensions of totally real $p$-rational number fields with Galois group isomorphic to a \textit{fixed} finite group $G$. Note that in this setting the base field is not required to be admissible. Since the $\mathbb{Z}$-rank of the unit lattice is unbounded as the extension varies, infinitely many non-isomorphic indecomposable $\mathbb{Z}_p[G]$-lattices can occur in the Krull-Schmidt decomposition of its $p$-adic completion.

In \cite{Burns4}, Burns established that for every finite group $G$ and a finite set $S$ of primes of $\mathbb{Z}$ containing $p$, there exists a natural infinite family of relative Galois extensions $\mathscr{L}/\mathscr{K}$ with $\mathrm{Gal}(\mathscr{L}/\mathscr{K}) \simeq G$ in which the sum of the $\mathbb{Z}_{p}$-ranks of the non-projective indecomposable components in a Krull-Schmidt decomposition of $\mathbb{Z}_p \otimes_{\mathbb{Z}} E_{\mathscr{L},S}$ (as a $\mathbb{Z}_p[G]$-lattice) is uniformly bounded. Here, $E_{\mathscr{L},S}$ denotes the quotient of $S$-unit group $\mathcal{O}_{\mathscr{L},S}^{\times}$ of $\mathscr{L}$ by $\mu(\mathscr{L})$. It then follows from the Jordan-Zassenhaus theorem that only finitely many non-isomorphic indecomposable $\mathbb{Z}_p[G]$-lattices appear in the Krull-Schmidt decomposition of $\mathbb{Z}_p \otimes_{\mathbb{Z}} E_{\mathscr{L},S}$ for such extensions $\mathscr{L}/\mathscr{K}$ belonging to this family.

In \S \ref{relGalmod}, we observe a similar phenomenon in the relative Galois module structure of the group of \textit{ordinary} units when the number fields are totally real and $p$-rational.

\begin{thmx}\label{maintheorem3}
Let $G$ be a finite group. Then, there exists a finite set $\Omega$ of $\mathbb{Z}_p[G]$-lattices such that for every relative Galois extension $\mathscr{L}/\mathscr{K}$ of totally real $p$-rational number fields with $\mathrm{Gal}(\mathscr{L}/\mathscr{K}) \simeq G$, there exists $X \in \Omega$ and a non-negative integer $m$ such that $\mathbb{Z}_p \otimes_{\mathbb{Z}} E_{\mathscr{L}}$ is factor equivalent to $X \oplus \mathbb{Z}_{p}[G]^{m}$ as $\mathbb{Z}_{p}[G]$-lattices.
\end{thmx}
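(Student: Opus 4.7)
The plan is to combine the Dirichlet--Herbrand description of $\mathbb{Q}_p\otimes E_{\mathcal{F}}$, the $p$-adic analogue of Theorem~\ref{maintheorem}, and Bartel's regulator constant criterion for factor equivalence to reduce the theorem to a uniform $p$-adic bound on class-number type invariants, which is then supplied by the arithmetic of $p$-rational totally real fields.

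First, I would identify the rational type. Because $F$ and $\mathcal{F}$ are totally real and $G=\mathrm{Gal}(\mathcal{F}/F)$ acts with trivial stabiliser on the real places above each real place of $F$, Dirichlet--Herbrand gives
\begin{equation*}
\mathbb{Q}_p\otimes_\mathbb{Z} E_{\mathcal{F}} \;\simeq\; \mathbb{Q}_p[G]^{\,[F:\mathbb{Q}]-1}\oplus \mathbb{Q}_p[G]/(\widetilde{G})
\end{equation*}
as $\mathbb{Q}_p[G]$-modules. Setting $m=[F:\mathbb{Q}]-1$, any factor-equivalent representative of the form $X\oplus\mathbb{Z}_p[G]^m$ must have $\mathbb{Q}_p\otimes X\simeq\mathbb{Q}_p[G]/(\widetilde{G})$, so $X$ has $\mathbb{Z}_p$-rank $|G|-1$ regardless of $\mathcal{F}$. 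Jordan--Zassenhaus then yields finitely many isomorphism classes (and a fortiori finitely many factor-equivalence classes) of such $X$'s, which will be our candidate set $\Omega$.

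Next I would produce the splitting up to factor equivalence. By \cite[Cor.~2.12]{Bartel14}, factor equivalence of $\mathbb{Z}_p[G]$-lattices with a common $\mathbb{Q}_p[G]$-rationalisation is detected by their regulator constants on all $G$-relations. Regulator constants are multiplicative in direct sums and trivial on the free factor $\mathbb{Z}_p[G]$, so to compare $M_\mathcal{F}:=\mathbb{Z}_p\otimes E_\mathcal{F}$ with a candidate $\mathbb{Z}_p[G]^m\oplus X$ it suffices for the regulator constants of $M_\mathcal{F}$ to match those of some $X$ with the required rationalisation. By the $p$-adic analogue of Theorem~\ref{maintheorem}, the regulator constant of $M_\mathcal{F}$ on a $G$-relation $\sum_H n_H H$ is the $p$-part of
\begin{equation*}
\prod_{H\leq G}\Bigl(|H|\,h_{\mathcal{F}^H}\,\lambda(H)/w_{\mathcal{F}^H}\Bigr)^{n_H},
\end{equation*}
and the $p$-rationality hypothesis, together with the arithmetic lemmas of \S\ref{sec-p-rational}, should bound each of the $p$-adic valuations involved uniformly in $\mathcal{F}$. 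The set of regulator constants taken by the various $M_\mathcal{F}$ is therefore finite, and for each such value a model $X\in\Omega$ realising it is produced by enumerating the finitely many $\mathbb{Z}_p[G]$-lattices of rank $|G|-1$ with rationalisation $\mathbb{Q}_p[G]/(\widetilde{G})$ and extracting those whose regulator constants agree with the prescribed value.

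The chief obstacle is the uniform $p$-adic bound in the preceding step: although the endpoints $F$ and $\mathcal{F}$ of the tower are $p$-rational, the intermediate fields $\mathcal{F}^H$ that appear in the regulator-constant formula are totally real but need not be $p$-rational themselves, so the estimates on $p$-parts of class numbers and of the cohomological correction terms $\lambda(H)$ cannot be invoked field-by-field. One must therefore either propagate $p$-rationality from the endpoints through all intermediate $\mathcal{F}^H$, or bypass this by transferring the relevant estimates from $\mathcal{F}$ down to $\mathcal{F}^H$ by restriction--corestriction arguments in Galois cohomology; this is where the substantive arithmetic work of the proof is concentrated.
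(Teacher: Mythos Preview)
Your overall strategy---bound the $p$-adic valuations of the regulator constants uniformly over all $\mathcal{F}/F$ and deduce finiteness of the relevant factor-equivalence classes---is exactly the paper's, but two points need correction.

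First, the ``chief obstacle'' you identify is not an obstacle at all: $p$-rationality \emph{descends} to subfields (Proposition~\ref{descending}), so every intermediate field $\mathcal{F}^H$ is automatically totally real and $p$-rational. The uniform bound then follows in two lines from Lemma~\ref{p-class number cyclic}(i), which for the $p$-rational extension $\mathcal{F}/\mathcal{F}^H$ gives $|v_p(h_{\mathcal{F}^H,p})-v_p(h_{\mathcal{F},p})|\le \log_p|H|$; since $\sum_H n_H=0$ (Lemma~\ref{constant}) the common term $v_p(h_{\mathcal{F},p})$ cancels in $\sum_H n_H\,v_p(h_{\mathcal{F}^H,p})$, leaving a quantity bounded in terms of $G$ and $\Theta$ alone. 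The terms $w_{\mathcal{F}^H}$ and $\lambda(H)$ have trivial $p$-part because $p$ is odd and the fields are totally real. No restriction--corestriction argument is needed.

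Second, your construction of $\Omega$ via Jordan--Zassenhaus has a genuine gap. You take $\Omega$ to be the finitely many $\mathbb{Z}_p[G]$-lattices with rationalisation $\mathbb{Q}_p[G]/(\widetilde{G})$ and then propose to ``extract those whose regulator constants agree with the prescribed value.'' But you have not shown that such an $X$ \emph{exists}: given the regulator-constant tuple of some $\mathbb{Z}_p\otimes E_{\mathcal{F}}$ (a lattice with rationalisation $\mathbb{Q}_p[G]/(\widetilde{G})\oplus\mathbb{Q}_p[G]^{[F:\mathbb{Q}]-1}$), it is not automatic that the same tuple is realised by a lattice with rationalisation $\mathbb{Q}_p[G]/(\widetilde{G})$ alone. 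The paper sidesteps this existence issue by building $\Omega$ out of actual unit lattices: for each of the finitely many regulator-constant tuples that arise, it selects a Galois extension $L_x/F_x$ of totally real $p$-rational fields with $G_{L_x/F_x}\simeq G$ and \emph{minimal} $[F_x:\mathbb{Q}]$ realising that tuple, and sets $\Omega=\{\mathbb{Z}_p\otimes E_{L_x}\}_x$. For an arbitrary $\mathcal{F}/F$ one then takes $X=\mathbb{Z}_p\otimes E_{L_x}$ for the matching $x$ and $m=[F:\mathbb{Q}]-[F_x:\mathbb{Q}]\ge 0$; the rationalisations agree by Dirichlet--Herbrand and the regulator constants agree by construction, so Proposition~\ref{factor-regulatorconstant ppart} gives the factor equivalence. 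In particular the elements of $\Omega$ need not have rank $|G|-1$, and Jordan--Zassenhaus is not used.
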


The arithmetic of a totally real $p$-rational number field becomes particularly simple when there is a unique $p$-adic prime (cf. \S \ref{uniquepprime}). In this case, Theorem \ref{maintheorem3} can be sharpened. We now make this setting more precise.

Let $p$ be an odd prime, and let $F$ be a totally real $p$-rational number field. 
Fix a finite non-$p$-adic prime $\mathfrak{q}$ of $F$ that does not split in the cyclotomic $\mathbb{Z}_p$-extension $F_{\infty}/F$. 
Denote by $F_{S_p \cup \{\mathfrak{q}\}}$ (resp. $F_{\{\mathfrak{q}\}}$) the maximal pro-$p$ extension of $F$ unramified outside $S_p \cup \{\mathfrak{q}\}$ (resp. unramified outside $\mathfrak{q}$). We remark that $\mathrm{Gal}(F_{S_p \cup \{\mathfrak{q}\}}/F)$ is a Demu\v{s}kin group of rank $2$, while $F_{\{\mathfrak{q}\}}/F$ is finite. For a finite group $G$, we denote by $I_G$ the augmentation ideal of the group ring $\mathbb{Z}[G]$.

\begin{thmx}\label{maintheorem4}
 Suppose that $p$ does not split in $F_{S_p \cup \{\mathfrak{q}\}}$. Then, for every Galois extension $\mathscr{L}/\mathscr{K}$ of number fields satisfying $$F_{\{\mathfrak{q}\}} \subseteq \mathscr{K} \subseteq \mathscr{L} \subset F_{S_p \cup \{\mathfrak{q}\}}$$ with Galois group $G$, the lattice $E_{\mathscr{L}}$ is factor equivalent to  
 $$\mathcal{A}_{G} \oplus I_{G} \oplus \mathbb{Z} \oplus \mathbb{Z}[G]^{[\mathscr{K}:\mathbb{Q}]-2}$$ as $\mathbb{Z}[G]$-lattices.
\end{thmx}

\vskip 15pt
\begin{center}
 \textbf{Notations}
\end{center}

For a finite group $G$, let $s_G := \sum_{g \in G} g$ denote the trace element, $I_G$ the augmentation ideal, and $\mathcal{A}_G := \mathbb{Z}[G]/(s_G)$ the standard cyclic $\mathbb{Z}[G]$-module. We write $G^{\mathrm{ab}}$ for the abelianization of $G$, and $(G:H)$ for the index of $H$ in $G$ for every subgroup $H$ of $G$. For an abelian group $A$, $\mathrm{rk}_p(A)$ denotes its $p$-rank. If $X$ is a $G$-set, $X^G$ is the subset fixed by $G$. For a natural number $n$, $|n|_p$ denotes its $p$-part, and for $x \in \mathbb{Q}^\times$, $v_p(x)$ denotes the $p$-adic valuation. Finally, all modules over a ring $R$ are understood to be left $R$-modules.\\

For an extension $L/K$ of number fields, we write $\Ram(L/K)$ for the set of places of $K$ that ramify in $L$. If $L/K$ is Galois, we denote $G_{L/K} := \mathrm{Gal}(L/K)$. For a number field $F$, we let $h_F$ denote its class number, $w_F$ the number of roots of unity in $F$, and $R_F$ its Dirichlet regulator. If $v$ is a place of $F$, then $F_v$ denotes the completion of $F$ at $v$.\\

In the sections devoted to $p$-rationality, we adopt the following notation. We fix an odd prime $p$ and denote by $\mathfrak{h}_F$ the $p$-class number of a number field $F$. We write $F_{\infty}$ for its cyclotomic $\mathbb{Z}_p$-extension, $F_n$ for the $n$-th layer of $F_{\infty}/F$, and $H_F$ for the Hilbert $p$-class field of $F$. We also let $S_p$ denote the set of $p$-adic places of a number field. Since the base field will always be clear from the context, this notation will cause no ambiguity.

In addition, we introduce the following notation, which will be used frequently in \S \ref{uniquepprime} and \S \ref{Factor equivalence class of the unit groups of $p$-rational}. 
For a finite set $S$ of places of $F$, we write $F_S$ for the maximal pro-$p$ extension of $F$ unramified outside $S$. 
If $\mathfrak{q}$ is a non-$p$-adic prime of $F$, we set:

\begin{tabbing}
		$\mathfrak{q}_L$ \quad\,\,\,\= : the unique prime above $\mathfrak{q}$ in an extension $F\subset L\subset F_{S_p \cup \{\mathfrak{q}\}}$
        (when $F$ is $p$-rational and\\
        \> \; $S_p \cup \{\mathfrak{q}\}$ is primitive for $(F,p)$, see Lemma \ref{nonsplitting});\\
		$L^{\el}_{p,\mathfrak{q}}$ \> : the maximal elementary abelian $p$-extension of $L$ contained in $F_{S_p \cup \{\mathfrak{q}\}}$ (in the same setting as above);\\
        $p_L$ \> : the unique prime of $L$ above $p$ for $F\subset L\subset F_{S_p \cup \{\mathfrak{q}\}}$ (when $p$ does not split in $F_{S_p \cup \{\mathfrak{q}\}}$);\\
        $\mathcal{I}_{L,v}$ \> : the inertia subgroup of $G_{L^{\el}_{p, \mathfrak{q}}/L}$ at $v$ (see the discussion 
preceding Lemma \ref{lemma-application of Bouc}).
\end{tabbing}

\vskip 15pt
\noindent \textbf{Acknowledgements}

\vskip 5pt
\noindent We would like to thank David Burns for his helpful discussion and encouragement, Abbas Movahhedi for pointing out an error on $p$-rationality in the first version of the paper, and Christian Maire for Remark \ref{infinitude pq}. We also would like to thank Jilali Assim, Zouhair Boughadi, Asuka Kumon, El Boukhari Saad, Bouchaïb Sodaïgui, and Youness Mazigh for their interest in this work and helpful comments. The revision of this paper was carried out while D. Lim was a visiting researcher at FEMTO-ST in Besan\c{c}on in 2023 and during his visit to the Institute for Advanced Studies in Mathematics (IASM) in Harbin Institute of Technology in 2025. D. Lim would like to thank both institutions for their friendly environment. Finally, we would like to thank the anonymous referee for his or her patience over a long period and for the sharp and constructive comments, which greatly enhanced the overall quality of the paper. In particular, we are grateful for the valuable mathematical suggestions regarding Section~4.

 \vskip 10pt
\noindent \textbf{Funding}
\vskip 5pt
\noindent 
D. Lim was supported by the Basic Science Research Program through the National Research
Foundation of Korea (NRF) funded by the Ministry of Education (Grant No. NRF-2022R1I1A1A01071431). He was also supported by the Core Research Institute Basic Science Research Program
through the National Research Foundation of Korea(NRF), funded by the Ministry of Education (Grant No. 2019R1A6A1A11051177).

\section{Genus equivalence}\label{sec-genus}

Throughout this section, let \( k \) be an admissible field and \( K/k \) a Galois extension of number fields in which the infinite places of \( k \) are unramified. Since the \( \mathbb{Z}[G_{K/k}] \)-module structure of \( E_K \) is difficult to study (cf. \cite{Brumer, Marko}), it is natural to first examine the \( \mathbb{Z}_p[G_{K/k}] \)-module structure of \( \mathbb{Z}_p \otimes_{\mathbb{Z}} E_K \) for all primes \( p \). For a Dedekind domain \( R \) and a finite group $G$, a finitely generated \( R[G] \)-module that is torsion-free as an \( R \)-module is called an \( R[G] \)-\textit{lattice}.

\begin{defi}
Two $R[G]$-lattices $M$ and $N$ are said to be \textit{genus equivalent} if for every non-zero prime $\mathfrak{p}$ of $R$, we have $$R_{\mathfrak{p}} \otimes_R M \simeq R_{\mathfrak{p}} \otimes_R N$$ as $R_{\mathfrak{p}}[G]$-modules, where $R_{\mathfrak{p}}$ denotes the completion of $R$ at $\mathfrak{p}$.
\end{defi}

If $p$ does not divide $|G_{K/k}|$, then we have $$\mathbb{Z}_p \otimes_{\mathbb{Z}} E_{K}  \simeq \mathbb{Z}_p \otimes_{\mathbb{Z}} \mathcal{A}_{G_{K/k}}$$ by representation theory (cf. \cite[\S 15.5]{serre1977linear}). Therefore, the study of the genus equivalence class of $E_{K}$ concerns only those primes $p$ dividing $|G_{K/k}|$.\\ 
A $\mathbb{Z}_{p}[G]$-lattice is said to be \textit{indecomposable} if it is not a direct sum of two proper $\mathbb{Z}[G]$-sublattices. The Krull-Schmidt theorem is available for $\mathbb{Z}_p[G]$-lattices.

\begin{theo}(cf. \cite[p. 83]{curtis1966representation})
Let $M$ be a $\mathbb{Z}_{p}[G]$-lattice, and suppose that
\begin{equation*}
    M \simeq U_{1} \oplus \cdots \oplus U_{m} \simeq V_{1} \oplus \cdots \oplus V_{n}
\end{equation*}
are two decompositions of $M$ into indecomposable $\mathbb{Z}_{p}[G]$-sublattices. Then, we have $m=n$, and after a suitable reindexing we have $U_{i} \simeq V_{i}$ as $\mathbb{Z}_{p}[G]$-lattices for every $1 \leq i \leq m$.
\end{theo}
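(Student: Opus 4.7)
The plan is to reduce the statement to the Azumaya--Krull--Schmidt theorem, which asserts uniqueness of indecomposable decompositions whenever each indecomposable summand has a local endomorphism ring. Concretely, I will establish that for every indecomposable $\mathbb{Z}_p[G]$-lattice $U$, the ring $\mathrm{End}_{\mathbb{Z}_p[G]}(U)$ is a local ring, and then quote the general theorem.

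First I would observe that since $M$ is finitely generated as a $\mathbb{Z}_p$-module and $\mathbb{Z}_p$ is Noetherian, every submodule of $M$ is finitely generated, so direct-sum decompositions into indecomposables exist (they terminate because the $\mathbb{Z}_p$-rank strictly drops at each nontrivial split). The same holds for any indecomposable $U$ appearing as a summand. Next I would identify $E := \mathrm{End}_{\mathbb{Z}_p[G]}(U)$ as a $\mathbb{Z}_p$-subalgebra of $\mathrm{End}_{\mathbb{Z}_p}(U)$, hence finitely generated as a $\mathbb{Z}_p$-module; in particular $E$ is Noetherian and $p$-adically complete, and the Jacobson radical $J(E)$ satisfies $\bigcap_n J(E)^n = 0$.

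The key step is to upgrade ``no nontrivial idempotents'' to ``local.'' Completeness of $E$ allows idempotents to be lifted along $E \to E/J(E)$: if $\bar e \in E/J(E)$ is idempotent, the standard Hensel-type argument (iteratively correcting a lift $e_0$ via $e_{n+1} = 3 e_n^2 - 2 e_n^3$) produces an idempotent $e \in E$ reducing to $\bar e$, using $p$-adic completeness and the nilpotence of $J(E)/pE \cdot J(E)$ inside the finite-dimensional $\mathbb{F}_p$-algebra $E/pE$. Since $U$ is indecomposable, $E$ contains no idempotents other than $0$ and $1$, so $E/J(E)$ has none either. But $E/J(E)$ is a semisimple Artinian ring (it is a finite-dimensional $\mathbb{F}_p$-algebra modulo its radical), so it is a product of matrix algebras over division rings; the absence of nontrivial idempotents forces it to be a single division ring. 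Therefore $E$ is local.

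Having established that every indecomposable summand has local endomorphism ring, I would invoke Azumaya's theorem: in any two decompositions $M \simeq U_1 \oplus \cdots \oplus U_m \simeq V_1 \oplus \cdots \oplus V_n$ into indecomposables with local endomorphism rings, one has $m = n$ and, after renumbering, $U_i \simeq V_i$. The classical proof proceeds by induction on $m$: writing the projection $M \to U_1 \to M$ along the second decomposition and using locality of $\mathrm{End}(U_1)$, one of the composites $U_1 \hookrightarrow M \twoheadrightarrow V_j$ must be an isomorphism, yielding $M \simeq V_j \oplus \bigoplus_{i\ge 2} U_i$, after which induction finishes. The main obstacle in the whole argument is the locality step, since the completeness of $\mathbb{Z}_p$ is precisely what enables idempotent lifting and distinguishes the $\mathbb{Z}_p[G]$ case from the $\mathbb{Z}[G]$ case (where Krull--Schmidt genuinely fails).
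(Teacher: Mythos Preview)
The paper does not give its own proof of this theorem; it simply quotes it from Curtis--Reiner \cite[p.~83]{curtis1966representation}. Your argument is correct and is precisely the classical route taken in that reference: one shows that the endomorphism ring of any indecomposable $\mathbb{Z}_p[G]$-lattice is local (using that a finite $\mathbb{Z}_p$-algebra is complete semilocal, so idempotents lift modulo the radical), and then applies the Azumaya--Krull--Schmidt theorem. The only cosmetic point is that the phrase ``nilpotence of $J(E)/pE \cdot J(E)$'' is garbled; what you need (and clearly intend) is that $p \in J(E)$ and that $J(E/pE)$ is nilpotent in the Artinian ring $E/pE$, so that the $p$-adic and $J(E)$-adic topologies on $E$ coincide and the iterative idempotent lift converges.
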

Therefore, if we classify the indecomposable $\mathbb{Z}_{p}[G_{K/k}]$-lattices of $\mathbb{Z}_p$-rank at most $|G_{K/k}|$, then \textit{in principle}, we can study the $\mathbb{Z}_{p}[G_{K/k}]$-module structure of $\mathbb{Z}_p \otimes_{\mathbb{Z}} E_{K}$ by computing the multiplicity of each indecomposable $\mathbb{Z}_{p}[G_{K/k}]$-lattice in the Krull-Schmidt decomposition of $\mathbb{Z}_p \otimes_{\mathbb{Z}} E_K$.

\begin{exam}\label{latticeexample}
\begin{enumerate}
    \item[(i)] If $G$ is cyclic of order $p$, then by a theorem of Diederichsen, there are precisely three isomorphism classes of indecomposable $\mathbb{Z}_{p}[G]$-lattices \cite{Diederichsen}. From this, we can easily check the genus equivalence of $E_{K}$ and $\mathcal{A}_{G_{K/k}}$ for every cyclic extension $K/k$ of prime degree.
    
    \item[(ii)] When $G$ is the elementary abelian $p$-group $(\mathbb{Z}/p\mathbb{Z})^2$ of rank $2$, Payan, Bouvier, and Duval classified indecomposable $\mathbb{Z}_{p}[G]$-lattices that can be realized as $\mathbb{Z}_{p}[G]$-sublattices of $\mathbb{Z}_p \otimes_{\mathbb{Z}} E_{K}$ for some Galois extensions $K/k$ with Galois group $G_{K/k}\simeq G$ (cf. \cite{BouvierPayan, Duval2, duval1981structure}). In \cite{duval1981structure}, Duval used these results to study the genus equivalence of $E_{K}$ and $\mathcal{A}_{G_{K/k}}$ in the case $G_{K/k}\simeq (\mathbb{Z}/p\mathbb{Z})^2$. 
    
    \item[(iii)] Assume that $G_{K/k}$ is a metacyclic group of the form $\mathbb{Z}/p\mathbb{Z} \rtimes T$, where $T$ is a cyclic group whose order divides $p-1$. Marszalek obtained necessary and sufficient conditions for the genus equivalence of $E_{K}$ and $\mathcal{A}_{G_{K/k}}$ for the case when the action of $T$ on $\mathbb{Z}/p\mathbb{Z}$ is faithful by using the classification of integral representations \cite{Mars1,Mars2}. The interested readers can also refer to \cite{Jaulent1,Jaulent2}.

\end{enumerate}
\end{exam}

The study of the genus equivalence class of $E_{K}$ has not been extended to more general Galois groups, because the classification of the indecomposable integral representations is highly complicated, even for groups with simple structures.

\begin{rema}
\begin{enumerate}
    \item[(i)] In \cite{HelRei1, HelRei2}, Heller and Reiner studied the indecomposable $\mathbb{Z}_{p}[\mathbb{Z}/p^{e}\mathbb{Z}]$-lattices for $e \geq 2$. In \cite{HelRei1}, they proved that there are precisely $4p+1$ non-isomorphic indecomposable representations of $\mathbb{Z}/p^{2}\mathbb{Z}$ over $\mathbb{Z}_{p}$. However, there are infinitely many non-isomorphic indecomposable representations over $\mathbb{Z}_{p}$ if $e$ is larger than $2$ \cite{HelRei2}. There has been no attempt to classify indecomposable $\mathbb{Z}_{p}[\mathbb{Z}/p^{3}\mathbb{Z}]$-lattices  even for small $\mathbb{Z}_{p}$-ranks.
    
    \item[(ii)] Duval obtained a necessary and sufficient condition for the genus equivalence between $E_{K}$ and $\mathcal{A}_{G_{K/k}}$ only in the cases $p=2$ or $3$ for $G_{K/k} \simeq (\mathbb{Z}/p\mathbb{Z})^2$, as the classification was too difficult and incomplete for larger $p$. For an illustration of the difficulty of classifying integral representations of $(\mathbb{Z}/p\mathbb{Z})^2$ over $\mathbb{Z}_{p}$, see the table on page 241 of \cite{duval1981structure}, which is far from exhaustive for general $p$.
\end{enumerate}

\end{rema}

\section{Factor equivalence, theorems of Burns and regulator constant}\label{SecFactor}

In this section, we explain basic concepts in the theory of factor equivalence and its application to the Galois module structure of unit lattices. We then present some basic results on the regulator constants. Lastly, we explain the connection between factor equivalence and regulator constants.

\subsection{Factor equivalence of integral lattices}\label{Sec3.1}

Throughout this section, let $K$ be a finite extension of $\mathbb{Q}$ or $\mathbb{Q}_{p}$. We write $\mathcal{O}_K$ for its ring of integers and $\mathrm{Id}_K$ for its group of fractional ideals. For a finite group $G$, denote by $\mathscr{S}(G)$ the set of subgroups of $G$.

As in the theory of genus equivalence, the theory of factor equivalence \cite{Nelson} compares two $\mathcal{O}_{K}[G]$-lattices $\mathcal{M}$ and $\mathcal{N}$ such that we have $K \otimes_{\mathcal{O}_K} \mathcal{M} \simeq K \otimes_{\mathcal{O}_K} \mathcal{N}$ as $K[G]$-modules. From such a  $K[G]$-isomorphism, one obtains an injective $\mathcal{O}_{K}[G]$-module homomorphism $\imath : \mathcal{M} \to \mathcal{N}$. The relation of factor equivalence of $\mathcal{M}$ and $\mathcal{N}$ is defined in terms of the \textit{factorisability} of a natural function associated with $\imath$. We therefore begin by introducing the notion of factorisability of general functions from $\mathscr{S}(G)$ to an abelian group $X$.

\begin{defi} (cf. \cite{Bartel14, cassou1989functions, deSmit})
Let $G$ be a finite group and $X$ be an abelian group written multiplicatively. A function $f \colon \mathscr{S}(G) \to X$ is said to be factorisable if there exists an injection of abelian groups $\psi : X \hookrightarrow Y$ for some $Y$ and a function $g \colon \mathrm{Irr}(G) \to Y$ defined on a full set $\mathrm{Irr}(G)$ of isomorphism classes of irreducible complex characters of $G$ such that
\begin{equation*}
    \psi(f(H)) = \underset{\chi \in \mathrm{Irr}(G)}{\prod} g(\chi)^{\langle \chi, \mathbb{C}[G/H] \rangle}
\end{equation*}
holds for all $H \in \mathscr{S}(G)$, where $\langle \chi, \mathbb{C}[G/H] \rangle$ denotes the multiplicity of $\chi$ in the representation $\mathbb{C}[G/H]$.
\end{defi}

In \cite{Burns1, Burns2, Frolich4}, Fr$\ddot{\textrm{o}}$hlich and Burns focused on the case when $G$ is abelian. In this situation, the factorisability of a function $f : \mathscr{S}(G) \longrightarrow X$ can be studied via its \textit{factorisable quotient} $\tilde{f}$. For a finite abelian group $G$, define a binary relation on $G$ by declaring $x,y \in G$ to be related if and only if they generate the same cyclic subgroup of $G$. This defines an equivalence relation, and we call each equivalence class $D$ a \textit{division} of $G$. Given a function $f : \mathscr{S}(G) \longrightarrow X$, we associate a function $f'$ on the set of divisions of $G$ with values in $X$ by
\begin{equation*}
    f'(D) : = \underset{C < \overline{D}}{\prod}f(C)^{\mu((\overline{D}:C))},
\end{equation*}
where $\overline{D}$ denotes the subgroup of $G$ generated by any element $x \in D$, and $\mu$ is the M$\ddot{\textrm{o}}$bius function. We define the factorisable quotient $\tilde{f} : \mathscr{S}(G) \longrightarrow X$ of $f$ by
\begin{equation*}
    \tilde{f}(H) : = \bigg ( \underset{ D \subset H}{\prod}f'(D) \bigg ) \cdot f(H)^{-1}
\end{equation*}
for every $H \in \mathscr{S}(G)$.
It is known that for general $f$, one always has $\tilde{f}(H)=1$ for all cyclic subgroups $H$ of $G$. We have the following proposition.
\begin{prop}\label{factorequivalenceabelian} (\cite{Frolich1, Frolich4})
Let $G$ be a finite abelian group and $f \colon \mathscr{S}(G) \to X$ a function from $\mathscr{S}(G)$ to an abelian group $X$. Then $f$ is factorisable if and only if we have $\tilde{f}(H)=1$ for all subgroups $H$ of $G$.
\end{prop}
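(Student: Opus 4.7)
The key reformulation is that, because $G$ is abelian, every $\chi \in \mathrm{Irr}(G)$ is one-dimensional, and $\langle \chi, \mathbb{C}[G/H] \rangle$ equals the indicator $\mathbf{1}_{H \leq \ker \chi}$. Hence factorisability of $f$ is equivalent to the existence of $\psi : X \hookrightarrow Y$ and $g : \widehat{G} \to Y$ with $\psi(f(H)) = \prod_{\chi \in H^\perp} g(\chi)$ for every $H \in S(G)$, where $H^\perp := \widehat{G/H} \subseteq \widehat{G}$. The plan is to match this character-theoretic statement with the condition $\tilde{f}(H) = 1$ for all $H$ -- which, after unwinding, reads $f(H) = \prod_{D \subset H} f'(D)$ -- by classical M\"{o}bius inversion.

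For the forward direction, I would substitute the character-theoretic formula into the definition of $\tilde{f}(H)$ and track, for each fixed $\chi \in \widehat{G}$, the total exponent of $g(\chi)$ in $\psi(\tilde{f}(H))$. From $\prod_{D \subset H} f'(D)$ this exponent equals $\sum_{\mathrm{cyclic}\ \overline{D} \leq H} \sum_{C \leq \overline{D} \cap \ker \chi} \mu((\overline{D} : C))$; applying classical M\"{o}bius inversion on the divisor lattice of each $|\overline{D}|$ and then summing over $\overline{D}$ via a second inversion on the cyclic subgroup lattice of $H$ produces exactly $\mathbf{1}_{H \leq \ker \chi}$, which cancels the $-\mathbf{1}_{H \leq \ker \chi}$ arising from $\psi(f(H))^{-1}$. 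Thus $\psi(\tilde{f}(H)) = 1$ in $Y$, and injectivity of $\psi$ yields $\tilde{f}(H) = 1$ in $X$.

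For the converse, I would first enlarge $X$ to a divisible abelian group $Y$ (so arbitrary roots can be extracted) and embed $\psi : X \hookrightarrow Y$. Then, after fixing an isomorphism $G \simeq \widehat{G}$, I obtain a bijection between cyclic subgroups of $G$ (i.e.\ divisions of $G$) and cyclic subgroups of $\widehat{G}$, which via $\perp$ correspond to the cocyclic subgroups $N \leq G$ (those with $G/N$ cyclic). For each such $N$ I would distribute a chosen $\phi((G:N))$-th root of $\psi(f'(D))$, for the division $D$ matched to $N$, among the $\phi((G:N))$ characters with $\ker \chi = N$, thereby defining $g(\chi)$. The hypothesis $\tilde{f}(H) = 1$, namely $f(H) = \prod_{C \leq H\ \mathrm{cyclic}} f'(D_C)$, then translates via this duality into $\psi(f(H)) = \prod_{N \supseteq H,\, G/N\ \mathrm{cyclic}} \Gamma(N) = \prod_{\chi \in H^\perp} g(\chi)$, where $\Gamma(N) := \prod_{\ker \chi = N} g(\chi)$, witnessing factorisability.

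The hard part will be the converse: one must enlarge $X$ uniformly to admit the root extractions and verify the factorisation holds for all $H$ simultaneously. The technical heart is that, under the $\perp$-duality, the M\"{o}bius inversion on the lattice of cyclic subgroups of $H$ (encoded in the definition of $f'$) aligns with the inversion on the lattice of cocyclic subgroups of $G$ containing $H$ (encoded in the character-theoretic factorisation); keeping the bookkeeping between divisions, cyclic subgroups, and characters consistent through this translation is what requires the most care.
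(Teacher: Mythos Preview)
The paper does not prove this proposition at all; it merely cites Fr\"ohlich's original papers \cite{Frolich1,Frolich4}. So there is no proof in the paper to compare your attempt against, and your argument must be judged on its own merits.

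Your converse direction contains a genuine gap. You fix a (non-canonical) isomorphism $G\simeq\widehat G$ to obtain a bijection between the cyclic subgroups of $G$ and the cocyclic subgroups of $G$, and you then claim that the identity $f(H)=\prod_{C\leq H\ \mathrm{cyclic}} f'(D_C)$ ``translates via this duality'' into $\psi(f(H))=\prod_{N\supseteq H,\ G/N\ \mathrm{cyclic}}\Gamma(N)$. But a global bijection between $\{\text{cyclic }C\leq G\}$ and $\{\text{cocyclic }N\leq G\}$ does not, and in general cannot, restrict to a bijection between $\{\text{cyclic }C\leq H\}$ and $\{\text{cocyclic }N\supseteq H\}$ for a fixed subgroup $H$: these two index sets need not even have the same cardinality. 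For instance, take $G=(\mathbb Z/p\mathbb Z)^3$ and $H$ a subgroup of order $p^2$; then $H$ has $p+2$ cyclic subgroups (the trivial one and $p+1$ of order $p$), whereas the cocyclic subgroups of $G$ containing $H$ are just $H$ and $G$, so only $2$. Thus the ``translation'' step collapses, and the proposed $g$ does not witness factorisability for non-cyclic $H$.

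Your forward sketch is also too optimistic. The assertion that the double M\"obius sum
\[
\sum_{\overline D\leq H\ \mathrm{cyclic}}\ \sum_{C\leq \overline D\cap\ker\chi}\mu\bigl((\overline D:C)\bigr)
\]
equals $\mathbf 1_{H\leq\ker\chi}$ is correct when $H$ is cyclic (then the inner lattice is a divisor lattice and both inversions are classical), but fails already for $H=(\mathbb Z/p\mathbb Z)^2$: for a nontrivial $\chi$ with kernel $K$ of order $p$ one computes the sum to be $1-p$, not $0$. So the character-by-character cancellation you describe does not occur for non-cyclic $H$, and the forward direction also needs a different organization. Since the paper offers no guidance here, you should consult Fr\"ohlich's original treatment for the correct combinatorial identity underlying both directions.
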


With the notion of factorisability of a function, we can define the factor equivalence between two $\mathcal{O}_{K}[G]$-lattices.

\begin{defi}
Let $\mathcal{M}, \mathcal{N}$ and $\imath: \mathcal{M} \to \mathcal{N}$ be as in the beginning of this subsection. Two lattices $\mathcal{M}$ and $\mathcal{N}$ are said to be factor equivalent if the function from $\mathscr{S}(G)$ to the group $\mathrm{Id}_{K}$ defined by
\begin{equation*}
    H \longrightarrow [\, \mathcal{N}^{H} : \imath(\mathcal{M}^{H}) \, ]_{\mathcal{O}_{K}}
\end{equation*}
is factorisable, where $[\, \mathcal{N}^{H} : \imath(\mathcal{M}^{H}) \,]_{\mathcal{O}_{K}} \in \mathrm{Id}_{K}$  denotes the order ideal (cf. \cite[\S 80]{curtis1966representation}) of the $\mathcal{O}_{K}$-torsion module $\mathcal{N}^{H}/\imath(\mathcal{M}^{H})$.
\end{defi}

\begin{rema}(cf. \cite[Prop. 2.5]{deSmit})\label{rema-factor}
We record the following basic facts:
\begin{enumerate}
    \item[(i)] The definition of factor equivalence does not depend on the choice of $\imath$.
    \item[(ii)] The factor equivalence is an equivalence relation.
\end{enumerate}
\end{rema}

The following fact is well-known, but we provide a proof for the readers' convenience.

\begin{lemm}\label{genusimpliesfactor}
Let $\mathcal{M}$ and $\mathcal{N}$ be two $\mathcal{O}_K[G]$-lattices. If they are genus equivalent, then they are factor equivalent.
\end{lemm}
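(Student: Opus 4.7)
The plan is to produce, for any chosen injection $\imath : \mathcal{M} \hookrightarrow \mathcal{N}$, an explicit factorisation of the function $f(H) := [\mathcal{N}^H : \imath(\mathcal{M}^H)]_{\mathcal{O}_K}$. The strategy is to exploit that genus equivalence gives $\mathcal{O}_{K,\mathfrak{p}}[G]$-isomorphisms $\psi_\mathfrak{p} : \mathcal{M}_\mathfrak{p} \xrightarrow{\sim} \mathcal{N}_\mathfrak{p}$ at each prime $\mathfrak{p}$, converting each local order ideal into the determinant of a local $G$-equivariant automorphism on $H$-invariants. The standard isotypic decomposition of a $K[G]$-module then places this determinant automatically into the factorisable shape $\prod_\chi (\cdots)^{\langle \chi, \mathbb{C}[G/H]\rangle}$.

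First, I would fix any $\mathcal{O}_K[G]$-injection $\imath : \mathcal{M} \hookrightarrow \mathcal{N}$; such an injection exists since $K \otimes_{\mathcal{O}_K} \mathcal{M}$ and $K \otimes_{\mathcal{O}_K} \mathcal{N}$ are $K[G]$-isomorphic (a consequence of genus equivalence at any single prime, since characters determine rational representations). For each prime $\mathfrak{p}$ of $\mathcal{O}_K$, I set $\alpha_\mathfrak{p} := \psi_\mathfrak{p}^{-1} \circ \imath_\mathfrak{p}$, which is an $\mathcal{O}_{K,\mathfrak{p}}[G]$-endomorphism of $\mathcal{M}_\mathfrak{p}$ that becomes an automorphism of $V_\mathfrak{p} := K_\mathfrak{p} \otimes \mathcal{M}_\mathfrak{p}$. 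Because order ideals are determined prime-by-prime and $\psi_\mathfrak{p}$ restricts to a bijection $\mathcal{M}_\mathfrak{p}^H \xrightarrow{\sim} \mathcal{N}_\mathfrak{p}^H$, the $\mathfrak{p}$-component of $f(H)$ equals the order ideal of $\mathcal{M}_\mathfrak{p}^H / \alpha_\mathfrak{p}(\mathcal{M}_\mathfrak{p}^H)$, which in turn equals $(\det \alpha_\mathfrak{p}|_{V_\mathfrak{p}^H})\mathcal{O}_{K,\mathfrak{p}}$ by the standard formula for torsion quotients of free modules over a discrete valuation ring.

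Next, I would pass to a splitting field $L \supset K$ for $G$, fix a prime $\mathfrak{P}|\mathfrak{p}$ of $L$, and isotypically decompose $L_\mathfrak{P} \otimes_{K_\mathfrak{p}} V_\mathfrak{p} \simeq \bigoplus_{\chi \in \mathrm{Irr}(G)} V_\chi \otimes W_{\chi,\mathfrak{p}}$; since $\alpha_\mathfrak{p}$ is $G$-equivariant, it acts on each summand as $\mathrm{Id}_{V_\chi} \otimes \tilde\alpha_{\chi,\mathfrak{p}}$ for some $\tilde\alpha_{\chi,\mathfrak{p}} \in \mathrm{GL}(W_{\chi,\mathfrak{p}})$. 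Using $\dim V_\chi^H = \langle \chi, \mathbb{C}[G/H]\rangle$, restriction to $H$-invariants yields $\det\alpha_\mathfrak{p}|_{V_\mathfrak{p}^H} = \prod_\chi (\det \tilde\alpha_{\chi,\mathfrak{p}})^{\langle \chi, \mathbb{C}[G/H]\rangle}$. Setting $Y := \mathrm{Id}_K \otimes_{\mathbb{Z}} \mathbb{Q}$ (into which $\mathrm{Id}_K$ embeds because it is torsion-free) and defining $g(\chi) := \prod_\mathfrak{p} \mathfrak{p}^{v_\mathfrak{p}(\det \tilde\alpha_{\chi,\mathfrak{p}})}$ then repackages this identity into the factorisability formula $f(H) = \prod_\chi g(\chi)^{\langle \chi, \mathbb{C}[G/H]\rangle}$ inside $Y$.

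The main obstacle I anticipate is ensuring that $g(\chi)$ is well defined independently of the auxiliary choice of $\mathfrak{P}|\mathfrak{p}$ used to perform the isotypic split: different choices produce $\mathrm{Gal}(L/K)$-conjugate data $(\mathfrak{P}, \chi, \tilde\alpha_{\chi,\mathfrak{p}})$. I would handle this by noting that Galois-conjugate elements of $L$ have equal normalised $\mathfrak{p}$-valuations (namely, the $\mathfrak{P}$-valuation divided by the ramification index), so $v_\mathfrak{p}(\det \tilde\alpha_{\chi,\mathfrak{p}})$ is unambiguous at the price of taking values in $\mathbb{Q}$—which is precisely why the target of $g$ is $\mathrm{Id}_K \otimes \mathbb{Q}$ rather than $\mathrm{Id}_K$. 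A secondary verification is that $\alpha_\mathfrak{p}$ is an integral isomorphism for all but finitely many $\mathfrak{p}$, ensuring the formal product defining $g(\chi)$ has finite support.
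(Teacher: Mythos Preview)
Your argument is correct, but it takes a substantially different and more laborious route than the paper's. The paper first reduces to a prime-by-prime statement, exactly as you do, by writing $\mathrm{Id}_K \simeq \bigoplus_{\mathfrak{p}} \mathrm{Id}_{K_\mathfrak{p}}$. But then, rather than computing anything, it simply observes that at each $\mathfrak{p}$ the completed lattices are \emph{isomorphic}; choosing this isomorphism as the local injection makes the index function $H \mapsto [\mathcal{N}_\mathfrak{p}^H : \imath(\mathcal{M}_\mathfrak{p}^H)]$ identically equal to $\mathcal{O}_{K_\mathfrak{p}}$, which is trivially factorisable. The independence of factor equivalence on the choice of injection (Remark~\ref{rema-factor}(i)) finishes the argument in one line.

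Your approach instead keeps a single global injection $\imath$, converts the local index to a determinant via $\alpha_\mathfrak{p} = \psi_\mathfrak{p}^{-1}\circ\imath_\mathfrak{p}$, and then isotypically decomposes to exhibit an explicit factorisation $g(\chi)$. This is sound and has the merit of being constructive: you never invoke the independence-of-$\imath$ fact, and you actually produce the function $g$. The cost is the extra machinery (splitting field, isotypic decomposition, valuation bookkeeping). One remark: your concern about well-definedness of $g(\chi)$ under change of $\mathfrak{P}\mid\mathfrak{p}$ is somewhat misplaced, since factorisability only asks for the \emph{existence} of some $g$, not a canonical one; you can simply fix an arbitrary $\mathfrak{P}$ over each $\mathfrak{p}$ and proceed. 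The passage to $\mathrm{Id}_K\otimes\mathbb{Q}$ is still needed because the normalised valuations $v_\mathfrak{P}(\det\tilde\alpha_{\chi,\mathfrak{p}})/e(\mathfrak{P}\mid\mathfrak{p})$ are in general rational rather than integral, but that is a separate issue from the choice of $\mathfrak{P}$.
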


\begin{proof}
There is a canonical isomorphism $\mathrm{Id}_K \simeq \bigoplus_{\mathfrak{p}} \,\mathrm{Id}_{K_{\mathfrak{p}}}$ where $\mathfrak{p}$ runs over the maximal ideals of $\mathcal{O}_K$. Under this isomorphism, the ideal $[\, \mathcal{N}^H : \imath(\mathcal{M}^H) \,]_{\mathcal{O}_K}$ corresponds to the element
\begin{equation*}
\bigg ( \,\, [ \, (\mathcal{O}_{K_{\mathfrak{p}}} \otimes_{\mathcal{O}_K} \mathcal{N})^H : (1 \otimes \imath) \, (( \mathcal{O}_{K_{\mathfrak{p}}} \otimes_{\mathcal{O}_K} \mathcal{M})^H) \, ]_{\mathcal{O}_{K_{\mathfrak{p}}}} \,\, \bigg )_{\mathfrak{p}} \in \underset{\mathfrak{p}}{\bigoplus} \, \mathrm{Id}_{K_{\mathfrak{p}}}.
\end{equation*}
Hence, $\mathcal{N}$ and $\mathcal{M}$ are factor equivalent if and only if $\mathcal{O}_{K_{\mathfrak{p}}} \otimes_{\mathcal{O}_K} \mathcal{N}$ and $\mathcal{O}_{K_{\mathfrak{p}}} \otimes_{\mathcal{O}_K} \mathcal{M}$ are factor equivalent for all maximal ideals $\mathfrak{p}$. The latter can be checked by applying Remark \ref{rema-factor} (i) to the isomorphic $\mathcal{O}_{K_{\mathfrak{p}}}[G]$-lattices $\mathcal{O}_{K_{\mathfrak{p}}} \otimes_{\mathcal{O}_K} \mathcal{M}$ and $\mathcal{O}_{K_{\mathfrak{p}}} \otimes_{\mathcal{O}_K} \mathcal{N}$.
\end{proof}

Applying the theory of factor equivalence to arithmetic Galois modules has proved fruitful, with one lattice taken to be an arithmetic object and the other a standard (module-theoretic) lattice, as illustrated by the following examples (cf. \cite{deSmit, Frolich4}).

\begin{exam}\label{examfactor}
\begin{enumerate}
    \item[(i)] For a Galois extension $L/K$ of number fields, the normal basis theorem gives an isomorphism
    \begin{equation*}
     K \otimes_{\mathcal{O}_K} \mathcal{O}_L \simeq K[G_{L/K}] \simeq K \otimes_{\mathcal{O}_K} \mathcal{O}_K[G_{L/K}]   
    \end{equation*}
of $K[G_{L/K}]$-modules. Hence it is natural to study the factor equivalence between $\mathcal{O}_{L}$ and $\mathcal{O}_{K}[G_{L/K}]$.
    
    \item[(ii)] Let $L/K$ be a Galois extension of number fields, and let $S$ be a finite set of places of $L$ containing the set $S_{L, \infty}$ of all the archimedean places of $L$. Assume that $S$ is invariant under the action of $G_{L/K}$. Let $X_{S}$ denote the free abelian group generated by $S$, and let $Y_{S}$ be the kernel of the augmentation map $X_{S} \to \mathbb{Z}$, which maps each element of $S$ to $1$. By the generalized Dirichlet-Herbrand theorem on $S$-units (cf. \cite[Thm. I.3.7]{Gras3}), the multiplicative group $\mathcal{O}^{\times}_{L,S}$ of $S$-units of $L$ satisfies an isomorphism $$\mathbb{Q} \otimes_{\mathbb{Z}} \mathcal{O}^{\times}_{L,S} \simeq \mathbb{Q} \otimes_{\mathbb{Z}} Y_{S}$$ of $\mathbb{Q}[G_{L/K}]$-modules. Therefore, we can study the factor equivalence between $E_{L,S}:= \mathcal{O}^{\times}_{L,S}/\mu(L)$ and $Y_{S}$. We remark that if $L$ is a Galois extension over an admissible field $k$ where no infinite places of $k$ are ramified, then $Y_{S_{L,\infty}}$ is isomorphic to the augmentation ideal $I_{G_{L/k}}$ as $\mathbb{Z}[G_{L/k}]$-lattices.
\end{enumerate}
\end{exam}

\subsection{Theorems of Burns}\label{section Theorems of Burns}

As in \S \ref{Sec3.1}, let $K$ be a finite extension of $\mathbb{Q}$ or $\mathbb{Q}_p$. When $G$ is a finite abelian group, Burns \cite{Burns1} investigated when the factor equivalence of two $\mathcal{O}_K[G]$-lattices implies the genus equivalence. Building on this, in \cite{Burns2}, he obtained a necessary and sufficient condition for the genus equivalence of $E_{L}$ and $\mathcal{A}_{G_{L/k}}$, valid for all abelian extensions $L/k$ of admissible number fields $k$ unramified at the infinite places. In this subsection, we briefly explain these results, assuming throughout that $G$ is abelian and that $L/k$ is an abelian extension of an admissible field unramified at the infinite places.

\subsubsection{Arithmetic Criteria for the Existence of Local Minkowski Units in Abelian Extensions}

Let $\mathcal{A}$ be a $K$-algebra that is a quotient of $K[G]$, and let $X$ be an $\mathcal{O}_K[G]$-lattice such that $\mathcal{A}$ acts on $K \otimes_{\mathcal{O}_K} X$. We then set
\begin{equation*}
   \mathfrak{A}(\mathcal{A},X):= \{ \, \lambda \in \mathcal{A} \, | \, \lambda \cdot X \subseteq X \, \}.
\end{equation*}

Suppose now that $K \otimes_{\mathcal{O}_K} X$ is a subrepresentation of the regular representation $K[G]$. Then it is necessarily cyclic as a $K[G]$-module by the semisimplicity of $K[G]$. Since $G$ is abelian, the action of $K[G]$ factors through the unique quotient $K$-algebra $K(X)$ of $K[G]$. The induced action of $K(X)$ on $K \otimes_{\mathcal{O}_K} X$ is faithful. Consequently, $\mathfrak{A}(K(X),X)$ is an $\mathcal{O}_K$-order in $K(X)$, called the \textit{associated order} of $X$ in $K(X)$.



A normal subgroup $H$ of $G$ is called cocyclic (written $H <_{c} G$) if the quotient $G/H$ is cyclic. Burns introduced another equivalence relation on $\mathcal{O}_K[G]$-lattices called the \textit{order-equivalence}.

\begin{defi}(cf. \cite[\S 2]{Burns1}, \cite[\S 1]{Burns2})
Two $\mathcal{O}_{K}[G]$-lattices $X$ and $Y$ are said to be order-equivalent (written $X \circ Y$) if for every cocyclic subgroup $H$ of $G$ one has
\begin{equation*}
    \mathfrak{A}(K[G/H],X^{H}) = \mathfrak{A}(K[G/H],Y^{H}).
\end{equation*}
\end{defi}

Let $\mathfrak{M}_{K,G}$ denote the maximal $\mathcal{O}_{K}$-order in $K[G]$, and for every $\mathcal{O}_{K}[G]$-lattice $M$, let $M^{\mathfrak{M}_{K,G}}$ denote the maximal $\mathfrak{M}_{K,G}$-module contained in $M$. We also write $\hat{G}$ for the group of characters of $G$. For each subgroup $H$ of $G$, we let $H^{\perp}$ denote the subgroup of $\hat{G}$ consisting of characters that are trivial on $H$.

Given $\mathcal{O}_K[G]$-lattices $X$ and $Y$ with $K \otimes_{\mathcal{O}_K} X \simeq K \otimes_{\mathcal{O}_K} Y$ as $K[G]$-modules, we define the \textit{defect function} (cf. \cite[p. 260]{Burns1}, \cite[(1.16)]{Frolich1}) $$J(X,Y) : \mathscr{S}(\hat{G}) \to \mathrm{Id}_{K}.$$
For every subgroup $H^{\perp}$ of $\hat{G}$, it is given by 
\begin{equation*}
    J(X,Y)(H^{\perp}) = \frac{[X^{H} : (X^{\mathfrak{M}_{K,G}})^{H}]_{\mathcal{O}_{K}}}{[Y^{H} : (Y^{\mathfrak{M}_{K,G}})^{H}]_{\mathcal{O}_{K}}}.
\end{equation*}

The defect function is important in the works of Fr{\"o}hlich and Burns, since $X$ and $Y$ are factor equivalent if and only if $J(X,Y)$ is factorisable (cf. \cite[(1.17) in p. 411]{Frolich1}). Using the order-equivalence, Burns proved the following theorem. 

\begin{theo}(cf. \cite[Thm. 2]{Burns1})\label{factorimpliesgenus}
Let $K$ be a field of one of the following types:
\begin{enumerate}
    \item[$(i)$] a number field in which no prime divisor of $|G|$ ramifies in $K/\mathbb{Q}$, or
    \item[$(ii)$] an absolutely unramified local field, i.e. a finite unramified extension of $\Q_p$.
\end{enumerate}
Let $X$ be an $\mathcal{O}_{K}[G]$-lattice such that $K \otimes_{\mathcal{O}_K} X $ is isomorphic to a quotient $Q$ of $K[G]$, and let $\mathfrak{A}=\mathfrak{A}(Q,X)$ be the associated order of $X$ in $Q$. Then, $X$ and $\mathfrak{A}$ are genus equivalent if and only if both $X \circ \mathfrak{A}$ and $\widetilde{J(X,\mathfrak{A})}(\hat{G})=\mathcal{O}_{K}$ hold, where $\widetilde{J(X,\mathfrak{A})}$ denotes the factorisable quotient of the defect function $J(X,\mathfrak{A})$.
\end{theo}

\begin{rema}
The original formulation of Theorem \ref{factorimpliesgenus} in \cite[Thm.~2]{Burns1} is stated with $G$-$\circ$-equivalence in place of order equivalence. Since order equivalence implies $G$-$\circ$-equivalence while genus equivalence implies order equivalence, the present formulation, as used also in \cite{Burns2}, follows directly from Theorem 2 of \cite{Burns1}.
\end{rema}

We now return to the setting of local Minkowski units. Recall that $L/k$ is an abelian extension of an admissible field unramified at the infinite places. One easily checks that $\Q \otimes_\Z E_L$ is a subrepresentation of $\Q[G_{L/k}]$, and that $\mathbb{Q}(E_L)=A_{G_{L/k}}$ (the specialization of $K(X)$ with $K=\mathbb{Q},X=E_L$).

At first sight, Theorem \ref{factorimpliesgenus} appears to relate $E_{L}$ to $\mathfrak{A}(A_{G_{L/k}},E_{L})$, which contains $\mathcal{A}_{G_{L/k}}$. However, its significance lies in showing that certain arithmetic necessary conditions for the genus equivalence of $E_{L}$ and $\mathcal{A}_{G_{L/k}}$ are actually sufficient.

\vskip 5pt

The arithmetic necessary conditions are expressed in terms of the factorisable quotients of two functions $$h_{L/k},w_{L/k} : \mathscr{S}(\widehat{G_{L/k}}) \longrightarrow \mathbb{N},$$ where $\widehat{G_{L/k}}$ denotes the character group of $G_{L/k}$. They are defined by
\begin{equation*}
    h_{L/k}(H^{\perp}) : = \mathrm{lcm}( \, h_{L^{H}}, |G_{L/k}| \,), \qquad w_{L/k}(H^{\perp}) : = \mathrm{lcm}( \, w_{L^{H}}, |G_{L/k}| \,)
\end{equation*}
for every $H^{\perp} \in \mathscr{S}(\widehat{G_{L/k}})$. Here, $\mathrm{lcm}(a,b)$ denotes the least common multiple of $a, b \in \mathbb{N}$. For each abelian group $G$, define $$\tilde{\mathfrak{J}}_{G} := \big ( \underset{p}{\prod} \, p^{J_{p}} \big ) \cdot |G|^{-1},$$ where, for every prime $p$, we write $J_{p}$ for the number of non-trivial divisions of the $p$-Sylow subgroup of $G$. 

\begin{theo}(\cite[Thm. 3]{Burns2}) \label{Burns's theorem for genus equi}
Let $k$ be an admissible field. Let $L/k$ be an abelian extension unramified at the infinite places. Then, $E_{L}$ is genus equivalent to $\mathcal{A}_{G_{L/k}}$ if and only if we have both $$\tilde{h}_{L/k}(\widehat{G_{L/k}})=\tilde{w}_{L/k}(\widehat{G_{L/k}}) \cdot \tilde{\mathfrak{J}}_{G_{L/k}},$$ and $\hat{H}^{0}(H,E_{L})=1$ for every cocyclic subgroup $H$ of $G_{L/k}$.
\end{theo}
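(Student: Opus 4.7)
The plan is to derive the theorem from Theorem \ref{factorimpliesgenus} by performing the reduction in two orthogonal directions: a ``local'' cohomological translation that captures order equivalence, and an ``arithmetic'' factorisability computation that captures the triviality of the factorisable quotient of the defect at $G_{L/k}^{\ast}$. The natural intermediate lattice to introduce is the augmentation ideal $I_{G_{L/k}}$, since by Example \ref{examfactor}(ii) we have $\mathbb{Q} \otimes_{\mathbb{Z}} E_{L} \simeq \mathbb{Q} \otimes_{\mathbb{Z}} Y_{S_{L,\infty}} \simeq \mathbb{Q} \otimes_{\mathbb{Z}} I_{G_{L/k}}$ as $\mathbb{Q}[G_{L/k}]$-modules.

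First, I would apply Theorem \ref{factorimpliesgenus} to $X = E_L$ inside $Q = A_{G_{L/k}}$, with $K = \mathbb{Q}$ in the number field case (or $K = k$ in the imaginary quadratic case; note that $|G_{L/k}|$ is unramified in $k/\mathbb{Q}$ by the admissibility assumption). This reduces $E_L \sim \mathcal{A}_{G_{L/k}}$ to three pieces: (a) the associated order $\mathfrak{A}(A_{G_{L/k}}, E_L)$ equals $\mathcal{A}_{G_{L/k}}$ itself, (b) $E_L \circ \mathcal{A}_{G_{L/k}}$, and (c) $\widetilde{J(E_L, \mathcal{A}_{G_{L/k}})}(G_{L/k}^{\ast}) = \mathcal{O}_K$. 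Conditions (a) and (b) together amount to the same cocyclic-subgroup condition on associated orders.

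Second, I would translate (a)+(b) cohomologically. For each cocyclic $H \leq G_{L/k}$, the quotient $G_{L/k}/H$ is cyclic, so $\mathcal{A}_{G_{L/k}}^{H}$ is visibly $\mathbb{Z}[G_{L/k}/H]/(\widetilde{G_{L/k}/H})$, whose associated order in the appropriate summand of $\mathbb{Q}[G_{L/k}/H]$ is $\mathbb{Z}[G_{L/k}/H]$ itself. The associated order of $E_L^H$ in the same algebra is strictly larger than the group ring exactly when there is a nontrivial obstruction measured by Tate cohomology of the cyclic group $G_{L/k}/H$ acting on $E_L^H$; a direct computation (using Shapiro's lemma to rewrite $\hat{H}^0(G_{L/k}/H, E_L^H) \cong \hat{H}^0(H, E_L)$) shows that the associated orders coincide if and only if $\hat{H}^0(H, E_L) = 1$. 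This gives the second clause of the theorem.

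Third, I would handle (c) by factoring the defect via $I_{G_{L/k}}$: by the (multiplicative) transitivity
\begin{equation*}
J(E_L, \mathcal{A}_{G_{L/k}}) = J(E_L, I_{G_{L/k}}) \cdot J(I_{G_{L/k}}, \mathcal{A}_{G_{L/k}}).
\end{equation*}
The first factor on the right is the unit-regulator defect controlled by subfield class numbers and roots of unity (this is the content of the factor equivalence of $E_L$ and $Y_{S_{L,\infty}}$, which, as recalled just after the definitions of $h_{L/k}$ and $w_{L/k}$, gives a factorisable-quotient expression $\tilde{h}_{L/k}/\tilde{w}_{L/k}$ at $G_{L/k}^{\ast}$; the $\mathrm{lcm}(-,|G_{L/k}|)$ normalisations in the definitions exactly absorb the denominators coming from the class number formula). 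The second factor is purely representation-theoretic and its factorisable quotient at $G_{L/k}^{\ast}$ is $\tilde{\mathfrak{J}}_{G_{L/k}}$ by the calculation cited from \cite[p.~75]{Burns2} and \cite{Frolich1}. Setting the product equal to $1$ at $G_{L/k}^{\ast}$ yields the displayed arithmetic equality.

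The main obstacle, and the step requiring genuine care rather than bookkeeping, is the cohomological translation in the second paragraph: proving that equality of associated orders at every cocyclic layer is controlled precisely by $\hat{H}^0(H, E_L)$ and not by some larger cohomological invariant. Everything else is a transitivity/factorisability computation on defect functions, while this step requires using cyclicity of $G_{L/k}/H$ essentially (e.g.\ through Tate periodicity or an explicit generator argument) to pin down the associated order inside $\mathbb{Q}[G_{L/k}/H]$; any weakening of cyclicity would introduce higher-degree cohomology and break the clean statement.
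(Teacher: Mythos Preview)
The paper does not prove this statement: Theorem \ref{Burns's theorem for genus equi} is quoted verbatim from \cite[Thm.~3]{Burns2} and no proof (or even proof sketch) is supplied in the paper. There is therefore no ``paper's own proof'' to compare your proposal against.

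That said, your sketch has a concrete gap worth flagging. In your second step you claim that ``Shapiro's lemma'' gives $\hat{H}^{0}(G_{L/k}/H, E_L^{H}) \cong \hat{H}^{0}(H, E_L)$. Shapiro's lemma identifies $\hat{H}^{\ast}(G, \mathrm{Ind}_{H}^{G} N)$ with $\hat{H}^{\ast}(H, N)$; it says nothing about passing from $(G/H, M^{H})$ to $(H, M)$ for a $G$-module $M$. These two Tate groups are in general unrelated, and no rearrangement of Shapiro gives this isomorphism. Your cohomological translation of the order-equivalence condition therefore rests on an unjustified (and, as stated, false) identification. In Burns's actual argument the link between $\mathfrak{A}(\mathbb{Q}[G_{L/k}/\Delta], E_L^{\Delta})$ and $\hat{H}^{0}(\Delta, E_L)$ for cocyclic $\Delta$ is established by a direct analysis of the trace map on $E_L$ and its cokernel, not by any form of Shapiro; you would need to redo that step from scratch.
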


A noteworthy feature of this theorem is that it applies to all abelian Galois groups, since its proof does not rely on the classification of integral representations of $G_{L/k}$ over $\Z_p$. The functions $h_{L/k}$ and $w_{L/k}$ are related to the factor equivalence of $E_{L}$ and the lattice $Y_{S_{L,\infty}}$ introduced in Example \ref{examfactor} (ii) (cf. \cite[Thm. 5.2]{deSmit}, \cite[Thm. 7 (Multiplicative)]{Frolich4}). The invariant $\tilde{\mathfrak{J}}_{G}$ appears when considering the factor equivalence of $\mathcal{A}_{G}$ and $I_{G}$ (cf. \cite[page 75]{Burns2}, \cite{Frolich1}). Since $Y_{S_{L, \infty}}$ is isomorphic to $I_{G_{L/k}}$, this accounts for the appearance of these quantities in Theorem~\ref{Burns's theorem for genus equi}.

\vskip 5pt

\subsubsection{Applications of the Arithmetic Criteria to Genus Field Extensions}

The existence of local Minkowski units in a general Galois extension cannot be settled by representation-theoretic considerations alone. Since the arithmetic of general extensions is highly intricate, it is necessary to apply the arithmetic criteria in special cases where the number fields enjoy suitable arithmetic properties. Using Theorem~\ref{Burns's theorem for genus equi} together with the theory of central class fields (cf.~\cite{Frolich3}), Burns proved the existence of local Minkowski units for certain abelian $p$-extensions of admissible fields. This subsection briefly reviews these results and explains how they motivate our approach.

Throughout this subsection, let $k$ be an admissible field and $p$ a prime that does not divide $h_{k}w_{k}$. Following \cite{Burns2}, an abelian $p$-extension $L$ of $k$ is called a \textit{$p$-power genus field extension} if we have
\begin{equation*}
        G_{L/k} = \underset{v \in \Ram(L/k)}{\bigoplus} \, I_{L/k,v},
\end{equation*}
where $I_{L/k,v}$ denotes the inertia subgroup of $G_{L/k}$ at $v$. By a formula of Furuta \cite{Furuta}, $L$ is a $p$-power genus field extension of $k$ if and only if $p$ does not divide the genus number of $L$ over $k$.

\vskip 5pt

To state Burns's theorem, we recall some notation from \cite{UllomWatt}. For each finite place $v$ of $k$, let $\mathfrak{p}_v$ denote the maximal ideal of the valuation ring $\mathcal{O}_{k_v}$ for the local field $k_v$, and write $\mathbf{N}\mathfrak{p}_v \in \mathbb{N}$ for its ideal norm. Let $h_v$ be the smallest positive integer such that the $h_v$-th power of the prime ideal of $k$ corresponding to $v$ is principal, and fix a generator $\pi_v$ of this principal ideal.

If $v$ does not divide $p$, fix an element $x_v \in \mathcal{O}^{\times}_{k_v}$ whose class in the residue field $\kappa_v := \mathcal{O}_{k_v}/\mathfrak{p}_v$ generates the multiplicative group $\kappa_v^{\times}$. If $v$ divides $p$ and $I_{L/k,v}$ is cyclic, then fix $x_v \in \mathcal{O}^{\times}_{k_v}$ whose class in $$\mathcal{O}_{k_v}^{\times}/N_{L_w/k_v}\mathcal{O}_{L_w}^{\times} \simeq I_{L/k,v}$$ generates the group, where $w$ is a fixed prime of $L$ above $v$. For $x \in \mathcal{O}^{\times}_{k_v}$, define 

\begin{equation*}
[\, v,x \,] = \begin{cases}
m \bmod{(\mathbf{N}\mathfrak{p}_v -1)} \quad \text{if}\; v \nmid p \,\,\,\text{and}\,\, x \equiv x_v^m \pmod{\mathfrak{p}_v} \\
s \bmod{|I_{L/k,v}|} \,\,\,\,\,\qquad \text{if}\; v \mid p, \; \; I_{L/k,v} \; \text{is cyclic},\,\, \text{and}\,\, x \equiv x_v^s \pmod{N_{L_w/k_v}\mathcal{O}_{L_w}^{\times}}
\end{cases}
\end{equation*}

For finite places $v, v' \in \Ram(L/k)$, we consider $\pi_{v'} \in k^{\times}$ as an element of $\mathcal{O}_{k_v}$ and evaluate $[v, \pi_{v'}]$. Although $x_v$ and $\pi_v$ are chosen arbitrarily, this choice does not affect the $p$-divisibility of $[v,\pi_{v'}]$.

\vskip 5pt

In \cite{Frolich3, UllomWatt}, for an admissible field $k$ and a prime $p \nmid h_k w_k$, the $p$-power genus field extensions $L$ of $k$ with $p \nmid h_{L}$ were completely characterized in terms of the set $\Ram(L/k)$ and the $p$-divisibility of $[v_{i}, \pi_{v_{j}}]$ for $v_{i},v_{j} \in \Ram(L/k)$ (cf. \cite[Thm. 4]{Burns2}). Building on this, Burns \cite{Burns2} gave a complete characterization of the existence of local Minkowski units in such $L$.

As a preliminary remark, note that if $L$ is a $p$-power degree genus field extension of $k$ with $p \nmid h_{L}$ and $\mathrm{rk}_{p}(G_{L/k}) \leq 2$, then we have $|\Ram(L/k)|=\mathrm{rk}_{p}(G_{L/k})$ (cf. \cite[Thm. 4.(b)]{Burns2}). In particular, the group $I_{L/k,v}$ is cyclic for every $v \in \Ram(L/k)$. 

\begin{theo}(\cite[Thm. 5]{Burns2}) \label{genus equiv for genus extensions}
Let $k$ be an admissible field. Let $p$ be a prime not dividing $h_{k}w_{k}$. Let $L$ be a $p$-power degree genus field extension of $k$ with $p \nmid h_{L}$. Then $E_{L}$ and $\mathcal{A}_{G_{L/k}}$ are genus equivalent if and only if one of the following holds: 
\begin{enumerate}
    \item[(i)] $\Ram(L/k)=\{v_{1}\}$, and the group $G_{L/k}$ is cyclic;
    \item[(ii)] $\Ram(L/k)=\{v_{1},v_{2} \}$, the $p$-rank of $G_{L/k}$ is $2$, and both $[v_{1}, \pi_{v_{2}}]$ and $[v_{2},\pi_{v_{1}}]$ are not divisible by $p$.
\end{enumerate}
\end{theo}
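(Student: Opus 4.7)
The approach is to invoke Theorem \ref{Burns's theorem for genus equi}, which reduces the question $E_L \sim \mathcal{A}_{G_{L/k}}$ to the identity
\[
\tilde{h}_{L/k}(G_{L/k}^{\ast}) = \tilde{w}_{L/k}(G_{L/k}^{\ast}) \cdot \tilde{\mathfrak{J}}_{G_{L/k}}
\]
together with the vanishing of $\hat{H}^{0}(H, E_L)$ for every cocyclic subgroup $H$ of $G_{L/k}$. Because $|G_{L/k}|$ is a $p$-power, only the $p$-parts of both sides matter. First I would propagate $p \nmid h_k w_k h_L$ to all intermediate fields: a standard restriction-corestriction argument in a $p$-extension forces $p \nmid h_{L^H}$ for every $H \leq G_{L/k}$, and the hypothesis $p \nmid w_k$ together with the fact that $L/k$ is a $p$-extension makes each $w_{L^H}$ a $p$-adic unit as well. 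Consequently $h_{L/k}$ and $w_{L/k}$ are $p$-adic units on $S(G_{L/k}^{\ast})$, and the displayed identity collapses to the statement that the rational number $\tilde{\mathfrak{J}}_{G_{L/k}}$ is a $p$-adic unit.

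Second, I would compute $\tilde{\mathfrak{J}}_{G_{L/k}}$ directly from its definition using the direct-sum decomposition $G_{L/k} = \bigoplus_{v \in S_{L/k}} I_{L/k,v}$ supplied by the genus-field hypothesis. A combinatorial count of the number of non-trivial divisions of the Sylow $p$-subgroup should show that $\tilde{\mathfrak{J}}_{G_{L/k}}$ is a $p$-adic unit precisely when the $p$-rank of $G_{L/k}$ is at most $2$; by the Fröhlich-Ullom-Watt characterization alluded to just before the statement, this is equivalent to $|S_{L/k}| \leq 2$ and forces each inertia factor $I_{L/k,v}$ to be cyclic. This already rules out all cases with three or more ramified primes, or with $p$-rank exceeding $2$. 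In the case $|S_{L/k}|=1$ one has $G_{L/k} = I_{L/k,v_1}$, which the identity now demands be cyclic; under that hypothesis the cocyclic cohomology $\hat{H}^{0}(H, E_L)$ vanishes automatically (using $p \nmid h_L w_L$ and the cyclicity), recovering case (i).

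In the case $|S_{L/k}|=2$ with $p$-rank $2$, the cocyclic subgroups are the index-$p$ subgroups $H$ with $G_{L/k}/H$ cyclic. Applying Tate cohomology to $1 \to \mu(L) \to \mathcal{O}_L^{\times} \to E_L \to 1$ and using $p \nmid w_L$ gives $\hat{H}^{0}(H, E_L) \simeq \hat{H}^{0}(H, \mathcal{O}_L^{\times})$ on the $p$-part. For the cyclic sub-extension $L/L^H$, the Hasse norm theorem together with local reciprocity at the ramified places $v_1, v_2$ rewrites the vanishing of this Tate group as the requirement that $\pi_{v_1}$ be a local norm from $L$ at $v_2$ and $\pi_{v_2}$ be a local norm at $v_1$. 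Unravelling the definition of the symbols $[v, x]$ in the local residue field (for primes prime to $p$) or in $\mathcal{O}_{k_v}^{\times}/N_{L_w/k_v}\mathcal{O}_{L_w}^{\times}$ (for the wild prime), these local-norm conditions become exactly the non-$p$-divisibility of $[v_1, \pi_{v_2}]$ and $[v_2, \pi_{v_1}]$, which is condition (ii).

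The main obstacle will be the uniform translation of $\hat{H}^{0}(H, E_L)$ into the reciprocity symbols across all cocyclic $H$. One must chase the local-global norm exact sequence carefully, handle wild ramification at a prime above $p$ (where $I_{L/k,v}$ is cyclic but the local norm map requires the more subtle definition of $[v,\cdot]$ recalled in the excerpt), and verify that the symmetric pair of conditions at $H$'s of index $p$ in fact suffices for every cocyclic subgroup once the direct-sum decomposition is exploited.
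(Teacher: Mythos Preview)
This theorem is not proved in the paper; it is quoted from \cite[Thm.~5]{Burns2} as background, so there is no proof here against which to compare your plan. Invoking Theorem~\ref{Burns's theorem for genus equi} is the natural route and presumably close to Burns's own argument, but your execution has a concrete error that blocks case~(ii).

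The claim that $\tilde{\mathfrak{J}}_{G}$ is a $p$-adic unit precisely when $\mathrm{rk}_p(G)\le 2$ is false. For a $p$-group $G$ one has $\tilde{\mathfrak{J}}_{G}=p^{J_p}/|G|$ with $J_p$ the number of non-trivial cyclic subgroups; already for $G=(\mathbb{Z}/p\mathbb{Z})^2$ this gives $J_p=p+1$ and $\tilde{\mathfrak{J}}_G=p^{\,p-1}$, never a $p$-adic unit. There is also a slip earlier: $h_{L/k}(H^{\perp})=\mathrm{lcm}(h_{L^H},|G|)$ has $p$-part the constant $|G|$, not $1$, and your assertion that $p\nmid h_L$ propagates to all $h_{L^H}$ via restriction--corestriction fails in $p$-extensions (cf.\ the fields $N$ with $h_{N,p}=p$ in the proof of Lemma~\ref{lemma-application of Bouc}(iii)). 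None of this rescues the argument, however: since $h_{L^H,p}\le|H|\le|G|$ always, the $p$-part of $h_{L/k}$ is still constantly $|G|$, its factorisable quotient at $G^{\ast}$ is still a $p$-adic unit, and the identity of Theorem~\ref{Burns's theorem for genus equi} reduces on $p$-parts to $\tilde{\mathfrak{J}}_G=1$, which fails for every non-cyclic $G$. Thus your plan as written would \emph{exclude} case~(ii) rather than establish it. The genuine proof in \cite{Burns2} must involve a finer analysis than the clean split into ``identity'' and ``cohomology'' that you propose; to see how the symbol conditions $p\nmid[v_i,\pi_{v_j}]$ actually enter, you would need to consult Burns's paper directly.
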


\begin{rema}\label{criterionofBurns}
Consider the case $k=\mathbb{Q}$ with $p$ an odd prime. By class field theory, $I_{L/\mathbb{Q},v}$ is cyclic for every prime $v$ and every abelian $p$-extension $L/\mathbb{Q}$. Since $\mathbb{Q}$ has class number $1$, we may take $\pi_q=q$ for each rational prime $q$. It is known (cf. \cite[page 86]{Burns2}) that:
\begin{itemize}
\item[(i)] $[ \, p,q \,]$ is divisible by $p$ if and only if $q \equiv 1 \pmod{p^2}$, and
\item[(ii)] $[\, q,p \,]$ is divisible by $p$ if and only if $p$ is a $p$-th power residue modulo $q$.
\end{itemize}
\end{rema}
Let $L$ be a $p$-power genus field extension of $\mathbb{Q}$ ramified precisely at $p$ and $q$. If $[ \, p,q \,]$ is not divisible by $p$, then $h_L$ is prime to $p$ by a theorem of Fr$\ddot{\textrm{o}}$hlich (cf. \cite[Thm. 4.(b)]{Burns2}). In this case, $L$ is moreover $p$-rational (cf. Corollary \ref{totallyrealprational}).

In \S 6 we shall extend Theorem \ref{genus equiv for genus extensions} to \textit{non-abelian} $p$-extensions of $\mathbb{Q}$ unramified outside $p$ and $q$ such that $[\, p,q \,]$ is not divisible by $p$.

\subsection{Factor equivalence and regulator constants}\label{sectionfactorequivalence}

In this subsection, we present basic properties of the regulator constant that will be useful in later sections. We also recall the theorem of Bartel on the relationship between the theory of factor equivalence and the theory of regulator constants.

\subsubsection{Basic facts on regulator constants}

Let $G$ be a finite group and $\mathcal{R}$ a principal ideal domain with field of fractions $\mathcal{K}$. Throughout this subsection, we assume that $\mathcal{K}$ has characteristic prime to $|G|$.
The regulator constant $\mathcal{C}_{\Theta}(\mathcal{M})$ is an element of $\mathcal{K}^{\times}/\mathcal{R}^{\times \, 2},$ defined for every pair $(\mathcal{M},\Theta)$ of a $G$-relation $\Theta$ and an $\mathcal{R}[G]$-lattice $\mathcal{M}$ such that $\mathcal{K} \otimes_{\mathcal{R}} \mathcal{M}$ is a self-dual representation of $G$ over $\mathcal{K}$.
The theory of regulator constant was introduced by Tim and Vladimir Dokchitser in \cite{dokchitser2010birch} and has played a central role in several subsequent works.

\begin{defi}\label{G-relation}
A formal sum $\Theta = \sum_{H \leq G}n_H H$ of subgroups $H$ of $G$ with coefficients $n_H \in \mathbb{Z}$ is called a $G$-relation if there is an isomorphism
\begin{equation*}
 \underset{\substack{H \leq G \\ n_H < 0}}{\bigoplus} \, \mathbb{Q}[G/H]^{-n_H} \simeq \underset{\substack{H \leq G \\ n_H > 0}}{\bigoplus} \, \mathbb{Q}[G/H]^{n_H} 
\end{equation*}
of $\mathbb{Q}[G]$-modules.
\end{defi}

The set of $G$-relations forms a subgroup of the free abelian group $\mathbb{Z}[\mathscr{S}(G)]$ over the set $\mathscr{S}(G)$ of subgroups of $G$. Its $\mathbb{Z}$-rank is known to equal the number of conjugacy classes of non-cyclic subgroups of $G$ (cf. \cite[§13.1, Thm. 30]{serre1977linear}).
\begin{exam}\label{exam-G-relation}
\begin{enumerate}
    \item[(i)] If $G$ is cyclic, then there are no non-trivial $G$-relations.
    \item[(ii)] If $G$ is isomorphic to $(\mathbb{Z}/p\mathbb{Z})^2$ for a prime $p$, then the group of $G$-relations is generated over $\mathbb{Z}$ by the $G$-relation $$1+p\cdot G - \underset{H}{\sum} H,$$ where $H$ runs over the subgroups of $G$ of order $p$, and $1$ denotes the trivial subgroup.
\end{enumerate}
\end{exam}

\begin{defi}(\cite[Rem. 2.27]{dokchitser2009regulator})
Let $\mathcal{R}, \mathcal{K}, G$, and $\mathcal{M}$ be as above. Let $\langle\cdot,\cdot\rangle : \mathcal{M}\times \mathcal{M}\longrightarrow \mathcal{L}$ be a $\mathcal{R}$-bilinear, $G$-invariant pairing that is non-degenerate, with values in some extension $\mathcal{L}$ of $\mathcal{K}$. Let $\Theta=\sum_{H \leq G}n_{H}H$
be a $G$-relation. The regulator constant $\mathcal{C}_{\Theta}(\mathcal{M})$ of $\mathcal{M}$ with respect to $\Theta$ is defined by
\begin{equation*}
    \mathcal{C}_{\Theta}(\mathcal{M})=\prod_{H\leq G}\det{\left(\frac{1}{|H|}\langle\cdot,\cdot\rangle \big|_{\mathcal{M}^H}\right)^{n_H}}\in\mathcal{L}^{\times}/\mathcal{R}^{\times 2},
\end{equation*}
where each determinant is taken with respect to any $\mathcal{R}$-basis of $\mathcal{M}^{H}$.
\end{defi}

\begin{rema}\label{independenceonpairing}
\begin{enumerate}
    \item[(i)] It is known that $\mathcal{C}_{\Theta}(\mathcal{M})$ is independent of the particular choice of pairing (cf. \cite[Thm. 2.17]{dokchitser2009regulator}). Since $\mathcal{K} \otimes_{\mathcal{R}}\mathcal{M}$ is self-dual, there exists a non-degenerate $G$-invariant $\mathcal{K}$-bilinear pairing on $\mathcal{K}\otimes_{\mathcal{R}}\mathcal{M}$ with values in $\mathcal{K}$. Therefore, $\mathcal{C}_{\Theta}(\mathcal{M})$ is in fact defined in $\mathcal{K}^{\times}/\mathcal{R}^{\times 2}.$ 
    \item[(ii)] When $\mathcal{R}$ is equal to $\mathbb{Z}$, the regulator constants $\mathcal{C}_{\Theta}(\mathcal{M})$ take values in $\mathbb{Q}^{\times}$ because $\mathcal{R}^{\times \, 2}$ is trivial.
    \item[(iii)] The rational representations of the form $$\underset{H \leq G}{\bigoplus} \mathbb{Q}[G/H]^{a_H}, \qquad (a_H \in \mathbb{N})$$ are called \textit{permutation representations}. It is known that permutation representations are self-dual (cf. \cite[\S 3]{Bartel12}). Therefore, the regulator constant can be used to study $\mathbb{Z}[G]$-lattices whose rational representations are isomorphic to $A_{G} \oplus \mathbb{Q}[G]^m$ for $m \geq 0$, where $A_G$ denotes the representation $\mathbb{Q}[G]/(s_G)$.
    \item[(iv)] The readers can also refer to \cite[\S 3]{Bartel12} and \cite[Lem. 4.3]{Burns3} for other conceptual formulations of the regulator constant.
\end{enumerate}

\end{rema}

The following lemma is immediate from the definition.

\begin{lemm}\label{directproduct}
Let $\Theta$ and $\Theta'$ be $G$-relations, and let $\mathcal{M}$ and $\mathcal{M}'$ be $\mathcal{R}[G]$-lattices whose rational representations are self-dual. Then we have
\begin{equation*}
         \mathcal{C}_{\Theta}(\mathcal{M}\oplus \mathcal{M}')   =   \mathcal{C}_{\Theta}(\mathcal{M}) \cdot \mathcal{C}_{\Theta}(\mathcal{M}'), \qquad 
         \mathcal{C}_{\Theta+\Theta'}(\mathcal{M})   =  \mathcal{C}_{\Theta}(\mathcal{M}) \cdot \mathcal{C}_{\Theta'}(\mathcal{M}).
\end{equation*}
\end{lemm}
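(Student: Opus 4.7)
The plan is to unpack the definition of the regulator constant directly; both identities are bookkeeping, but each needs a small conceptual remark to be rigorous.

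For the second identity, I would simply note that the definition attaches to the formal sum $\Theta=\sum_{H\leq G}n_H H$ a product whose exponents are linear in the coefficients $n_H$. Writing $\Theta+\Theta'=\sum_{H\leq G}(n_H+n_H')H$ and using a single choice of non-degenerate $G$-invariant $\mathcal{R}$-bilinear pairing on $\mathcal{M}$, the defining product splits as
\begin{equation*}
\prod_{H\leq G}\det\!\left(\tfrac{1}{|H|}\langle\cdot,\cdot\rangle\big|\mathcal{M}^H\right)^{n_H+n_H'}
=\prod_{H\leq G}\det\!\left(\tfrac{1}{|H|}\langle\cdot,\cdot\rangle\big|\mathcal{M}^H\right)^{n_H}\cdot\prod_{H\leq G}\det\!\left(\tfrac{1}{|H|}\langle\cdot,\cdot\rangle\big|\mathcal{M}^H\right)^{n_H'},
\end{equation*}
which is exactly $\mathcal{C}_\Theta(\mathcal{M})\cdot\mathcal{C}_{\Theta'}(\mathcal{M})$ in $\mathcal{K}^\times/\mathcal{R}^{\times\,2}$.

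For the first identity, the key point is to pick a well-chosen pairing on $\mathcal{M}\oplus\mathcal{M}'$. Fix non-degenerate $G$-invariant $\mathcal{R}$-bilinear pairings $\langle\cdot,\cdot\rangle_{\mathcal{M}}$ on $\mathcal{M}$ and $\langle\cdot,\cdot\rangle_{\mathcal{M}'}$ on $\mathcal{M}'$ (these exist because the rational representations are self-dual). Then the orthogonal direct sum pairing $\langle\cdot,\cdot\rangle$ on $\mathcal{M}\oplus\mathcal{M}'$, defined by declaring $\mathcal{M}$ and $\mathcal{M}'$ to be orthogonal and restricting to the chosen pairings on each summand, is again $G$-invariant, $\mathcal{R}$-bilinear and non-degenerate. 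Because taking $H$-invariants commutes with direct sum, $(\mathcal{M}\oplus\mathcal{M}')^H=\mathcal{M}^H\oplus(\mathcal{M}')^H$, and the Gram matrix of $\langle\cdot,\cdot\rangle$ on a basis obtained by concatenating $\mathcal{R}$-bases of $\mathcal{M}^H$ and $(\mathcal{M}')^H$ is block-diagonal. Therefore
\begin{equation*}
\det\!\left(\tfrac{1}{|H|}\langle\cdot,\cdot\rangle\big|(\mathcal{M}\oplus\mathcal{M}')^H\right)
=\det\!\left(\tfrac{1}{|H|}\langle\cdot,\cdot\rangle_{\mathcal{M}}\big|\mathcal{M}^H\right)\cdot\det\!\left(\tfrac{1}{|H|}\langle\cdot,\cdot\rangle_{\mathcal{M}'}\big|(\mathcal{M}')^H\right).
\end{equation*}
Raising to the power $n_H$ and taking the product over $H\leq G$ yields the desired factorisation.

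There is really no obstacle here; the only subtlety is that the definition of $\mathcal{C}_\Theta$ a priori depends on a choice of pairing, so one must invoke Remark \ref{independenceonpairing} (i) to justify that the computed value is the regulator constant for any admissible pairing — in particular, to know that $\mathcal{C}_\Theta(\mathcal{M}\oplus\mathcal{M}')$ can be computed using the orthogonal direct sum pairing constructed above.
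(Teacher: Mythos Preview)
Your proof is correct and is essentially the same approach as the paper, which simply remarks that the lemma is immediate from the definition of the regulator constant. Your write-up just makes explicit the block-diagonal Gram matrix and the linearity in the exponents that the paper leaves to the reader.
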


The following properties of $G$-relations and the regulator constants will be useful.

\begin{lemm}(\cite[Exam. 2.30]{dokchitser2009regulator})\label{constant}
If $\sum_{H \leq G}n_{H}H$ is a $G$-relation, then we have $\sum_{H \leq G}n_{H}=0$.
\end{lemm}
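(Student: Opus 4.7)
The plan is to extract the integer $\sum_H n_H$ as the multiplicity of a specific irreducible character on both sides of the defining isomorphism of a $G$-relation, and argue that this multiplicity is the same basic quantity on each side. The natural character to use is the trivial character $\mathbf{1}_G$.

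First I would recall that for every subgroup $H \leq G$, Frobenius reciprocity (or a direct computation) yields
\[
\langle \mathbf{1}_G, \mathbb{Q}[G/H] \rangle_G \;=\; \langle \mathbf{1}_H, \mathbf{1}_H \rangle_H \;=\; 1,
\]
since $\mathbb{Q}[G/H] \simeq \mathrm{Ind}_H^G \mathbf{1}_H$ and the space of $G$-invariants in $\mathbb{Q}[G/H]$ is one-dimensional, spanned by $\sum_{gH \in G/H} gH$. Equivalently, $\dim_{\mathbb{Q}} \mathbb{Q}[G/H]^G = 1$.

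Next, take the defining isomorphism
\[
\bigoplus_{n_H < 0} \mathbb{Q}[G/H]^{-n_H} \;\simeq\; \bigoplus_{n_H > 0} \mathbb{Q}[G/H]^{n_H}
\]
of $\mathbb{Q}[G]$-modules and apply $\mathrm{Hom}_{\mathbb{Q}[G]}(\mathbf{1}_G, -)$, or equivalently pass to $G$-invariants and compare $\mathbb{Q}$-dimensions. The left-hand side contributes $\sum_{n_H < 0}(-n_H)$ and the right-hand side contributes $\sum_{n_H > 0} n_H$, so
\[
\sum_{n_H > 0} n_H \;=\; \sum_{n_H < 0} (-n_H),
\]
which rearranges to $\sum_{H \leq G} n_H = 0$.

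There is no real obstacle here: the only subtlety is recognising that the correct invariant to probe is the multiplicity of the trivial representation (as opposed to the total $\mathbb{Q}$-dimension, which would only yield the weaker identity $\sum_H n_H [G:H] = 0$). Once that is noticed, the proof is a one-line application of Frobenius reciprocity. Alternatively, one could evaluate both sides of the isomorphism of virtual characters at the identity of $G^{\mathrm{ab}}$, or use the fact that the augmentation map from the Burnside ring to $\mathbb{Z}$ (sending $[G/H] \mapsto 1$) factors through the rational representation ring, so its kernel contains every $G$-relation; this is essentially the same argument in categorical language.
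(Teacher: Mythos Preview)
Your argument is correct: computing the multiplicity of the trivial representation on both sides of the defining isomorphism via Frobenius reciprocity (equivalently, comparing dimensions of $G$-invariants) immediately yields $\sum_H n_H = 0$. The paper itself does not supply a proof of this lemma; it is stated with a citation to \cite[Exam.~2.30]{dokchitser2009regulator} and used as a known fact, so there is nothing to compare against beyond noting that your proof is the standard one.
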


\begin{lemm}\label{cyclic subgroups}(\cite[Lem. 2.46]{dokchitser2009regulator}, \cite[Rem. 3.2]{Torzewski})
If $H$ is a cyclic subgroup of $G$, then we have $$\mathcal{C}_{\Theta}(\mathcal{R}[G/H])= 1$$ for every $G$-relation $\Theta$.
\end{lemm}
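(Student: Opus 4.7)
The plan is to evaluate the regulator constant $\mathcal{C}_\Theta(\mathcal{R}[G/H])$ by a direct double-coset computation, and then to exploit the rigidity of the permutation representations of a cyclic group.

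First, I would equip $\mathcal{R}[G/H]$ with the $G$-invariant bilinear pairing for which the standard basis $\{gH\}$ is orthonormal; by Remark \ref{independenceonpairing}(i), the particular choice of pairing does not affect $\mathcal{C}_\Theta$. For every $H' \leq G$, the invariants $(\mathcal{R}[G/H])^{H'}$ have a basis indexed by double cosets $H'\backslash G/H$: to the double coset $H'gH$ we attach the orbit sum $v_g := \sum_{x \in H'gH/H} x$, which is orthogonal to every other orbit sum and satisfies $\langle v_g,v_g\rangle = |H'gH/H| = |H'|/|H' \cap gHg^{-1}|$. Since the Gram matrix of $\frac{1}{|H'|}\langle\cdot,\cdot\rangle$ on $(\mathcal{R}[G/H])^{H'}$ is therefore diagonal, this gives
\begin{equation*}
\mathcal{C}_\Theta(\mathcal{R}[G/H]) \;=\; \prod_{H' \leq G}\prod_{g \in H'\backslash G/H} |H' \cap gHg^{-1}|^{-n_{H'}}.
\end{equation*}

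Next, using the bijection $H'\backslash G/H \leftrightarrow H\backslash G/H'$, $g \mapsto g^{-1}$, which replaces $H' \cap gHg^{-1}$ by its conjugate $H \cap g^{-1}H'g$ of the same order, I would regroup the product according to the subgroup $K = H \cap g^{-1}H'g$ of $H$, obtaining
\begin{equation*}
\mathcal{C}_\Theta(\mathcal{R}[G/H]) \;=\; \prod_{K \leq H} |K|^{-m_K}, \qquad m_K := \sum_{H' \leq G} n_{H'}\cdot\bigl|\{g \in H\backslash G/H' : H \cap gH'g^{-1} = K\}\bigr|.
\end{equation*}

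The key step is to verify that $\sum_K m_K K$ is itself an $H$-relation. This follows from Mackey's decomposition $\mathrm{Res}_H^G \mathbb{Q}[G/H'] \cong \bigoplus_{g \in H\backslash G/H'} \mathbb{Q}[H/(H \cap gH'g^{-1})]$ combined with the exactness of restriction: summing with coefficients $n_{H'}$ and invoking the defining vanishing $\sum_{H'} n_{H'}\mathbb{Q}[G/H'] = 0$ of a $G$-relation yields $\sum_K m_K \mathbb{Q}[H/K] = 0$ in the rational representation ring of $H$. When $H$ is cyclic, however, the permutation modules $\{\mathbb{Q}[H/K]\}_{K \leq H}$ are actually $\mathbb{Z}$-linearly independent in this ring, because if one orders the subgroups of $H$ by their index then the decomposition matrix into rational irreducibles is upper unitriangular with $1$'s on the diagonal. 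Hence every $m_K$ vanishes, the product collapses to $1$, and $\mathcal{C}_\Theta(\mathcal{R}[G/H]) = 1$ in $\mathcal{K}^\times/\mathcal{R}^{\times 2}$. The main obstacle in this plan is the Mackey step, that is, checking that the passage $\Theta \mapsto \sum_K m_K K$ sends $G$-relations to $H$-relations; the rest of the argument is combinatorial bookkeeping together with standard linear algebra for cyclic groups.
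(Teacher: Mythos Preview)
Your argument is correct. The paper does not supply its own proof of this lemma; it simply cites \cite[Lem.~2.46]{dokchitser2009regulator} and \cite[Rem.~3.2]{Torzewski}. Your approach is essentially the one underlying the Dokchitser--Dokchitser proof: one identifies $\mathcal{R}[G/H]$ with $\mathrm{Ind}_H^G\mathcal{R}$, and the Mackey computation you carry out shows that $\mathcal{C}_\Theta(\mathrm{Ind}_H^G\mathcal{R})$ equals $\mathcal{C}_{\Theta|_H}(\mathcal{R})$, where $\Theta|_H = \sum_K m_K K$ is the image of $\Theta$ under the restriction map on Burnside rings. Since cyclic groups have no nontrivial relations, $\Theta|_H=0$ and the regulator constant is $1$. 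You have unpacked this into an explicit Gram-matrix calculation rather than invoking a general induction/restriction formula for regulator constants, but the mathematical content is the same. The step you flag as the ``main obstacle'' (that restriction carries $G$-relations to $H$-relations) is in fact routine, being an immediate consequence of the Mackey decomposition and the exactness of $\mathrm{Res}^G_H$ on rational representations; the genuine input is the linear independence of the $\mathbb{Q}[H/K]$ for cyclic $H$, which you handle correctly.
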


For a finite group $G$, there are two natural ways to construct $G$-relations from those of its subgroups and quotient groups.

\begin{enumerate}
    \item[(i)] Let $H$ be a subgroup of $G$ and let $\Theta =  \sum_{H' \leq H}n_{H'}H'$ be an $H$-relation. Then, $\Theta$ is also a $G$-relation, which we denote by $\textrm{Ind}_{H}^{G}\Theta$. 
    \item[(ii)] Let $B$ be a normal subgroup of $G$ and set $G'=G/B$. Suppose $\Theta'= \sum_{B \leq H \leq G}a_{H/B}(H/B)$ is a $G'$-relation. Then, $$\underset{B \leq H \leq G}{\sum}a_{H/B}H$$ is a $G$-relation, called the \textit{inflation} of $\Theta$, and is denoted by $\Inf_{G'}^{G}\Theta'$.
\end{enumerate}

If $\mathcal{M}$ is an $\mathcal{R}[G']$-lattice, then $\mathcal{M}$ can be viewed as an $\mathcal{R}[G]$-lattice via the natural projection $G \to G'$. We denote this $\mathcal{R}[G]$-lattice by $\Inf_{G'}^{G}\mathcal{M}$. Similarly, if $\mathcal{N}$ is an $\mathcal{R}[G]$-lattice and $H \leq G$, we denote by $\Res_{H}^{G}\mathcal{N}$ the corresponding $\mathcal{R}[H]$-lattice obtained by restriction. Then, we have the following proposition.

\begin{prop}(\cite[Prop. 2.45]{dokchitser2009regulator})
The following statements hold:
\begin{itemize}
\item[(i)] Let $G$ be a finite group and $G'$ a quotient of $G$. For every $G'$-relation $\Theta'$ and every $\mathcal{R}[G']$-lattice $\mathcal{M}$ with self-dual rational representation, we have
\begin{equation*}
    \mathcal{C}_{\Inf_{G'}^{G}\Theta'}(\Inf_{G'}^{G}\mathcal{M}) = \mathcal{C}_{\Theta'}(\mathcal{M}).
\end{equation*}

\item[(ii)] Let $H$ be a subgroup of $G$. For every $H$-relation $\Theta$ and every $\mathcal{R}[G]$-lattice $\mathcal{N}$ with self-dual rational representation, we have
\begin{equation*}
    \mathcal{C}_{\Ind_{H}^{G}\Theta}(\mathcal{N}) = \mathcal{C}_{\Theta}(\Res_{H}^{G}\mathcal{N}).
\end{equation*}

\end{itemize}
\end{prop}

\begin{proof}
Let $\langle \,, \, \rangle$ be a $G'$-invariant $\mathcal{R}$-bilinear non-degenerate pairing on $\mathcal{M}$. Then $\langle \, , \, \rangle$ is also a $G$-invariant non-degenerate pairing on $\Inf_{G'}^{G}\mathcal{M}$. The first equality follows from computing both regulator constants with $\langle \, , \, \rangle$. The second equality can be checked similarly by using a $G$-invariant $\mathcal{R}$-bilinear pairing on $\mathcal{N}$ as an $H$-bilinear pairing on $\Res_{H}^{G}\mathcal{N}$.
\end{proof}

Lastly, we mention a result on $v_p(\mathcal{C}_{\Theta}(\mathcal{M}))$ for rational primes $p$ when $\mathcal{R}$ is equal to $\mathbb{Z}$.

\begin{prop}(\cite[Prop. 3.9]{Bartel12}\label{pdivisibility})
Suppose that $\mathcal{R}$ is equal to $\mathbb{Z}$. Let $G$ be a finite group and $B$ be a normal subgroup of $G$ such that the quotient group $C=G/B$ is cyclic. Let $p$ be a prime not dividing $|B|$. Then, we have $v_p(\mathcal{C}_{\Theta}(\mathcal{M}))=0$ for every $G$-relation $\Theta$ and every $\mathbb{Z}[G]$-lattice $\mathcal{M}$ whose rational representation is self-dual.
\end{prop}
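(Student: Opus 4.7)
The plan is to exploit the hypothesis $p\nmid|B|$ to peel off the $B$-action, reducing to a regulator-constant computation on the cyclic quotient $C=G/B$, where every $C$-relation is trivial and Lemma \ref{cyclic subgroups} delivers the vanishing.

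First I would localise at $p$: the valuation $v_p(\mathcal{C}_\Theta(\mathcal{M}))$ depends only on $\mathcal{M}_p:=\mathbb{Z}_p\otimes_\mathbb{Z}\mathcal{M}$, so it suffices to work with $\mathbb{Z}_p[G]$-lattices. Because $p\nmid|B|$, $\mathbb{Z}_p[B]$ is a maximal $\mathbb{Z}_p$-order in $\mathbb{Q}_p[B]$, and the primitive central idempotents $e_1,\dots,e_r$ of $\mathbb{Q}_p[B]$ already lie in $\mathbb{Z}_p[B]$. The conjugation action of $G$ on $B$ permutes the $e_i$ through the quotient $C$, and for each $G$-orbit $\mathcal{O}\subseteq\{e_i\}$ the sum $f_\mathcal{O}:=\sum_{e\in\mathcal{O}}e$ is $G$-invariant and central in $\mathbb{Z}_p[G]$, yielding a $\mathbb{Z}_p[G]$-decomposition $\mathcal{M}_p=\bigoplus_\mathcal{O}f_\mathcal{O}\mathcal{M}_p$. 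By Lemma \ref{directproduct}, $v_p(\mathcal{C}_\Theta(\mathcal{M}_p))$ is the sum of the contributions $v_p(\mathcal{C}_\Theta(f_\mathcal{O}\mathcal{M}_p))$, so we treat one orbit at a time. Fixing $e\in\mathcal{O}$ with stabiliser $G_e$, one has $B\subseteq G_e$, the quotient $G_e/B$ is cyclic as a subgroup of $C$, and $f_\mathcal{O}\mathcal{M}_p\simeq\mathrm{Ind}_{G_e}^{G}(e\mathcal{M}_p)$ as $\mathbb{Z}_p[G]$-lattices.

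Next I would unfold $\mathcal{C}_\Theta(\mathrm{Ind}_{G_e}^G(e\mathcal{M}_p))$ using the Mackey decomposition $(\mathrm{Ind}_{G_e}^G N)^H=\bigoplus_{g\in H\backslash G/G_e} N^{g^{-1}Hg\cap G_e}$ of each $H$-fixed subspace appearing in the definition of the regulator constant. Rearranging the resulting product of Gram determinants by double cosets, and using $\sum_H n_H=0$ from Lemma \ref{constant} to cancel the accumulated powers of $[G:G_e]$, one obtains a $G_e$-relation $\Theta'$ (derived combinatorially from $\Theta$) such that $v_p(\mathcal{C}_\Theta(\mathrm{Ind}_{G_e}^G(e\mathcal{M}_p)))=v_p(\mathcal{C}_{\Theta'}(e\mathcal{M}_p))$. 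This reduces to the case $G_e=G$, where $e$ is central in $\mathbb{Z}_p[G]$ and $e\mathbb{Z}_p[G]$ is a crossed product of $e\mathbb{Z}_p[B]\cong M_n(\mathcal{O}_D)$ (for some division algebra $D/\mathbb{Q}_p$) by the cyclic group $C$. A Morita reduction together with the trivialisation of the defining $2$-cocycle over a sufficiently large unramified extension transports $e\mathcal{M}_p$ to a genuine module over a cyclic-group ring; a final application of Lemmas \ref{directproduct} and \ref{cyclic subgroups} then yields $v_p(\mathcal{C}_{\Theta'}(e\mathcal{M}_p))=0$.

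The main obstacle is the Mackey identification in the preceding paragraph: translating the double-coset expansion of $(\mathrm{Ind}_{G_e}^G(e\mathcal{M}_p))^H$ into a clean identity between regulator constants requires careful accounting of the rank contributions of the various fixed-point subspaces and of the $[G:G_e]$-powers, with the cancellations relying essentially on both $p\nmid|B|$ and the vanishing $\sum_H n_H=0$. Once this combinatorial step is established, the concluding cyclic computation is essentially a formal consequence of Lemma \ref{cyclic subgroups} together with the additivity in Lemma \ref{directproduct}.
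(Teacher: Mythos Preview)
The paper does not prove this proposition; it is quoted from \cite[Prop.~3.9]{Bartel12} without argument, so there is no in-paper proof to compare against.

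Your overall plan---localise at $p$, decompose by the central idempotents of $\mathbb{Z}_p[B]$, and reduce to the cyclic quotient $C$---is natural, but the final step has a genuine gap. After reducing to a $G$-fixed primitive idempotent $e$, you invoke a ``Morita reduction together with trivialisation of the defining $2$-cocycle'' to transport $e\mathcal{M}_p$ to a module over a cyclic-group ring and then appeal to Lemma~\ref{cyclic subgroups}. But the regulator constant $\mathcal{C}_{\Theta'}(e\mathcal{M}_p)$ is assembled from the fixed-point lattices $(e\mathcal{M}_p)^H$ as $H$ ranges over the subgroups of $G$, and this subgroup-indexed data is not transported by a Morita equivalence of the ambient ring $e\mathbb{Z}_p[G]$: Morita equivalence forgets which group's subgroup lattice is being used. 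Concretely, for $e$ non-trivial and $H$ with $1\neq H\cap B\subsetneq B$, the lattice $(e\mathcal{M}_p)^H=\big((e_{H\cap B}\,e)\mathcal{M}_p\big)^{H/(H\cap B)}$ depends on how $H\cap B$ sits inside $B$ relative to $e$, not merely on the image $HB/B\leq C$, so there is no direct passage from $\Theta'$ to a $C$-relation. Lemma~\ref{cyclic subgroups} is in any case not the relevant endpoint---it evaluates $\mathcal{C}_\Theta$ on permutation lattices $\mathcal{R}[G/H]$ with $H$ cyclic, not on arbitrary lattices over a cyclic-group ring; what one actually uses at the end is that a cyclic group admits no non-zero relations. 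A smaller point: the summands $f_{\mathcal{O}}\mathcal{M}_p$ need not individually carry a self-dual rational representation (one must pair each orbit $\mathcal{O}$ with its contragredient), so invoking Lemma~\ref{directproduct} orbit-by-orbit also requires justification.
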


\subsubsection{Theorems of Bartel}

The regulator constant yields a new criterion to verify the factor equivalence of two $\mathbb{Z}[G]$-lattices.

\begin{theo}(\cite[Cor. 2.12]{Bartel14})\label{factor-regulatorconstant}
Let $G$ be a finite group. Let $\mathcal{M}$ and $\mathcal{N}$ be two $\mathbb{Z}[G]$-lattices with the same self-dual rational representation. Then, $\mathcal{M}$ and $\mathcal{N}$ are factor equivalent if and only if we have $$\mathcal{C}_{\Theta}(\mathcal{M})=\mathcal{C}_{\Theta}(\mathcal{N})$$ for all $G$-relations $\Theta$.
\end{theo}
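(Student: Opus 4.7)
The plan is to convert both conditions into the same statement about a single defect function. Fix a $\mathbb{Z}[G]$-injection $\imath \colon \mathcal{M} \hookrightarrow \mathcal{N}$, which exists because $\mathcal{M}$ and $\mathcal{N}$ have isomorphic rational representations; by Remark \ref{rema-factor}(i) the factor equivalence of $\mathcal{M}$ and $\mathcal{N}$ depends only on the factorisability of the function
\[
F \colon S(G) \to \mathbb{Q}_{>0}, \qquad F(H) := [\mathcal{N}^{H} : \imath(\mathcal{M}^{H})]_{\mathbb{Z}}.
\]
By self-duality, fix a $G$-invariant non-degenerate $\mathbb{Q}$-bilinear pairing $\langle\cdot,\cdot\rangle$ on $V := \mathbb{Q} \otimes_{\mathbb{Z}} \mathcal{M} \cong \mathbb{Q} \otimes_{\mathbb{Z}} \mathcal{N}$; by Remark \ref{independenceonpairing}(i) the regulator constants do not depend on this choice.

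First I would compute the ratio of regulator constants in terms of $F$. For each $H \leq G$, pick $\mathbb{Z}$-bases of $\mathcal{M}^{H}$ and $\mathcal{N}^{H}$ (which have the same rank, namely the multiplicity of the trivial representation in $\mathrm{Res}_{H}^{G} V$); expressing $\imath$ applied to the first basis in terms of the second gives an integer matrix $A_{H}$ with $|\det A_{H}| = F(H)$. Comparing Gram matrices yields $\det(\langle\cdot,\cdot\rangle | \mathcal{M}^{H}) = F(H)^{2} \cdot \det(\langle\cdot,\cdot\rangle | \mathcal{N}^{H})$, and the scalar factors of $|H|^{-1}$ inside each determinant in the definition of $\mathcal{C}_{\Theta}$ cancel in the ratio because the bases on each side have the same size. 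Hence for every $G$-relation $\Theta = \sum_{H} n_{H} H$,
\[
\mathcal{C}_{\Theta}(\mathcal{M}) \big/ \mathcal{C}_{\Theta}(\mathcal{N}) = \Big(\prod_{H \leq G} F(H)^{n_{H}}\Big)^{\!2} \in \mathbb{Q}^{\times}.
\]
Since each $F(H) > 0$, the equality $\mathcal{C}_{\Theta}(\mathcal{M}) = \mathcal{C}_{\Theta}(\mathcal{N})$ for all $\Theta$ is therefore equivalent to $\prod_{H} F(H)^{n_{H}} = 1$ for every $G$-relation.

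The second step is to show that this numerical vanishing condition is equivalent to factorisability of $F$. One direction is formal: if $\psi(F(H)) = \prod_{\chi} g(\chi)^{\langle\chi, \mathbb{Q}[G/H]\rangle}$, then on a $G$-relation the exponent of each $g(\chi)$ becomes $\langle\chi, \sum_{H} n_{H} \mathbb{Q}[G/H]\rangle = 0$, so the product trivialises under $\psi$. Conversely, the vanishing condition says exactly that the assignment $[\mathbb{Q}[G/H]] \mapsto F(H)$ extends to a well-defined group homomorphism from the permutation character subring $\mathrm{PR}(G) \subseteq R_{\mathbb{Q}}(G)$ to $\mathbb{Q}_{>0}$, because $\mathrm{PR}(G)$ is the quotient of $\mathbb{Z}[S(G)]$ by the subgroup of $G$-relations. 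By Artin's induction theorem, $\mathrm{PR}(G)$ has finite index in $R_{\mathbb{Q}}(G)$, so this homomorphism extends uniquely to $R_{\mathbb{Q}}(G) \to \mathbb{Q}_{>0} \otimes_{\mathbb{Z}} \mathbb{Q}$; reading off its values on irreducibles defines $g$, and taking $\psi \colon \mathbb{Q}_{>0} \hookrightarrow \mathbb{Q}_{>0} \otimes_{\mathbb{Z}} \mathbb{Q}$ (injective since $\mathbb{Q}_{>0}$ is free abelian on the primes) completes the factorisation.

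The main obstacle is the converse in the second step: one needs Artin's finite-index result to extend from the permutation character ring to all of $R_{\mathbb{Q}}(G)$ without losing injectivity of $\psi$, which forces the target of $\psi$ to be a $\mathbb{Q}$-vector space big enough to absorb the finite denominators introduced when inverting the Artin index. Once this is in place, combining the two equivalences yields the theorem.
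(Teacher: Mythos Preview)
The paper does not give its own proof of this statement; it simply quotes \cite[Cor.~2.12]{Bartel14}. Your argument is correct and in fact reconstructs Bartel's proof: your first step is precisely the identity the paper records as \cite[Lem.~2.11]{Bartel14} (equation~(\ref{Bartel-relation}) in the proof of Proposition~\ref{factor-regulatorconstant ppart}), and your second step is the content of \cite[Prop.~2.4(4)]{Bartel14}.

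One small point deserves a sentence of clarification. After extending your homomorphism to $R_{\mathbb{Q}}(G)\to \mathbb{Q}_{>0}\otimes_{\mathbb{Z}}\mathbb{Q}$, ``reading off its values on irreducibles'' produces a function on \emph{rational} irreducibles, whereas the definition of factorisability used in the paper requires $g$ to be defined on \emph{complex} irreducible characters. This is not a genuine obstacle, because your target $\mathbb{Q}_{>0}\otimes_{\mathbb{Z}}\mathbb{Q}$ is a $\mathbb{Q}$-vector space and hence uniquely divisible: for each $\mathbb{Q}$-irreducible $W$ with $W\otimes\mathbb{C}\simeq m_W\bigl(\chi_1\oplus\cdots\oplus\chi_{k_W}\bigr)$, set $g(\chi_i):=\phi([W])^{1/(m_W k_W)}$. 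Since $\langle\chi_i,\mathbb{C}[G/H]\rangle$ is constant on the Galois orbit and equals $m_W$ times the multiplicity of $W$ in $\mathbb{Q}[G/H]$, one checks immediately that $\prod_{\chi}g(\chi)^{\langle\chi,\mathbb{C}[G/H]\rangle}=\prod_W\phi([W])^{\text{mult.\ of }W}=\psi(F(H))$, as required. With this remark your proof is complete.
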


The factor equivalence of two lattices can be studied locally, as shown in the following proposition.

\begin{prop}\label{factor-regulatorconstant ppart}
Let $G$ be a finite group and $p$ a prime. Let $\mathcal{M}$ and $\mathcal{N}$ be $\mathbb{Z}[G]$-lattices as in Theorem \ref{factor-regulatorconstant}. Then, $\mathbb{Z}_p \otimes_{\mathbb{Z}} \mathcal{M}$ and $\mathbb{Z}_p \otimes_{\mathbb{Z}} \mathcal{N}$ are factor equivalent as $\mathbb{Z}_{p}[G]$-lattices if and only if we have $$v_p(\mathcal{C}_{\Theta}(\mathcal{M})) = v_p(\mathcal{C}_{\Theta}(\mathcal{N}))$$ for all $G$-relations $\Theta$.
\end{prop}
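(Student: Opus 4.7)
The plan is to specialise the global criterion of Theorem \ref{factor-regulatorconstant} to the $p$-local setting, combining a local-global analysis of the defect function with a careful reading of Bartel's proof.

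First, I would establish a local-global principle for factor equivalence of $\mathbb{Z}[G]$-lattices: the decomposition $\mathrm{Id}_{\mathbb{Z}} \simeq \bigoplus_{\ell} \ell^{\mathbb{Z}}$ shows that the global defect function $H \mapsto [\mathcal{N}^{H}:\imath(\mathcal{M}^{H})]_{\mathbb{Z}}$ splits as a tuple of local defect functions, one for each prime. Since factorisability of a function $S(G) \to \bigoplus_\ell \ell^\mathbb{Z}$ is equivalent to factorisability of each component (the product over characters can be assembled or projected componentwise), we obtain that $\mathcal{M}$ and $\mathcal{N}$ are factor equivalent over $\mathbb{Z}$ if and only if $\mathbb{Z}_\ell \otimes_{\mathbb{Z}} \mathcal{M}$ and $\mathbb{Z}_\ell \otimes_{\mathbb{Z}} \mathcal{N}$ are factor equivalent over $\mathbb{Z}_\ell$ for every $\ell$. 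This reduces the statement to analysing the single prime $p$.

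Next, I would invoke the $\mathbb{Z}_p$-analogue of Theorem \ref{factor-regulatorconstant}. Bartel's proof of \cite[Cor. 2.12]{Bartel14} only uses that the base ring is a PID whose field of fractions has characteristic prime to $|G|$, and applies verbatim to $\mathcal{R} = \mathbb{Z}_p$: two $\mathbb{Z}_p[G]$-lattices $\mathcal{M}'$ and $\mathcal{N}'$ sharing a self-dual $\mathbb{Q}_p[G]$-representation are factor equivalent if and only if $\mathcal{C}_{\Theta}(\mathcal{M}') = \mathcal{C}_{\Theta}(\mathcal{N}')$ in $\mathbb{Q}_p^{\times}/\mathbb{Z}_p^{\times 2}$ for every $G$-relation $\Theta$. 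Applying this with $\mathcal{M}' = \mathbb{Z}_p \otimes_{\mathbb{Z}} \mathcal{M}$ and $\mathcal{N}' = \mathbb{Z}_p \otimes_{\mathbb{Z}} \mathcal{N}$, and then checking that $\mathcal{C}_{\Theta}(\mathbb{Z}_p \otimes_{\mathbb{Z}} \mathcal{M})$ is the natural image of $\mathcal{C}_{\Theta}(\mathcal{M}) \in \mathbb{Q}^{\times}$ in $\mathbb{Q}_p^{\times}/\mathbb{Z}_p^{\times 2}$ (pick a $\mathbb{Z}$-basis of $\mathcal{M}^{H}$, which becomes a $\mathbb{Z}_p$-basis of $(\mathbb{Z}_p \otimes_{\mathbb{Z}} \mathcal{M})^{H}$, and use the same $\mathbb{Z}$-bilinear pairing on both sides), one concludes that factor equivalence over $\mathbb{Z}_p$ is equivalent to $\mathcal{C}_{\Theta}(\mathcal{M})/\mathcal{C}_{\Theta}(\mathcal{N}) \in \mathbb{Z}_p^{\times 2}$ for every $\Theta$.

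Finally, I would sharpen this to $v_p$-equality. The defect function over $\mathbb{Z}_p$ takes values in $\mathrm{Id}_{\mathbb{Z}_p} = p^{\mathbb{Z}}$, so its factorisability really only sees $p$-adic valuations; consequently the $\mathbb{Z}_p^{\times}/\mathbb{Z}_p^{\times 2}$-part of the regulator constant cannot obstruct factor equivalence. Concretely, one runs through the computation that converts the defect $[\mathcal{N}^{H}:\imath(\mathcal{M}^{H})]_{\mathbb{Z}_p}$ into a quotient of Gram determinants: this equality holds as ideals, hence only records $v_p$, so the factorisability of $H \mapsto [\mathcal{N}^{H}:\imath(\mathcal{M}^{H})]_{\mathbb{Z}_p}$ is encoded precisely by the condition $v_p(\mathcal{C}_{\Theta}(\mathcal{M})) = v_p(\mathcal{C}_{\Theta}(\mathcal{N}))$ for every $G$-relation $\Theta$.

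The main obstacle is the last step, i.e.\ passing from equality modulo $\mathbb{Z}_p^{\times 2}$ to equality of valuations. The cleanest route is to replay Bartel's argument directly over $\mathbb{Z}_p$ while composing with the valuation map $v_p : \mathbb{Q}_p^{\times}/\mathbb{Z}_p^{\times 2} \twoheadrightarrow \mathbb{Z}$, so that both the defect function and the regulator constants are viewed in $\mathbb{Z}$ from the start; then the characterisation of factorisability of integer-valued functions on $S(G)$ via $G$-relations applies as in the proof of Theorem \ref{factor-regulatorconstant} and yields the stated equivalence without any extra sign or unit bookkeeping.
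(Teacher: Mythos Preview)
Your proposal is correct but takes a longer route than the paper. The paper's proof is a two-line argument: it quotes Bartel's identity \cite[Lem.~2.11]{Bartel14}
\[
\mathcal{C}_{\Theta}(\mathcal{M})/\mathcal{C}_{\Theta}(\mathcal{N}) \;=\; \prod_{H \leq G} [\,\mathcal{N}^{H} : \imath(\mathcal{M}^{H})\,]^{2n_H}
\]
as an equality in $\mathbb{Q}^{\times}$ (no $\mathbb{Z}^{\times 2}$ ambiguity), observes that $[\,(\mathbb{Z}_p \otimes \mathcal{N})^{H} : (1 \otimes \imath)(\mathbb{Z}_p \otimes \mathcal{M})^{H}\,]_{\mathbb{Z}_p} = |[\mathcal{N}^H : \imath(\mathcal{M}^H)]|_p$, and then invokes \cite[Prop.~2.4.(4)]{Bartel14}, which says a function $S(G)\to X$ is factorisable iff $\prod_H f(H)^{n_H}=1$ for all $G$-relations. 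Applying $v_p$ to the displayed identity finishes immediately.

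Your approach instead redevelops Theorem~\ref{factor-regulatorconstant} over $\mathbb{Z}_p$ and then has to deal with the $\mathbb{Z}_p^{\times 2}$ ambiguity --- the ``obstacle'' you flag in your last two paragraphs. The paper sidesteps this entirely by staying over $\mathbb{Z}$ (where regulator constants live in $\mathbb{Q}^{\times}$ on the nose) and only taking $v_p$ at the very end. Your final paragraph's suggestion to ``replay Bartel's argument while composing with $v_p$'' is essentially the paper's proof; you could have led with that and skipped the detour through the $\mathbb{Z}_p$-version of Theorem~\ref{factor-regulatorconstant}.
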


\begin{proof}
For an injective $\mathbb{Z}[G]$-morphism $\imath \colon \mathcal{M} \to \mathcal{N}$ and a $G$-relation $\Theta = \sum_{H \leq G} n_H H$, we have
\begin{equation}\label{Bartel-relation}
\mathcal{C}_{\Theta}(\mathcal{M})/\mathcal{C}_{\Theta}(\mathcal{N}) = \underset{H \leq G}{\prod} \, [ \, \mathcal{N}^H : \imath(\mathcal{M}^H) \,]^{2n_H}
\end{equation}
(cf. \cite[Lem. 2.11]{Bartel14}). The claim follows from the equality
\begin{equation*}
[\, (\mathbb{Z}_p \otimes_{\mathbb{Z}} \mathcal{N})^{H} : (1 \otimes \imath )((\mathbb{Z}_p \otimes_{\mathbb{Z}} \mathcal{M})^{H}) \,]_{\mathbb{Z}_{p}} = |[\, \mathcal{N}^H : \imath(\mathcal{M}^H) \,]|_p
\end{equation*}
and \cite[Prop. 2.4.(4)]{Bartel14}.
\end{proof}

\begin{coro}\label{factor-p-group}
Let $G$ be a finite $p$-group, and let $\mathcal{M}$ and $\mathcal{N}$ be $\Z[G]$-lattices affording the same self-dual rational representation. 
Then $\mathcal{M}$ and $\mathcal{N}$ are factor equivalent if and only if we have
\[
    v_p(\mathcal{C}_{\Theta}(\mathcal{M})) = v_p(\mathcal{C}_{\Theta}(\mathcal{N}))
\]
for all $G$-relations $\Theta$.   
\end{coro}

\begin{proof}
By the proof of Lemma \ref{genusimpliesfactor}, $\mathcal{M}$ and $\mathcal{N}$ are factor equivalent if and only if $\mathbb{Z}_{\ell} \otimes_{\mathbb{Z}} \mathcal{M}$ and $\mathbb{Z}_{\ell} \otimes_{\mathbb{Z}} \mathcal{N}$ are factor equivalent as $\mathbb{Z}_{\ell}[G]$-lattices for every prime $\ell$. Since $G$ is a $p$-group, these lattices are isomorphic for all $\ell \neq p$. Hence, the claim follows from Proposition \ref{factor-regulatorconstant ppart}.

\end{proof}

In \cite{Frolich4}, Fr$\ddot{\textrm{o}}$hlich obtained a theorem \cite[Thm. 7 (Multiplicative)]{Frolich4} on the factor equivalence of $S$-units and $Y_{S}$ (cf. Example \ref{examfactor}). This theorem of Fr$\ddot{\textrm{o}}$hlich was generalized by de Smit, who also gave a simplified proof \cite[Thm. 5.2]{deSmit}. In \cite{Bartel12}, Bartel independently proved a theorem (Theorem \ref{Bartel}) on the regulator constant of the $S$-units. He later verified that his theorem is equivalent to the theorem of de Smit (cf. \cite[page 8]{Bartel14}) by using Theorem \ref{factor-regulatorconstant}. The following theorem of Bartel provides an arithmetic description of the regulator constant of the unit lattice, which is the principal integral lattice in this paper. Although his theorem is formulated for general $S$-units, we restrict here to the special case of the group of ordinary units.

\begin{theo}\label{Bartel}(\cite[Prop. 2.15]{Bartel12})
Let $L/K$ be a finite Galois extension of number fields with Galois group $G$. For each subgroup $H$ of $G$, write $\lambda(H)$ for the order of the kernel of the map
\begin{equation*}
H^{1}(H,\mu(L)) \longrightarrow H^{1}(H,\mathcal{O}_{L}^{\times})
\end{equation*}
induced by the inclusion $\mu(L) \hookrightarrow \mathcal{O}_{L}^{\times}$. If $\Theta = \sum_{H \leq G} n_H H$ is a $G$-relation, then we have
\begin{equation*}
\mathcal{C}_{\Theta}(E_L) = \mathcal{C}_{\Theta}(\mathbb{Z}) \cdot \underset{H \leq G}{\prod} \bigg ( \frac{R_{L^H}}{\lambda(H)} \bigg)^{2n_H},
\end{equation*}
where $\mathbb{Z}$ in $\mathcal{C}_{\Theta}(\mathbb{Z})$ denotes the $\mathbb{Z}[G]$-lattice $\mathbb{Z}$ with trivial $G$-action.
\end{theo}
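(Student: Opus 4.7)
The plan is to unwind the regulator constant of $E_M$ using a natural $G$-invariant pairing arising from the Dirichlet logarithm embedding, and then separate the arithmetic data (the regulators $R_{M^H}$ and the indices $\lambda(H)$) from a combinatorial contribution that matches $\mathcal{C}_\Theta(\mathbb{Z})$.

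Since the regulator constant does not depend on the choice of $G$-invariant pairing (Remark \ref{independenceonpairing}(i)), I would work with the one induced by the logarithm embedding $\ell(u) = (e_w \log|\sigma_w(u)|)_w$, which has kernel $\mu(M)$ and identifies $E_M$ with a full-rank sublattice of $Y_{S_\infty(M)}^{\mathbb{R}} = \ker(X_{S_\infty(M)}^{\mathbb{R}} \to \mathbb{R})$. The restriction of the standard Euclidean pairing on $X_{S_\infty(M)}^{\mathbb{R}}$ (for which the places in $S_\infty(M)$ form an orthonormal basis) then equips $\mathbb{Q} \otimes_{\mathbb{Z}} E_M$ with a non-degenerate $G$-invariant pairing $\langle \cdot, \cdot \rangle$.

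Next, applying the long exact cohomology sequence of $1 \to \mu(M) \to \mathcal{O}_M^\times \to E_M \to 1$ at each subgroup $H \leq G$ realises $E_{M^H}$ as a sublattice of $E_M^H$ of index exactly $\lambda(H)$. A direct Gram matrix computation---using that each $H$-orbit of infinite places of $M$ above $w_0 \in S_\infty(M^H)$ has size $|H|/|H_{w_0}|$ and contributes a corresponding scaling to the pulled-back pairing---yields $\det(\langle \cdot, \cdot \rangle|_{E_{M^H}}) = |H|^{r_H} \cdot |S_\infty(M^H)| \cdot R_{M^H}^2$, where $r_H$ is the $\mathbb{Z}$-rank of $E_{M^H}$. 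Combining this with the index factor $\lambda(H)$ gives
\begin{equation*}
\det\left(\tfrac{1}{|H|}\langle \cdot, \cdot \rangle\big|_{E_M^H}\right) = \frac{|S_\infty(M^H)| \cdot R_{M^H}^2}{\lambda(H)^2},
\end{equation*}
and raising to $n_H$ and multiplying over $H$ yields $\mathcal{C}_\Theta(E_M) = \prod_{H \leq G} |S_\infty(M^H)|^{n_H} \cdot \prod_{H \leq G} (R_{M^H}/\lambda(H))^{2n_H}$.

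It remains to identify $\prod_H |S_\infty(M^H)|^{n_H}$ with $\mathcal{C}_\Theta(\mathbb{Z})$. Computing directly from the definition gives $\mathcal{C}_\Theta(\mathbb{Z}) = \prod_H |H|^{-n_H}$, so the desired identity reduces to $\prod_H (|H| \cdot |S_\infty(M^H)|)^{n_H} = 1$. This follows from $|H| \cdot |S_\infty(M^H)| = |S_\infty(M)|$ when the decomposition groups at infinite places are trivial, combined with the constraint $\sum_H n_H = 0$ supplied by Lemma \ref{constant}; in the general case one performs the analogous double-coset bookkeeping $|S_\infty(M^H)| = \sum_{v \in S_\infty(N)} |H \backslash G / G_v|$. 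This combinatorial identification---matching the orbit counts on infinite places against the regulator constant of the trivial $\mathbb{Z}[G]$-lattice---is the most delicate step of the proof; everything else reduces to the standard relationship between logarithm Gram matrices and Dirichlet regulators and to tracking the $\lambda(H)$ coming from $H^1(H, \mu(M))$.
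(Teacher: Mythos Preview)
The paper does not prove this theorem; it is quoted from Bartel \cite[Prop.~2.15]{Bartel12}. Your outline is essentially Bartel's argument: compute $\mathcal{C}_\Theta(E_M)$ via the $G$-invariant pairing coming from the Dirichlet logarithm embedding, use the identity $[E_M^H:E_{M^H}]=\lambda(H)$ of Remark~\ref{lambda} to pass from $E_M^H$ to $E_{M^H}$, and then identify the residual combinatorial factor with $\mathcal{C}_\Theta(\mathbb{Z})$.

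There is, however, a real gap in the general case. Your intermediate formula $\det(\langle\cdot,\cdot\rangle|_{E_{M^H}})=|H|^{r_H}\,|S_\infty(M^H)|\,R_{M^H}^2$ holds only when every archimedean place of $M^H$ is unramified in $M$. When a real place $w_0$ of $M^H$ becomes complex in $M$, the weight it contributes to the pulled-back pairing is $2|H|$ rather than $|H|$; a Cauchy--Binet computation gives instead
\[
\det\Big(\tfrac{1}{|H|}\langle\cdot,\cdot\rangle\big|_{E_{M^H}}\Big)=\frac{|S_\infty(M)|}{|H|}\cdot\Big(\prod_{w_0\in S_\infty(M^H)}|H_{w_0}|\Big)\cdot R_{M^H}^2.
\]
Correspondingly, your combinatorial identity $\prod_H|S_\infty(M^H)|^{n_H}=\mathcal{C}_\Theta(\mathbb{Z})$ is false in general: for $G=S_3$, $N=\mathbb{Q}$, $M=\mathbb{Q}(\sqrt[3]{2},\zeta_3)$ and the relation $\Theta=1-2C_2-C_3+2S_3$, the left side is $3/4$ while $\mathcal{C}_\Theta(\mathbb{Z})=1/3$. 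What one must actually show, after inserting the corrected Gram determinant, is that $\prod_H\big(\prod_{w_0}|H_{w_0}|\big)^{n_H}=1$. This is true, and the cleanest proof is to observe that $\log_2\prod_{w_0}|H_{w_0}|$ is a sum over ramified real places $v$ of $N$ of $|C_G(c_v)|^{-1}\chi_{\mathbb{C}[G/H]}(c_v)$ (with $c_v$ the complex conjugation at $v$), so the product over $H$ vanishes because $\sum_H n_H\,\chi_{\mathbb{C}[G/H]}=0$ is precisely the definition of a $G$-relation. Your ``double-coset bookkeeping'' remark points in the right direction but does not supply this step. For all applications in the present paper the archimedean places are unramified, and there your simplified formulas are correct as stated.
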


\begin{rema}\label{regulatorconstantconstant}
Let $\Theta = \sum_{H \leq G}  n_H H$ be a $G$-relation. One checks  
\begin{equation*}
\mathcal{C}_{\Theta}(\mathbb{Z}) = \underset{H \leq G}{\prod} |H|^{-n_H}
\end{equation*}
using the bilinear map $\langle n,m \rangle := nm$ on $\mathbb{Z}$.
\end{rema}

\begin{rema}(\cite[Lem. 2.14]{Bartel12})\label{lambda}
Let $L/K$ be a Galois extension of number fields with Galois group $G$. Let $H$ be a subgroup of $G$. The embedding $\mathcal{O}_{L^{H}}^{\times} \hookrightarrow \mathcal{O}_{L}^{\times}$ induces an embedding $E_{L^{H}} \hookrightarrow E_{L}^{H}$. We can easily check
\begin{equation*}
    \lambda(H) = [E_{L}^{H} : E_{L^{H}}]
\end{equation*}
by using cohomology.
\end{rema}

\section{Regulator constants of some standard lattices}\label{standardlattices}

In this section, we derive formulae for $\mathcal{C}_{\Theta}(\mathcal{A}_G)$ and $\mathcal{C}_{\Theta}(I_G)$ that hold for general finite groups $G$ and $G$-relations $\Theta$. In \cite{Frolich1}, Fr{\"o}hlich proved that $\mathcal{A}_G$ and $I_G$ are not factor equivalent when $G$ is a non-cyclic abelian group. We provide a proof of this theorem by explicitly showing that if $G$ is not cyclic, then we have $\mathcal{C}_{\Theta}(\mathcal{A}_{G}) \neq \mathcal{C}_{\Theta}(I_G)$ for some $G$-relation $\Theta$. Our formulae will be useful in the study of the Galois module structure of unit lattices in later sections. When $G$ is cyclic, the lattices $\mathcal{A}_G$ and $I_G$ are factor equivalent, because $G$ has no non-trivial $G$-relations. Therefore, in this section, we assume that $G$ is non-cyclic.

\begin{prop}\label{Regulator constant C(Z[G]/G)}
Let $G$ be a non-cyclic finite group, and let $\Theta = \sum_{H \leq G} n_H H$ be a $G$-relation. Then we have
\begin{equation}\label{4.1.4}
\mathcal{C}_{\Theta}(\mathcal{A}_G) = \underset{H \leq G}{\prod}|H|^{n_H}.
\end{equation}
\end{prop}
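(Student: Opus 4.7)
The plan is to exploit the decomposition $\mathbb{Q}[G] \cong \mathbb{Q} \oplus A_G$ of $\mathbb{Q}[G]$-modules, which implies that $\mathbb{Z} \oplus \mathcal{A}_G$ and $\mathbb{Z}[G]$ share the same self-dual rational representation. Their regulator constants are then comparable via the Bartel index formula \eqref{Bartel-relation} established in the proof of Proposition \ref{factor-regulatorconstant ppart}. First I would construct an explicit $\mathbb{Z}[G]$-linear injection $\imath : \mathbb{Z} \oplus \mathcal{A}_G \hookrightarrow \mathbb{Z}[G]$ by sending $(n, x + \mathbb{Z}\widetilde{G})$ to $n\widetilde{G} + |G| x - \varepsilon(x) \widetilde{G}$. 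Well-definedness on cosets uses $\varepsilon(\widetilde{G}) = |G|$; $\mathbb{Z}[G]$-linearity uses $g\widetilde{G} = \widetilde{G}$ and $\varepsilon(gx) = \varepsilon(x)$; and injectivity follows because $|G|x = \varepsilon(x)\widetilde{G}$ forces $x$ to lie in $\mathbb{Z}\widetilde{G}$, which is trivial in $\mathcal{A}_G$.

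Next, for each subgroup $H \leq G$, I would compute the index $[\mathbb{Z}[G]^H : \imath((\mathbb{Z} \oplus \mathcal{A}_G)^H)]$. Since $H$ is finite, $H^1(H, \mathbb{Z}\widetilde{G}) = \mathrm{Hom}(H, \mathbb{Z}) = 0$, so the long exact sequence associated to $0 \to \mathbb{Z}\widetilde{G} \to \mathbb{Z}[G] \to \mathcal{A}_G \to 0$ identifies $\mathcal{A}_G^H$ with $\mathbb{Z}[G]^H / \mathbb{Z}\widetilde{G}$. Choosing coset representatives $g_1, \ldots, g_r$ of $H \backslash G$ (where $r = [G:H]$), the lattice $\mathbb{Z}[G]^H$ has basis $\{\widetilde{H} g_i\}_{i=1}^r$, and $\imath((\mathbb{Z} \oplus \mathcal{A}_G)^H)$ is generated by $\widetilde{G}$ together with the vectors $|G|\widetilde{H} g_i - |H| \widetilde{G}$ for $i = 1, \ldots, r-1$. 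After the change of basis $f_0 = \widetilde{G}$, $f_i = \widetilde{H}g_i$ for $1 \leq i \leq r-1$, the generator matrix becomes upper triangular with diagonal $(1, |G|, \ldots, |G|)$, yielding index $|G|^{[G:H]-1}$.

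Feeding this into \eqref{Bartel-relation} with $\mathcal{N} = \mathbb{Z}[G]$ and $\mathcal{M} = \mathbb{Z} \oplus \mathcal{A}_G$, and using $\mathcal{C}_\Theta(\mathbb{Z}[G]) = 1$ from Lemma \ref{cyclic subgroups} (applied to the trivial, hence cyclic, subgroup), I obtain
\begin{equation*}
\mathcal{C}_\Theta(\mathbb{Z} \oplus \mathcal{A}_G) = \prod_{H \leq G} |G|^{2([G:H]-1) n_H}.
\end{equation*}
Combining Lemma \ref{directproduct} with Remark \ref{regulatorconstantconstant} gives $\mathcal{C}_\Theta(\mathbb{Z} \oplus \mathcal{A}_G) = \mathcal{C}_\Theta(\mathcal{A}_G) \cdot \prod_H |H|^{-n_H}$, so it remains to show that the exponent $2 \sum_H ([G:H]-1) n_H$ of $|G|$ vanishes. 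This follows by combining Lemma \ref{constant}, which gives $\sum_H n_H = 0$, with the identity $\sum_H [G:H] n_H = 0$ obtained by equating $\mathbb{Q}$-dimensions on the two sides of the defining isomorphism of a $G$-relation. Rearranging then yields \eqref{4.1.4}. The main bookkeeping obstacle is the explicit index calculation, but the change of basis above makes it a routine triangular determinant.
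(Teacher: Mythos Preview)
Your argument is correct and follows a genuinely different route from the paper. The paper works directly on $\mathcal{A}_G$: it builds an explicit $G$-invariant pairing on $\mathcal{A}_G$ from the standard pairing on $\mathbb{Z}[G]$, computes the Gram matrix on each $(\mathcal{A}_G)^H$ as a circulant matrix, and evaluates the resulting determinant $|H||G|^{-1}$ via the product formula for circulant determinants; the stray power of $|G|$ is then killed by Lemma~\ref{constant}. Your approach instead embeds $\mathbb{Z}\oplus\mathcal{A}_G$ into $\mathbb{Z}[G]$ and trades the Gram-determinant computation for an index computation, invoking Bartel's identity~\eqref{Bartel-relation} and the vanishing $\mathcal{C}_\Theta(\mathbb{Z}[G])=1$ from Lemma~\ref{cyclic subgroups}. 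The triangularisation you describe is accurate, and the two linear relations $\sum_H n_H=0$ and $\sum_H [G{:}H]\,n_H=0$ (Lemma~\ref{constant} plus dimension counting in Definition~\ref{G-relation}) do kill the $|G|$-power, so \eqref{4.1.4} follows.

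What each buys: the paper's method is self-contained and does not appeal to the comparison formula~\eqref{Bartel-relation}, at the cost of the circulant-determinant identity. Your method is more structural---it never touches a pairing on $\mathcal{A}_G$---and it transparently explains why $\mathcal{C}_\Theta(\mathcal{A}_G)\cdot\mathcal{C}_\Theta(\mathbb{Z})=\mathcal{C}_\Theta(\mathbb{Z}[G])=1$, which is exactly Corollary~\ref{inverse} once one repeats the same trick with the exact sequence $0\to I_G\to\mathbb{Z}[G]\to\mathbb{Z}\to 0$. In that sense your approach unifies Propositions~\ref{Regulator constant C(Z[G]/G)} and~\ref{augmentationideal} in a single stroke.
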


\begin{proof}
By Remark \ref{independenceonpairing} (i), we may compute $\mathcal{C}_{\Theta}(\mathcal{A}_G)$ with any non-degenerate bilinear $G$-invariant pairing on $\mathcal{A}_G$. Let $(\, , \,)$ denote the pairing on $\mathbb{Z}[G]$ defined by $(g,g'):= \delta_{g,g'}$ for all $g,g' \in G$, where $\delta_{g,g'}$ denotes the Kronecker delta symbol. Define a second pairing $\langle \, , \, \rangle$ on $\mathbb{Z}[G]$ by
\begin{equation*}
\langle x, y \rangle := ( \, x - \frac{1}{|G|}\underset{g \in G}{\sum}gx \, , \, y - \frac{1}{|G|}\underset{g \in G}{\sum}gy \, )
\end{equation*}
for all $x,y \in \mathbb{Z}[G]$. A straightforward computation shows that $\langle g, g' \rangle=\delta_{g,g'}-1/|G|$ for all $g,g' \in G$. From this, one checks that both the left and right kernels of $\langle \, , \, \rangle$ coincide with the subspace $(s_G)$ of $\mathbb{Z}[G]$ generated by $s_G$. Hence, $\langle \, , \, \rangle$ induces a non-degenerate bilinear pairing on $\mathcal{A}_G$. The $G$-invariance of $\langle \, , \, \rangle$ on $\mathcal{A}_{G}$ follows directly from its $G$-invariance on $\mathbb{Z}[G]$.

 We can compute $\mathcal{C}_{\Theta}(\mathcal{A}_{G})$ by evaluating the determinant $\textrm{det} \big ( \langle , \rangle |_{(\mathcal{A}_{G})^{H}} \big )$ for each subgroup $H$ of $G$. For each subgroup $H$ of $G$, the set $\{ \overline{s_H \cdot \sigma} \}_{\sigma \not \in H}$ of the classes of $s_H \cdot \sigma \in \mathbb{Z}[G]$ with $\sigma \not \in H$, taken in $\mathcal{A}_G$, forms a $\mathbb{Z}$-basis of $\big ( \mathcal{A}_{G} \big )^{H}$.
By definition of $\langle \, , \, \rangle$, we have

\begin{equation*}
 \big \langle \,  \overline{s_H \cdot g_1}, \, \overline{s_H \cdot g_2} \, \big \rangle  = \begin{cases}
   \dfrac{|H|\cdot (|G|-|H|)}{|G|} \,\,\,\,\,& \text{if}\,\, Hg_1 = Hg_2, \\[8pt]
   -\dfrac{|H|^2}{|G|}\quad\quad\;\; \,\,\,\,&\text{if} \,\, Hg_1 \neq Hg_2.
   \end{cases} 
\end{equation*}

 Consequently, the matrix of $\big ( \frac{1}{|H|} \langle, \rangle|_{(\mathcal{A}_{G})^{H}} \big )$ with respect to the basis $\{ \overline{s_H \cdot \sigma} \}_{\sigma \not\in H}$ is the following circulant matrix of rank $(G:H)-1$:
\begin{equation}\label{circulant}
    \begin{pmatrix}
    \frac{|G|-|H|}{|G|} & -\frac{|H|}{|G|} & \cdots & -\frac{|H|}{|G|} & -\frac{|H|}{|G|} \\
   -\frac{|H|}{|G|} & \frac{|G|-|H|}{|G|} &  \cdots & -\frac{|H|}{|G|} & -\frac{|H|}{|G|} \\
    \vdots & \vdots & \ddots & \vdots & \vdots  \\
    -\frac{|H|}{|G|} & -\frac{|H|}{|G|} & \cdots & \frac{|G|-|H|}{|G|} & -\frac{|H|}{|G|} \\
    -\frac{|H|}{|G|} & -\frac{|H|}{|G|} & \cdots & -\frac{|H|}{|G|} & \frac{|G|-|H|}{|G|} 
    \end{pmatrix}.
\end{equation}
By the formula for the determinant of circulant matrices (cf. \cite[Thm. 1]{Conrad1}), the determinant of (\ref{circulant}) equals  
\begin{equation*}
   \frac{1}{|G|^{(G:H)-1}} \cdot \underset{j=0}{\overset{(G:H)-2}{\prod}} \, \bigg ( \, (|G|-|H|) - |H| \big ( \omega^{j}+\omega^{2j}+\cdots +\omega^{((G:H)-2)j} \big ) \bigg ),
\end{equation*}
where $\omega$ denotes a primitive $((G:H)-1)$th root of unity. The factor in the product equals $|H|$ when $j=0$, and $|G|$ otherwise. Therefore, the determinant $\textrm{det} \big ( \frac{1}{|H|}\langle , \rangle |_{(\mathcal{A}_{G})^{H}} \big )$ is equal to $|H||G|^{-1}.$ Hence, we have
\begin{equation}\label{4.1.7}
\mathcal{C}_{\Theta}(\mathcal{A}_{G}) = \underset{H \leq G}{\prod} \big ( |H|  |G|^{-1} \big )^{n_H}.
\end{equation}
By Lemma \ref{constant}, the exponent of $|G|$ in (\ref{4.1.7}) is $0$.
\end{proof}

\begin{prop}\label{augmentationideal}
Let $G$ be a non-cyclic finite group. Let $\Theta = \sum_{H \leq G} n_H H$ be a $G$-relation. Then we have
\begin{equation*}
    \mathcal{C}_{\Theta}(I_{G}) = \underset{H \leq G}{\prod}|H|^{-n_H}.
\end{equation*}
\end{prop}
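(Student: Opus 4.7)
The plan is to compare $I_G$ with $\mathcal{A}_G$ through an explicit injection and invoke the relation between regulator constants and lattice indices (Equation (\ref{Bartel-relation})) established by Bartel. Since $\mathbb{Q}\otimes_{\mathbb{Z}}I_{G}\simeq \mathbb{Q}\otimes_{\mathbb{Z}}\mathcal{A}_{G}$ as $\mathbb{Q}[G]$-modules (both being canonical complements to the trivial representation in $\mathbb{Q}[G]$), and these are self-dual permutation-type representations by Remark \ref{independenceonpairing}(iii), this comparison is legitimate and reduces the problem to computing a single family of indices $[\mathcal{A}_{G}^{H}:\imath(I_{G}^{H})]$.

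The natural injection is the composition $\imath \colon I_{G}\hookrightarrow \mathbb{Z}[G]\twoheadrightarrow \mathcal{A}_{G}$, which is injective because $\mathbb{Z}\tilde{G}\cap I_{G}=0$ (since $\varepsilon(n\tilde{G})=n|G|$). Its cokernel is identified via the augmentation: $\varepsilon$ descends to a surjection $\mathcal{A}_{G}\twoheadrightarrow \mathbb{Z}/|G|\mathbb{Z}$ whose kernel is exactly $\imath(I_{G})$, giving the short exact sequence
\begin{equation*}
0\longrightarrow I_{G}\stackrel{\imath}{\longrightarrow}\mathcal{A}_{G}\longrightarrow \mathbb{Z}/|G|\mathbb{Z}\longrightarrow 0.
\end{equation*}
To pass to $H$-invariants I take the sequence $0\to \mathbb{Z}\tilde{G}\to \mathbb{Z}[G]\to \mathcal{A}_{G}\to 0$; since $H^{1}(H,\mathbb{Z})=0$ for the trivial action, taking $H$-invariants stays exact, so $\mathcal{A}_{G}^{H}=\mathbb{Z}[G]^{H}/\mathbb{Z}\tilde{G}\simeq \mathbb{Z}[H\backslash G]/\mathbb{Z}\tilde{G}$. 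The basis $\{\tilde{H}\sigma\}_{\sigma\in H\backslash G}$ of $\mathbb{Z}[G]^{H}$ has $\varepsilon(\tilde{H}\sigma)=|H|$, and under $\varepsilon$ the generator $\tilde{G}$ of the relation maps to $(G:H)$, so $\mathcal{A}_{G}^{H}/\imath(I_{G}^{H})\simeq \mathbb{Z}/(G:H)\mathbb{Z}$. This yields the key index computation
\begin{equation*}
[\,\mathcal{A}_{G}^{H}:\imath(I_{G}^{H})\,]=(G:H)=|G|/|H|.
\end{equation*}

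Finally, I combine this with Equation (\ref{Bartel-relation}) applied to $\imath\colon I_{G}\to \mathcal{A}_{G}$, Proposition \ref{Regulator constant C(Z[G]/G)}, and Lemma \ref{constant}. Explicitly,
\begin{equation*}
\frac{\mathcal{C}_{\Theta}(I_{G})}{\mathcal{C}_{\Theta}(\mathcal{A}_{G})}=\prod_{H\leq G}\left(\frac{|G|}{|H|}\right)^{\!2n_{H}}=|G|^{2\sum_{H}n_{H}}\prod_{H\leq G}|H|^{-2n_{H}}=\prod_{H\leq G}|H|^{-2n_{H}},
\end{equation*}
where the $|G|$ factor disappears thanks to $\sum_{H}n_{H}=0$. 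Multiplying by $\mathcal{C}_{\Theta}(\mathcal{A}_{G})=\prod_{H}|H|^{n_{H}}$ gives the claimed identity $\mathcal{C}_{\Theta}(I_{G})=\prod_{H}|H|^{-n_{H}}$.

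I do not expect any real obstacle here: the potential snag is a bookkeeping one, namely ensuring that the index $[\mathcal{A}_{G}^{H}:\imath(I_{G}^{H})]$ is genuinely $(G:H)$ rather than a fraction thereof coming from a discrepancy between $\imath(I_{G})^{H}$ and $\imath(I_{G}^{H})$. This is handled cleanly by the vanishing $H^{1}(H,\mathbb{Z})=0$, which lets me identify $\mathcal{A}_{G}^{H}$ with $\mathbb{Z}[H\backslash G]/\mathbb{Z}\tilde{G}$ and compute the quotient by $I_{G}^{H}$ directly through $\varepsilon$.
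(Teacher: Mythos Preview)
Your argument is correct, but it follows a genuinely different route from the paper's own proof. The paper computes $\mathcal{C}_{\Theta}(I_{G})$ directly: it restricts the standard pairing $(g,g')=\delta_{g,g'}$ on $\mathbb{Z}[G]$ to $I_{G}$, checks non-degeneracy, writes down the basis $\{\tilde{H}(\sigma-1)\}_{H\sigma\neq H}$ of $(I_{G})^{H}$, obtains a circulant Gram matrix, and evaluates its determinant as $(G:H)$. This is a self-contained computation exactly parallel to the proof of Proposition~\ref{Regulator constant C(Z[G]/G)} and does not invoke that proposition or Bartel's index formula.

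Your approach instead leverages Proposition~\ref{Regulator constant C(Z[G]/G)} together with Equation~(\ref{Bartel-relation}) applied to the injection $\imath\colon I_{G}\hookrightarrow\mathcal{A}_{G}$, reducing everything to the single index $[\mathcal{A}_{G}^{H}:\imath(I_{G}^{H})]=(G:H)$. This is shorter and more conceptual, and it makes Corollary~\ref{inverse} essentially built into the argument; the trade-off is that it is not self-contained, since it rests on the already-proved formula for $\mathcal{A}_{G}$ and on the cited lemma of Bartel. One small remark on your closing paragraph: the equality $\imath(I_{G}^{H})=\imath(I_{G})^{H}$ holds automatically because $\imath$ is injective, so there is no genuine snag there; the vanishing $H^{1}(H,\mathbb{Z})=0$ is really serving to identify $\mathcal{A}_{G}^{H}$ with $\mathbb{Z}[G]^{H}/\mathbb{Z}\tilde{G}$, after which your computation $\mathbb{Z}[G]^{H}/(I_{G}^{H}+\mathbb{Z}\tilde{G})\simeq |H|\mathbb{Z}/|G|\mathbb{Z}$ via $\varepsilon$ goes through cleanly.
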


\begin{proof}

We use the restriction to $I_{G}$ of the pairing $( \, , \, )$ defined on $\mathbb{Z}[G]$ by $( g , g' )=\delta_{g,g'}$ for $g,g'\in G$. This pairing is $G$-invariant and symmetric. Moreover, the left kernel of $( \, , \, )$ is trivial, because if $\sum_{\tau \in G}a_{\tau}\tau \in I_{G}$ lies in the left kernel, then we have
\begin{equation*}
   \big ( \, \underset{\tau \in G}{\sum}a_{\tau}\tau \, , \, \sigma -1 \, \big ) = a_{\sigma} - a_{1} = 0
\end{equation*}
for all $\sigma \in G$. From $\sum_{\tau \in G}a_{\tau}=0$, we deduce $a_{1}=0$. By the symmetry of the pairing, the right kernel is also trivial.

For every subgroup $H$ of $G$, the submodule $(I_{G})^{H}$ coincides with the kernel of the restriction of the augmentation map to $s_H \cdot \mathbb{Z}[G] = (\mathbb{Z}[G])^{H}$. Hence, $(I_{G})^{H}$ is equal to $s_H \cdot I_{G}$ with $\mathbb{Z}$-basis $\{ s_H \cdot (\sigma -1 ) \}_{H \sigma \neq H }$. For this basis, we have
\begin{equation*}
   \frac{1}{|H|} \cdot \big (\, s_H \cdot (\sigma-1) ,\, s_H \cdot (\tau -1 ) \,\big ) = \begin{cases}
   2 \,\,\,\,\,\text{if}\,\, H\sigma = H\tau, \\
   1 \,\,\,\,\,\text{otherwise.}
   \end{cases}  
\end{equation*}
Thus, the matrix $\bigg ( \, \frac{1}{|H|} \big (\,\, s_H \cdot (\sigma-1), \,\, s_H \cdot(\tau-1)\,\, \big  ) \, \bigg )_{H\tau, H\sigma \neq H} $ is the circulant matrix of rank $(G:H)-1$, whose diagonal entries are equal to $2$ and whose off-diagonal entries are equal to $1$. Its determinant is equal to 
\begin{equation}\label{augmentationproduct}
    \underset{j=0}{\overset{(G:H)-2}{\prod}} \,\bigg ( 2 + (\omega^{j}+\omega^{2j} + \cdots + \omega^{((G:H)-2)j} ) \bigg ),
\end{equation}
where $\omega$ denotes a primitive $((G:H)-1)$th root of unity. The product (\ref{augmentationproduct}) equals $(G:H)$.
In conclusion, we have 
\begin{equation*}
    \mathcal{C}_{\Theta}(I_{G}) = \underset{H \leq G}{\prod} (G : H)^{n_H} = \underset{H \leq G}{\prod} |H|^{-n_H}.
\end{equation*}
The last equality follows from Lemma \ref{constant}.

\end{proof}

The formulae for the regulator constants of $\mathcal{A}_{G}$ and $I_{G}$ yield the following corollary.

\begin{coro}\label{inverse}
For every non-cyclic group $G$ and every $G$-relation $\Theta$, we have $\mathcal{C}_{\Theta}(\mathcal{A}_{G}) = \mathcal{C}_{\Theta}(I_{G})^{-1}$. In particular, $\mathcal{A}_{G}$ and $I_{G}$ are not factor equivalent if $G$ is not cyclic.

\end{coro}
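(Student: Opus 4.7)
The first identity follows directly from Propositions \ref{Regulator constant C(Z[G]/G)} and \ref{augmentationideal}: multiplying the two explicit product formulas yields
$$\mathcal{C}_{\Theta}(\mathcal{A}_G) \cdot \mathcal{C}_{\Theta}(I_G) = \prod_{H \leq G} |H|^{n_H} \cdot \prod_{H \leq G} |H|^{-n_H} = 1.$$

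For the ``in particular'' statement, I first observe that both $\mathbb{Q} \otimes_{\mathbb{Z}} \mathcal{A}_G$ and $\mathbb{Q} \otimes_{\mathbb{Z}} I_G$ are isomorphic to the augmentation representation $A_G$ (the former by definition, the latter via tensoring the short exact sequence $0 \to I_G \to \mathbb{Z}[G] \to \mathbb{Z} \to 0$ with $\mathbb{Q}$), so the hypothesis of Theorem \ref{factor-regulatorconstant} is satisfied. Hence it suffices to exhibit one $G$-relation $\Theta = \sum n_H H$ for which $\mathcal{C}_\Theta(\mathcal{A}_G) \neq \mathcal{C}_\Theta(I_G)$. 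Combining with the identity just proved, and using that the regulator constants over $\mathbb{Z}$ lie in $\mathbb{Q}^\times$, this reduces to producing a single $G$-relation with $\prod_H |H|^{n_H} \neq 1$.

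I would construct such a relation by induction on $|G|$. If $G$ has a non-cyclic proper subgroup $H$, the inductive hypothesis furnishes an $H$-relation $\Theta_0 = \sum_{H' \leq H} n_{H'} H'$ with $\prod_{H' \leq H} |H'|^{n_{H'}} \neq 1$; viewing $\Theta_0$ as $\mathrm{Ind}_H^G \Theta_0$ gives a $G$-relation with an identical formal sum (every subgroup of $H$ is a subgroup of $G$ of the same order), hence the same nontrivial invariant. This reduces the problem to the base case in which $G$ is minimal non-cyclic, which by the Miller--Moreno classification leaves us with $(\mathbb{Z}/p\mathbb{Z})^2$, $Q_8$, or a non-abelian semidirect product $\mathbb{Z}/p^a\mathbb{Z} \rtimes \mathbb{Z}/q^b\mathbb{Z}$ with $p \neq q$ primes and faithful action.

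The main obstacle is then the explicit verification in the base case. For $G = (\mathbb{Z}/p\mathbb{Z})^2$, the canonical generating relation $pG + \{1\} - \sum_{|H|=p} H$ can be checked directly (its unique $\mathbb{Q}[G]$-irreducible decomposition over the $p+1$ subgroups of order $p$ gives a cancellation), and its invariant evaluates to $(p^2)^p \cdot p^{-(p+1)} = p^{p-1} \neq 1$. For $Q_8$, decomposing each $\mathbb{Q}[Q_8/H]$ into the four one-dimensional characters and the four-dimensional rational quaternionic irreducible produces the generating relation $\langle -1 \rangle - \langle i \rangle - \langle j \rangle - \langle k \rangle + 2 Q_8$ with invariant $2$. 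For the metacyclic family, one must perform the analogous decomposition using the cyclotomic-type $\mathbb{Q}$-irreducibles of $\mathbb{Z}/p^a\mathbb{Z}$ inflated to $G$, solve the resulting linear system for the generator of the rank-one group of $G$-relations, and confirm that the resulting product of subgroup orders is a non-trivial prime power; the bookkeeping here is the technical core of the argument, though structurally parallel to the two cases above.
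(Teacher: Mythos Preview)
Your proof of the first identity matches the paper exactly. For the ``in particular'' part, your approach is genuinely different from the paper's. The paper invokes \cite[Cor.~9.2]{Bartel-Dokchitser15}, which characterises precisely when some $G$-relation $\Theta$ has $v_l(\mathcal{C}_\Theta(\mathbb{Z})) \neq 0$ in terms of the existence of a subquotient of $G$ isomorphic to $(\mathbb{Z}/l\mathbb{Z})^2$ or a non-abelian $\mathbb{Z}/l\mathbb{Z} \rtimes \mathbb{Z}/p\mathbb{Z}$; it then argues group-theoretically (via the classification of groups with all Sylow subgroups cyclic, \cite[Thm.~10.1.10]{robinson}) that every non-cyclic $G$ admits such a subquotient. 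Your route instead inducts on $|G|$ via $\mathrm{Ind}_H^G$ from a non-cyclic proper subgroup, reducing to minimal non-cyclic groups, and then verifies the base cases by hand. Your argument is more self-contained (no black-box citation), while the paper's is shorter and offloads the relation-construction to the literature.

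One correction to your base case: the minimal non-cyclic classification does not give $\mathbb{Z}/p^a\mathbb{Z} \rtimes \mathbb{Z}/q^b\mathbb{Z}$ with faithful action in general. For $a \geq 2$ and $q \neq p$, a faithful $C_{q^b}$-action on $C_{p^a}$ restricts to a faithful action on $C_{p^{a-1}}$, so $C_{p^{a-1}} \rtimes C_{q^b}$ is a non-cyclic proper subgroup. Likewise, for $b \geq 2$ a faithful action would force $C_p \rtimes C_{q^{b-1}}$ to be non-cyclic. The correct third family is $C_p \rtimes C_{q^b}$ where the action of $C_{q^b}$ on $C_p$ has image of order exactly $q$ (so $q \mid p-1$). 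For this family the unique generating relation (up to sign) is $\{1\} - C_p - q\,C_{q^{b-1}} + q\,C_p C_{q^{b-1}}$, with invariant $\prod |H|^{n_H} = p^{-1} \cdot (q^{b-1})^{-q} \cdot (pq^{b-1})^{q} = p^{q-1} \neq 1$; so your strategy goes through once the list is corrected.
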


\begin{proof}
The first part of the statement follows immediately from Proposition \ref{Regulator constant C(Z[G]/G)} and Proposition \ref{augmentationideal}. By Theorem \ref{factor-regulatorconstant}, the $\mathbb{Z}[G]$-lattices $\mathcal{A}_{G}$ and $I_{G}$ are factor equivalent if and only if we have $\mathcal{C}_{\Theta}(\mathcal{A}_{G})=\mathcal{C}_{\Theta}(I_{G})$ for every $G$-relation $\Theta$. By Remark \ref{regulatorconstantconstant}, this is equivalent to $\mathcal{C}_{\Theta}(I_{G})=\mathcal{C}_{\Theta}(\mathbb{Z})=1$ for all $\Theta$. By \cite[Cor. 9.2]{Bartel-Dokchitser15}, for a prime $l$, there exists a $G$-relation $\Theta$ with $v_{l}(\mathcal{C}_{\Theta}(\mathbb{Z})) \neq 1$ if and only if $G$ has a subquotient isomorphic to $(\mathbb{Z}/l\mathbb{Z})^2$ or to $\mathbb{Z}/l\mathbb{Z} \rtimes \mathbb{Z}/p\mathbb{Z}$ for a prime $p$ such that $\mathbb{Z}/p\mathbb{Z}$ acts faithfully on $\mathbb{Z}/l\mathbb{Z}$. It remains to show that such a subquotient exists if $G$ is not cyclic.

Suppose that $G$ is a non-cyclic group. If $G$ has no subquotient of the form $(\mathbb{Z}/l\mathbb{Z})^2$ for any prime $l$, then all the Sylow subgroups of $G$ are cyclic. By \cite[Thm. 10.1.10]{robinson}, the group $G$ admits a presentation
\begin{equation*}
\langle \, a,b \, | \, a^m=1=b^n, \, b^{-1}ab=a^r \, \rangle
\end{equation*}
for some integers $m,n,r > 0$ with $(r-1)n$ relatively prime to $m$. For every quotient $\mathcal{Q}$ of $G$, write $a_{\mathcal{Q}}$ and $b_{\mathcal{Q}}$ for the classes of $a$ and $b$ in $\mathcal{Q}$ respectively. For any element $x$ of a group, let $\langle x \rangle$ be the cyclic subgroup generated by $x$. Since $r-1$ is coprime to $m$, the commutator subgroup $[G,G]$ is equal to $\langle a \rangle$. Hence, for every prime $l$ dividing $m$, the quotient $G(l):=G/\langle a^l \rangle$ is non-abelian. Let $G(l)'$ be the quotient of $G(l)$ by the subgroup of elements of $\langle b_{G(l)} \rangle$ that commute with $a_{G(l)}$. Then, the quotient $G(l)'$ is isomorphic to a semi-direct product $\mathbb{Z}/l\mathbb{Z} \rtimes \mathbb{Z}/n'\mathbb{Z}$ for some $n'|n$ such that $\mathbb{Z}/n'\mathbb{Z}$ acts faithfully on $\mathbb{Z}/l\mathbb{Z}$. For each prime divisor $p$ of $n'$, the subgroup of $G(l)'$ generated by $a_{G(l)'}$ and $b_{G(l)'}^{n'/p}$ is a non-abelian group of order $pl$.
\end{proof}

\section{Proof of Theorem \ref{maintheorem}}\label{Proof of the Main Theorem}

In this section, we prove Theorem \ref{maintheorem}. Let $L$ be a number field. From the analytic class number formula and the functional equation of the Dedekind zeta function $\zeta_L$, we obtain the following formula
\begin{equation}\label{analyticclassnumberformula}
    \zeta_{L}^{*}(0)=-\frac{h_{L}}{w_{L}}R_{L}
\end{equation}
for the special value of $\zeta_L(s)$ at $0$.
The Artin formalism for Artin $L$-functions, together with (\ref{analyticclassnumberformula}), yields the following theorem proved independently by Brauer \cite{Brauer} and by Kuroda \cite{Kuroda}.

\begin{theo}[Brauer--Kuroda]\label{Brauer-Kuroda}
Let $L/K$ be a Galois extension of number fields with Galois group $G$. If there is a $G$-relation $\Theta = \sum_{H \leq G} n_H H$, then we have the equality 
\begin{equation}\label{BrauerKuroda0}
\underset{H \leq G}{\prod} \bigg ( \frac{h_{L^H} R_{L^H}}{w_{L^H}} \bigg )^{n_H} = 1.
\end{equation}
\end{theo}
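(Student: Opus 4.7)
The plan is to prove the identity by translating the Brauer--Kuroda formula into an identity of Dedekind zeta functions, proving that identity via Artin formalism, and then extracting leading coefficients at $s=0$ using the analytic class number formula.

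First, I would recall that for every intermediate field $L^{H}$, the Dedekind zeta function factors as an Artin $L$-function of the induced representation:
\begin{equation*}
\zeta_{L^{H}}(s) \;=\; L\bigl(s,\, \mathrm{Ind}_{H}^{G}\mathbf{1}_{H}\bigr) \;=\; \prod_{\chi \in \mathrm{Irr}(G)} L(s,\chi)^{\langle \chi,\, \mathbb{C}[G/H]\rangle},
\end{equation*}
since $\mathrm{Ind}_{H}^{G}\mathbf{1}_{H}$ has character $\chi \mapsto \langle \chi, \mathbb{C}[G/H]\rangle$. This uses the Artin formalism (inductivity and multiplicativity of Artin $L$-functions) applied to the tower $L/L^{H}/\mathbb{Q}$.

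Next, I would use the hypothesis that $\Theta = \sum_{H}n_{H}H$ is a $G$-relation: by definition (Definition \ref{G-relation}), $\bigoplus_{H}\mathbb{Q}[G/H]^{n_{H}} = 0$ in the virtual representation ring of $G$ over $\mathbb{Q}$, so for every irreducible character $\chi$,
\begin{equation*}
\sum_{H \leq G} n_{H}\, \langle \chi, \mathbb{C}[G/H]\rangle \;=\; 0.
\end{equation*}
Raising the zeta-factorization to the $n_{H}$-th power and taking the product over $H$ therefore yields the identity of meromorphic functions
\begin{equation*}
\prod_{H \leq G} \zeta_{L^{H}}(s)^{n_{H}} \;=\; \prod_{\chi \in \mathrm{Irr}(G)} L(s,\chi)^{\sum_{H} n_{H}\langle \chi, \mathbb{C}[G/H]\rangle} \;=\; 1.
\end{equation*}

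Finally, I would pass to the special value at $s=0$. Because the product of the $\zeta_{L^{H}}(s)^{n_{H}}$ is identically $1$, the orders of vanishing at $s=0$ satisfy $\sum_{H} n_{H}\cdot \mathrm{ord}_{s=0}\zeta_{L^{H}}(s)=0$, and comparing leading Laurent coefficients gives $\prod_{H}\zeta_{L^{H}}^{*}(0)^{n_{H}}=1$. Inserting the analytic class number formula (\ref{analyticclassnumberformula}) then yields
\begin{equation*}
\prod_{H \leq G} \left(-\frac{h_{L^{H}} R_{L^{H}}}{w_{L^{H}}}\right)^{\! n_{H}} \;=\; 1.
\end{equation*}
To clean up the sign, I would invoke Lemma \ref{constant}, which tells us $\sum_{H} n_{H}=0$, so $\prod_{H}(-1)^{n_{H}}=1$ and the desired equality follows.

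The main conceptual point is the passage from the $\mathbb{Q}$-representation identity to the $L$-function identity via Artin formalism; once that is in hand, the rest is a careful bookkeeping of leading terms. The only subtle step is justifying the leading-coefficient extraction, which is immediate from the fact that the product of the zeta functions is \emph{identically} the constant function $1$ rather than merely equal to $1$ at a single point, so I would state this explicitly to keep the argument clean.
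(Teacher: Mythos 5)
Your proof is correct and follows exactly the route the paper sketches but leaves to the cited references of Brauer and Kuroda: Artin formalism yields $\prod_{H}\zeta_{L^{H}}(s)^{n_{H}}=1$ as an identity of meromorphic functions, and extracting leading Laurent coefficients at $s=0$ via the analytic class number formula, together with $\sum_{H}n_{H}=0$ (Lemma~\ref{constant}) to absorb the signs, gives the stated identity. One cosmetic slip: the induction of characters runs along the tower $L/L^{H}/K$, not $L/L^{H}/\mathbb{Q}$, since the extension is Galois over $K$.
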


\noindent We shall use the following equivalent form of $(\ref{BrauerKuroda0})$, which is more convenient for our purpose :
\begin{equation}\label{BrauerKuroda}
\underset{H \leq G}{\prod} h_{L^H}^{n_H} = \big ( \underset{H \leq G}{\prod} R_{L^H}^{n_H} \big)^{-1} \times \big ( \underset{H \leq G}{\prod} w_{L^H}^{n_H} \big ).
\end{equation}

\begin{theo}[Theorem \ref{maintheorem}] \label{general necessary condition}
Let $L$ be a Galois extension of an admissible field $k$ with Galois group $G$. Suppose that no infinite places of $k$ are ramified in $L$. Then $E_L$ is factor equivalent to $\mathcal{A}_{G}$ as $\mathbb{Z}[G]$-lattices if and only if we have the equality
\begin{equation*}
\underset{H \leq G}{\prod} |H|^{n_H} =  \underset{ H \leq G}{\prod}h^{-n_H}_{L^H}  \cdot  \underset{H \leq G}{\prod} \left(\frac{\lambda(H)}{w_{L^H}}\right)^{-n_H} 
\end{equation*}
for all $G$-relations $\Theta = \sum_{H \leq G} n_H H$.
\end{theo}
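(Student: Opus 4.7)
The plan is to invoke Bartel's criterion (Theorem \ref{factor-regulatorconstant}), which converts the factor equivalence $E_L \sim \mathcal{A}_{G}$ into the equality of regulator constants $\mathcal{C}_{\Theta}(E_L) = \mathcal{C}_{\Theta}(\mathcal{A}_{G})$ for every $G$-relation $\Theta = \sum_{H \leq G} n_H H$. This is applicable because Minkowski's theorem (recalled in the introduction) gives $\mathbb{Q}\otimes_{\mathbb{Z}}E_L \simeq A_G \simeq \mathbb{Q}\otimes_{\mathbb{Z}}\mathcal{A}_{G}$ as $\mathbb{Q}[G]$-modules, and this common rational representation is self-dual (cf.\ Remark \ref{independenceonpairing}(iii)). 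When $G$ is cyclic there are no non-trivial $G$-relations, so both the factor equivalence and the asserted identity hold trivially; from now on we may assume $G$ is non-cyclic.

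Both regulator constants can then be computed directly from results already in the paper. Proposition \ref{Regulator constant C(Z[G]/G)} gives $\mathcal{C}_{\Theta}(\mathcal{A}_{G}) = \prod_{H \leq G}|H|^{n_H}$, while Theorem \ref{Bartel} combined with Remark \ref{regulatorconstantconstant} (which evaluates $\mathcal{C}_{\Theta}(\mathbb{Z})$ as $\prod |H|^{-n_H}$) yields
\[
\mathcal{C}_{\Theta}(E_L) \;=\; \prod_{H \leq G} |H|^{-n_H} \cdot \prod_{H \leq G}\bigl(R_{L^H}/\lambda(H)\bigr)^{2n_H}.
\]
Equating the two expressions and cancelling the common factor $\prod|H|^{-n_H}$, the factor equivalence is equivalent to
\[
\prod_{H \leq G} R_{L^H}^{2n_H} \;=\; \prod_{H \leq G} \bigl(|H|\,\lambda(H)\bigr)^{2n_H}.
\]
Since every quantity appearing is a positive real, the positive square root may be taken, yielding the equivalent identity $\prod R_{L^H}^{n_H} = \prod (|H|\lambda(H))^{n_H}$.

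The last step is to eliminate the regulators by feeding in the Brauer--Kuroda formula (Theorem \ref{Brauer-Kuroda}), which rewrites $\prod R_{L^H}^{n_H}$ as $\prod (w_{L^H}/h_{L^H})^{n_H}$. Substituting and rearranging produces exactly the displayed identity of the statement (equivalently, the symmetric form $\prod (|H|h_{L^H}\lambda(H)/w_{L^H})^{n_H}=1$ of Theorem \ref{maintheorem}). I do not anticipate a serious obstacle: the arithmetic input is concentrated entirely in Brauer--Kuroda, the rest being the formal regulator-constant machinery of Sections \ref{SecFactor} and \ref{standardlattices}. The only minor point deserving care is ensuring the positivity needed to pass from the squared to the unsquared identity, which is immediate from $R_{L^H}, h_{L^H}, w_{L^H}, |H|, \lambda(H) > 0$.
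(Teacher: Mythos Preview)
Your proposal is correct and follows essentially the same route as the paper: apply Theorem \ref{factor-regulatorconstant}, evaluate $\mathcal{C}_{\Theta}(\mathcal{A}_G)$ via Proposition \ref{Regulator constant C(Z[G]/G)} and $\mathcal{C}_{\Theta}(E_L)$ via Theorem \ref{Bartel} and Remark \ref{regulatorconstantconstant}, eliminate the regulators with Brauer--Kuroda, and take the positive square root. The only cosmetic differences are that you take the square root before invoking Brauer--Kuroda (the paper does it after), and that you make the self-duality and cyclic-$G$ cases explicit; your phrase ``cancelling the common factor $\prod|H|^{-n_H}$'' is slightly loose wording (you are really multiplying across by $\prod|H|^{n_H}$), but the algebra is right.
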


\begin{proof}
By Theorem \ref{factor-regulatorconstant}, $E_{L}$ is factor-equivalent to $\mathcal{A}_{G}$ as $\mathbb{Z}[G]$-lattices if and only if we have $\mathcal{C}_{\Theta}(E_{L}) = \mathcal{C}_{\Theta}(\mathcal{A}_{G})$ for all $G$-relations $\Theta$. By Theorem \ref{Bartel} and Proposition \ref{Regulator constant C(Z[G]/G)}, this holds if and only if we have
\begin{equation}\label{5.1.1}
\underset{H \leq G}{\prod}|H|^{n_H} = \mathcal{C}_{\Theta}(\mathbb{Z}) \cdot \underset{H \leq G}{\prod} \left(\frac{R_{L^H}}{w_{L^H}}\right)^{2n_H}
\end{equation}
for all $G$-relations $\Theta = \sum_{H \leq G}n_H H$.
By the equality (\ref{BrauerKuroda}) and Remark \ref{regulatorconstantconstant}, the condition (\ref{5.1.1}) is equivalent to the condition
\begin{equation}\label{above}
 \underset{H \leq G}{\prod}|H|^{2n_H} = \underset{H \leq G}{\prod} h^{-2n_H}_{L^H} \cdot \underset{H \leq G}{\prod} \lambda(H)^{-2n_H} \cdot \underset{H \leq G}{\prod}w^{2n_H}_{L^H}.
\end{equation}
Taking the positive square root of \eqref{above} yields the claim.
\end{proof}

\begin{rema}(\cite[\S 2]{Brauer})\label{quotientsofw}
Brauer proved that for every Galois extension $L/K$ of number fields and every $G_{L/K}$-relation $\Theta = \sum_{H \leq G_{L/K}} n_H H$, the quotient $\prod_{H \leq G_{L/K}}w_{L^{H}}^{n_H}$ is a power of $2$.
\end{rema}

Let $L$ and $G$ be as in Theorem \ref{general necessary condition}. If $L$ is totally real, then we have $w_{L^{H}}=2$ for all subgroup $H$ of $G$. Then, the quotients of $w_{L^{H}}$'s are equal to $1$ by Lemma \ref{constant}. Furthermore, if $G$ has an odd order, then we have $\lambda(H)=1$ for every subgroup $H$ of $G$. Therefore, we have the following corollary.

\begin{coro}
Let $L$ be a real Galois number field of odd degree. Let $G$ be the Galois group of $L$ over $\mathbb{Q}$. Then, the $\mathbb{Z}[G]$-lattices $E_L$ and $\mathcal{A}_{G}$ are factor equivalent if and only if we have the equality  
\begin{equation*}
\underset{H \leq G}{\prod} |H|^{n_H} = \underset{H \leq G}{\prod}h^{-n_H}_{L^H}
\end{equation*}
for all $G$-relations $\Theta=\sum_{H \leq G}n_H H$.
\end{coro}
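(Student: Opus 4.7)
The plan is to derive this corollary as an immediate specialization of Theorem \ref{general necessary condition}, by showing that under the twin hypotheses ``$L$ totally real'' and ``$[L:\mathbb{Q}]$ odd'' the two auxiliary factors $\prod_{H} w_{L^{H}}^{n_{H}}$ and $\prod_{H} \lambda(H)^{-n_{H}}$ both collapse to $1$, leaving only the class-number contribution on the right-hand side.

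For the roots-of-unity factor: since $L$ is real, every subfield $L^{H}$ is real, so $\mu(L^{H}) = \{\pm 1\}$ and $w_{L^{H}} = 2$ for every $H \leq G$. Consequently
\begin{equation*}
\prod_{H \leq G} w_{L^{H}}^{\,n_{H}} \;=\; 2^{\sum_{H} n_{H}} \;=\; 1,
\end{equation*}
where the last equality uses Lemma \ref{constant}, which tells us that $\sum_{H} n_{H} = 0$ for every $G$-relation $\Theta = \sum n_{H} H$.

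For the $\lambda$-factor: by definition $\lambda(H)$ divides the order of $H^{1}(H, \mu(L))$. Because $L$ is totally real, $\mu(L) = \{\pm 1\}$ has order $2$, while $|H|$ divides $|G|$ which is odd. The group $H^{1}(H, \mu(L))$ is annihilated both by $|H|$ and by $|\mu(L)| = 2$, and since these integers are coprime we get $H^{1}(H, \mu(L)) = 0$. Hence $\lambda(H) = 1$ for every $H \leq G$.

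Finally, I would substitute $\lambda(H) = 1$ and $w_{L^{H}} = 2$ into the equivalence of Theorem \ref{general necessary condition}: the factor $\prod_{H} (\lambda(H)/w_{L^{H}})^{-n_{H}} = \prod_{H} (1/2)^{-n_{H}} = 2^{\sum_{H} n_{H}} = 1$ vanishes, and what remains is precisely $\prod_{H} |H|^{n_{H}} = \prod_{H} h_{L^{H}}^{-n_{H}}$. There is no genuine obstacle here; the only thing worth checking carefully is the coprimality argument that kills $H^{1}(H,\mu(L))$, which makes essential use of both the ``real'' hypothesis (to bound $|\mu(L)|$) and the ``odd degree'' hypothesis (to force $\gcd(|H|, |\mu(L)|) = 1$).
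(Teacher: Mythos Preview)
Your proof is correct and follows exactly the same approach as the paper: both use Lemma \ref{constant} to kill the root-of-unity factor (via $w_{L^H}=2$ for all $H$) and observe that $\lambda(H)=1$ when $|G|$ is odd, then substitute into Theorem \ref{general necessary condition}. Your coprimality argument for the vanishing of $H^{1}(H,\mu(L))$ is a nice explicit justification of what the paper asserts without detail.
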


\begin{rema}
Theorem \ref{maintheorem} is a generalization of the necessary conditions on the quotient of class numbers of subfields for the existence of local Minkowski units that were obtained by Burns \cite[Thm. 3]{Burns2}, Duval \cite[Rem. 5.3 (a)]{duval1981structure}, and Marszalek \cite[Thm. 2.8. (b)]{Mars2}.
\end{rema}

\begin{exam}
Let $G = (\mathbb{Z}/p\mathbb{Z})^2 \rtimes \mathbb{Z}/p\mathbb{Z}$ be the Heisenberg group of order $p^3$ with $p \geq 3$.
It has the $G$-relation $$\Theta= I-IZ-J+JZ,$$ where $Z$ denotes the center of $G$, and $I$ and $J$ are two non-conjugate, non-central subgroups of order $p$. Let $L$ be a Galois extension of $\mathbb{Q}$ with Galois group $G$. Then, the factor equivalence of $E_{L}$ and $\mathcal{A}_{G}$ is subject to the following necessary condition
    \begin{equation*}
        \frac{h_{L^I}h_{L^{JZ}}}{h_{L^J}h_{L^{IZ}}}=1.
    \end{equation*}
\end{exam}

\begin{rema}
For a finite group $G$, a $G$-relation $\Theta = \sum_{H \leq G} n_H H$ is called \textit{useful} (cf. \cite{biasse2022norm}) if we have $n_{1} \neq 0$ for the trivial subgroup $1$ of $G$. For a certain useful $G$-relation $\Theta$ (cf. \cite[Assumption 1.3]{bartel2013index}), the index $$[\mathcal{M} : \sum_{n_H \cdot n_1 < 0}\mathcal{M}^H] \in \mathbb{N}$$ is finite for every $\mathbb{Z}[G]$-lattice $\mathcal{M}$. In \cite{bartel2013index}, the authors studied the relationship between this index, the rational representation $\mathbb{Q} \otimes_{\mathbb{Z}} \mathcal{M}$, and the factor equivalence class of $\mathcal{M}$ for $G$-relations satisfying \cite[Assumption 1.3]{bartel2013index}. As a consequence, if $\mathcal{M}$ is the unit lattice of a Galois extension $L$ of an admissible field $k$ where no infinite places of $k$ are ramified in $L$, then the index (for the $G$-relations satisfying \cite[Assumption 1.3]{bartel2013index}) is uniquely determined by the factor equivalence class of $\mathcal{M}$.
\end{rema}

\begin{rema}
In \cite{bartel2013index}, the authors showed that the regulator constant can be used to obtain analogous results on the integral Galois module structure of higher K-groups of number fields and the Mordell-Weil groups of elliptic curves.
\end{rema}

\section{The arithmetic properties of totally real $p$-rational number fields}\label{sec-p-rational}

The theory of factor equivalence provides a method to study the Galois module structure of unit lattices in terms of class numbers. However, applying the theory \textit{in practice} is usually difficult because class numbers are notoriously hard to compute. Burns exploited the strong arithmetic properties of \( p \)-power genus field extensions of admissible fields to study the existence of local Minkowski units (cf. \S\ref{section Theorems of Burns}). 

In the remainder of this paper, we examine the Galois module structure of unit lattices for another special family of totally real number fields, called $p$-rational fields. The $p$-rational fields were investigated in \cite{JaulentNguyen, Mova-Do, Mova2} to construct infinitely many non-abelian extensions of \( \mathbb{Q} \) satisfying Leopoldt’s conjecture at the prime \( p \). It has long been observed that many arithmetic problems become simpler when the field is $p$-rational. In the totally real case, this principle appears to be more amenable to direct treatment, since the Galois group of the maximal pro-$p$ extension unramified outside $p$ has a simpler structure (cf.~\cite[Figure~1]{GrasGreen}), which permits more straightforward methods of relating the defect of $p$-rationality to the complexity of the problems (cf.~\cite{GrasTS, GrasGreen}). Motivated by this perspective, we apply the strong arithmetic properties of $p$-rational fields to prove the non-existence of Minkowski units in non-abelian $p$-rational $p$-extensions of $\Q$ (\S\ref{Factor equivalence class of the unit groups of $p$-rational}), and to study the relative Galois module structure of unit lattices in Galois extensions of totally real $p$-rational fields (\S\ref{relGalmod}).

\vskip 5pt

The abundance of $p$-rational fields in our context is illustrated by the following two facts:
\begin{itemize}
    \item[(i)] By a theorem of Movahhedi (Theorem \ref{p-rational p-extensions}), if a number field \( F \) is \( p \)-rational, then there exists an infinite family of infinite pro-$p$ towers of \( p \)-rational \( p \)-extensions of \( F \).
    \item[(ii)] It is widely believed that a number field \( F \) is \( p \)-rational for many primes \( p \). In \cite{Gras2}, Gras even conjectured that \( F \) is \( p \)-rational for all but finitely many primes \( p \).
\end{itemize}
Thus, our results on the Galois module structure of unit lattices apply to a large family of number fields.

We will focus on the Galois module structure of unit lattices in non-cyclic Galois extensions of number fields, as the theory of factor equivalence becomes trivial when the Galois group is cyclic (cf. Proposition \ref{factorequivalenceabelian} and the remark preceding it).

\subsection{Totally real $p$-rational number fields}\label{Totally real $p$-rational number fields}

Let \( F \) be a number field and \( p \) an odd prime. We write \( F_{\infty} \) for the cyclotomic \( \mathbb{Z}_p \)-extension of \( F \). For each integer \( n \geq 0 \), let \( F_n \) denote the \( n \)th layer of the extension \( F_{\infty}/F \). We write \( H_F \) for the \( p \)-Hilbert class field of \( F \), and \( \mathfrak{h}_{F} \) for the \( p \)-class number of \( F \). For a set \( S \) of primes of \( F \), let \( F_S \) denote the maximal pro-\( p \) extension of \( F \) unramified outside \( S \), and write \( G_S(F) \) for the Galois group of \( F_S \) over \( F \). We denote by \( S_p \) the set of \( p \)-adic primes of \( F \). By local class field theory, a non-$p$-adic prime $\mathfrak{q}$ of $F$ can ramify in a pro-$p$ extension of $F$ only if its ideal norm $\mathbf{N}\mathfrak{q}$ is congruent to $1$ modulo $p$ (see \cite[\S 8.5]{Koch} for an elementary explanation). Thus, without this congruence, the situation is vacuous, and we may assume that every non-$p$-adic prime in $S$ satisfies this congruence.

In this subsection, we briefly recall some arithmetic properties of totally real \( p \)-rational number fields that will be useful later. Except for Proposition \ref{descending}, Conjecture \ref{Gras conjecture}, and Theorem \ref{p-rational p-extensions}, we assume throughout that \( F \) is totally real. For more general information on \( p \)-rational number fields, the reader is referred to \cite{Gras3, JaulentNguyen, Mova, Mova-Do, Mova2, BenMova}.

Several equivalent characterizations of $p$-rationality can be found in the literature, for example, in \cite[page 22]{Mova}. 

\begin{defi}
A number field \( F \) is said to be \( p \)-rational if one of the following equivalent conditions holds: 
\begin{enumerate}
    \item The Galois group \( G_{S_p}(F) \) is a free pro-\( p \) group,  where \( S_p \) is the set of \( p \)-adic primes of \( F \);
    \item We have an isomorphism $G_{S_p}(F)^{\ab} \simeq \mathbb{Z}_p^{c_F + 1}$, where \( c_F \) denotes the number of complex places of \( F \).
\end{enumerate}  
\end{defi}

In particular, a totally real number field \( F \) is \( p \)-rational precisely when we have
\[
G_{S_p}(F) \simeq G_{S_p}(F)^{\ab} \simeq \mathbb{Z}_p.
\]

\begin{exam}
By the Kronecker–Weber theorem, the rational number field $\mathbb{Q}$ is $p$-rational for every prime $p$.
\end{exam}

We now return to the setting of a general totally real $p$-rational field. Since $F_{\infty}$ is a subfield of $F_{S_p}$, we have \( F_{S_p} = F_{\infty} \) in this case. Therefore, $H_F$ is a subfield of $F_{\infty}$, and we obtain the following proposition.

\begin{prop}\label{pclassnumber}
Let $F$ be a totally real $p$-rational number field. Let $m$ be the largest integer such that $F_{m}/F$ is unramified. Then, we have $\mathfrak{h}_{F}=p^m$.
\end{prop}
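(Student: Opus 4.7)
The approach is to identify the $p$-Hilbert class field $H_F$ with a specific layer of the cyclotomic $\mathbb{Z}_p$-extension $F_\infty$, and then read off its degree. All of the substantive input is already assembled in the excerpt: the paragraph immediately preceding the proposition uses the $p$-rationality of the totally real field $F$ to conclude $F_{\Sigma_p}=F_\infty$, and this is the lever I would pull.

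The first step is to place $H_F$ inside $F_\infty$. The field $H_F$ is an abelian pro-$p$ extension of $F$ which is unramified at every finite prime, so \emph{a fortiori} it is unramified outside $\Sigma_p$. By definition of $F_{\Sigma_p}$ this gives $H_F \subseteq F_{\Sigma_p}$, and combined with the identity $F_{\Sigma_p}=F_\infty$ we obtain $H_F \subseteq F_\infty$. Since $F_\infty/F$ is a $\mathbb{Z}_p$-extension, its only finite intermediate fields are the layers $F_n$; hence $H_F=F_n$ for a unique integer $n\geq 0$.

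The second step is to match $n$ with $m$. On the one hand, $F_n=H_F$ is unramified over $F$, so by definition of $m$ we have $n\leq m$. On the other hand, for every $n'\leq m$ the layer $F_{n'}$ is an abelian $p$-extension of $F$ that is unramified everywhere at finite primes, hence $F_{n'}\subseteq H_F=F_n$ by maximality of the $p$-Hilbert class field, giving $m\leq n$. Thus $H_F=F_m$. Class field theory yields $[H_F:F]=h_{F,p}$ (for $p$ odd there is no discrepancy between the narrow and wide conventions, as the obstruction is a $2$-group and $F_\infty$ is totally real), and $[F_m:F]=p^m$ by definition of a $\mathbb{Z}_p$-extension; combining these gives $h_{F,p}=p^m$.

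There is no serious obstacle. The one point that deserves a sentence of care is the well-definedness of $m$, i.e.\ that the supremum of integers $n$ with $F_n/F$ unramified is finite: if $F_\infty/F$ were everywhere unramified then the infinite extension $F_\infty$ would lie in the finite extension $H_F$, which is absurd. This standard observation, together with the identification $F_{\Sigma_p}=F_\infty$, reduces the proof to the bookkeeping above.
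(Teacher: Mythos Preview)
Your proof is correct and follows essentially the same approach as the paper: the paper does not give a separate proof but simply observes in the preceding paragraph that $F_{\Sigma_p}=F_\infty$ forces $H_F\subseteq F_\infty$, from which the proposition is immediate. Your write-up just fills in the routine details (matching the layer index with $m$ and noting that $m$ is finite) that the paper leaves implicit.
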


As an immediate consequence of Proposition \ref{pclassnumber}, we can observe the following lemma. We also note in passing that, in fact, in a Galois $p$-extension of totally real $p$-rational number fields, at most one non-$p$-adic prime can be ramified, while there is no such restriction on the $p$-adic places (cf. Corollary \ref{totallyrealprational}).

\begin{lemm}\label{p-class number cyclic}
Let $L/F$ be an extension of totally real $p$-rational number fields. Then, the following claims are valid.
\begin{enumerate}
\item[(i)] We have the inequalities $\mathfrak{h}_{F} \cdot |[L:F]|_p^{-1} \leq \mathfrak{h}_{L} \leq \mathfrak{h}_{F} \cdot [L:F]$.
\item[(ii)] If $L/F$ is a cyclic extension of degree $p$ that is unramified outside $p$, then we have $\mathfrak{h}_{L} = \mathrm{max}\{\mathfrak{h}_{F}/p, 1\}$.
\item[(iii)] If $L/F$ is a cyclic extension of degree $p$ that is ramified precisely at a non-$p$-adic prime, then we have $\mathfrak{h}_{L}=\mathfrak{h}_{F}$.
\end{enumerate}
\end{lemm}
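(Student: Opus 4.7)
The key ingredient is Proposition~\ref{pclassnumber}, which gives $h_{F,p}=p^{m_F}$ and $h_{L,p}=p^{m_L}$ where $m_F$ (resp.\ $m_L$) is the largest integer with $F_{m_F}/F$ (resp.\ $L_{m_L}/L$) unramified, and $H_F = F_{m_F}$, $H_L = L_{m_L}$. The structural setup I would first establish is that $L_{\infty} = L\cdot F_{\infty}$: writing $F_e := L\cap F_{\infty}$, the compositum $L\cdot F_{\infty}/L$ is a $\mathbb{Z}_p$-extension of $L$ sitting inside $L_{\infty}$, hence equals it, and the layers line up as $L_n = L\cdot F_{e+n}$. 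In particular $p^e = [F_e:F]$ divides $[L:F]$, so $p^e \leq |[L:F]|_p$.

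For (i), the lower bound follows because $L\cdot H_F = L\cdot F_{m_F}$ is unramified over $L$ (base change preserves unramifiedness), whence $L\cdot H_F \subseteq H_L = L\cdot F_{e+m_L}$, giving $m_F - e \leq m_L$ and hence $h_{L,p}\geq h_{F,p}/|[L:F]|_p$. The upper bound requires a local ramification comparison: for any prime $\mathfrak{p}_L$ of $L$ above $\mathfrak{p}_F\mid p$, unramifiedness of $L\cdot F_n/L$ at $\mathfrak{p}_L$ yields, for any prime $\mathfrak{P}$ of $L\cdot F_n$ above $\mathfrak{p}_L$, the relation $e(\mathfrak{P}/\mathfrak{p}_F) = e(\mathfrak{p}_L/\mathfrak{p}_F)\leq [L:F]$; since $F_n\subseteq L\cdot F_n$ and $F_n/F$ is Galois, this forces $e(F_n/F,\mathfrak{p}_F)\leq [L:F]$, and as $e(F_n/F,\mathfrak{p}_F)$ is a power of $p$, in fact $\leq |[L:F]|_p$. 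Using the standard fact that in the cyclotomic $\mathbb{Z}_p$-extension each prime above $p$ is either unramified throughout or becomes totally ramified from some layer $t_{\mathfrak{p}_F}$ onward, so that $e(F_n/F,\mathfrak{p}_F) = p^{\max(0,\,n - t_{\mathfrak{p}_F})}$ and $m_F = \min_{\mathfrak{p}_F\mid p}t_{\mathfrak{p}_F}$, applying the bound at $n = e+m_L$ gives $e+m_L\leq m_F + v_p([L:F])$, hence $h_{L,p}\leq h_{F,p}\cdot |[L:F]|_p \leq h_{F,p}\cdot [L:F]$.

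Parts (ii) and (iii) specialize the above setup. In (ii), a cyclic degree-$p$ extension $L/F$ unramified outside $p$ lies in $F_{\Sigma_p} = F_{\infty}$, so $L = F_1$ and $e = 1$; if $m_F\geq 1$ then $F_1/F$ is unramified and the condition on $F_{1+m_L}/F_1$ unfolds to $m_L = m_F - 1$, while if $m_F = 0$ the ramification at the $p$-adic prime of $F$ that ramifies in $F_1/F$ persists, forcing $m_L = 0$; both cases match $\max\{h_{F,p}/p,1\}$. In (iii), since $F_{\infty}/F$ is unramified at the non-$p$-adic prime $\mathfrak{q}$ whereas $L/F$ is ramified there, one has $L\cap F_{\infty} = F$ and $e = 0$; moreover $L/F$ is unramified at every $p$-adic prime, so the local ramification comparison above becomes the equality $e(L\cdot F_n/L,\mathfrak{p}_L) = e(F_n/F,\mathfrak{p}_F)$, whence $L\cdot F_n/L$ is unramified iff $F_n/F$ is, giving $m_L = m_F$.

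The main obstacle is the upper bound in (i), which genuinely needs the local structure theorem for the cyclotomic $\mathbb{Z}_p$-extension at $p$-adic primes (unramified up to $t_{\mathfrak{p}_F}$, then totally ramified) rather than a purely global argument; once that is in hand, the remaining statements are bookkeeping about where $L\cap F_{\infty}$ lies and the ramification type of $L/F$ relative to $F_{\infty}/F$.
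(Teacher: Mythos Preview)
Your argument is correct and follows essentially the same route as the paper: both use $LH_F\subseteq H_L$ for the lower bound in (i) and the ramification bound $e(F_m/F,\mathfrak{p}_F)\leq [L:F]$ together with the eventual total ramification of $p$-adic primes in $F_\infty/F$ for the upper bound, then specialize to $L=F_1$ for (ii) and to $e=0$ with $L/F$ unramified at $p$ for (iii). Your version is just more explicit about the layer structure $L_n=L\cdot F_{e+n}$ and the parameters $t_{\mathfrak{p}_F}$, and it even yields the slightly sharper bound $h_{L,p}\leq h_{F,p}\cdot |[L:F]|_p$, but the underlying ideas are identical.
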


\begin{proof}
\begin{enumerate}
\item[(i)] The left inequality follows from the inclusion $LH_F \subseteq H_L$ and the equalities $$[LH_F:L] = [H_F: H_F \cap L] = \mathfrak{h}_F \cdot [H_F \cap L : F]^{-1}.$$ Since $H_L$ is contained in $L_{\infty}=L F_{\infty}$, there is some $m \in \mathbb{N}$ such that $H_L$ is equal to the compositum of $L$ and $F_m$. Every $p$-adic prime of $F$ has ramification index at most $[L:F]$ in the extension $H_L/F$. Hence, the degree $[F_m:F]$, which is bounded above by the product of $\mathfrak{h}_F$ and the maximal ramification index of the $p$-adic primes of $F$ in the extension $F_m/F$, is in turn bounded above by $\mathfrak{h}_F \cdot [L:F]$. Therefore, we have $\mathfrak{h}_L=[H_L:L] \leq [F_m:F] \leq \mathfrak{h}_F \cdot [L:F]$.
\item[(ii)] If $L/F$ is unramified outside $p$, then $L$ is equal to $F_1$. If $F$ has $p$-class number $1$, then at least one prime of $F$ is totally ramified in $F_{\infty}$. Hence, we have $\mathfrak{h}_F=\mathfrak{h}_L=1$. If we have $H_F \neq F$, then we have $H_F=H_L$ because $H_F/L$ is unramified and $F_{\infty}/H_F$ is totally ramified at some $p$-adic prime. Thus, we have $\mathfrak{h}_L=[H_F:L]=p^{-1}\mathfrak{h}_F$.
\item[(iii)] Let $r$ be an integer such that $H_F$ is equal to $F_{r-1}$. Then $F_r/F$ is ramified at some $p$-adic prime of $F$ say $\mathfrak{p}$. Since $\mathfrak{p}$ is unramified in $L/F$, $F_r L/L$ is ramified at the primes of $L$ above $\mathfrak{p}$. Thus, we have $H_L \subsetneq F_r L$ and consequently $H_L=H_F L$. The claim follows because $L$ and $H_F$ are linearly disjoint over $F$.
\end{enumerate}
\end{proof}


The $p$-rationality of number fields satisfies the following descending property.

\begin{prop}(cf. \cite[Thm. I.1]{Gras6}, \cite[Prop. 5 on page 30]{Mova})\label{descending}
Let $L/F$ be an extension of number fields. If $L$ is $p$-rational, then $F$ is also $p$-rational.
\end{prop}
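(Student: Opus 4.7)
The plan is to reformulate $p$-rationality cohomologically: by Galois cohomology, $F$ is $p$-rational if and only if $G_F := G_{F,\Sigma_p}$ is free pro-$p$, equivalently $\mathrm{cd}_p(G_F) \leq 1$. Since the required $\mathbb{Z}_p$-rank of $G_F^{\mathrm{ab}}$ descends from $L$ to $F$ by a standard Leopoldt-style argument, it is enough to prove the freeness of $G_F$. The idea is to find a sub-extension of both $F_{\Sigma_p}/F$ and $L_{\Sigma_p}/L$ whose Galois group can be controlled simultaneously from above (via $L$) and from below (via $F$).

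The first observation is that $F_{\Sigma_p} \subseteq L_{\Sigma_p}$, since the compositum $F_{\Sigma_p}\cdot L/L$ is a pro-$p$ extension unramified outside the primes of $L$ above $p$ and infinity. Setting $M := L \cap F_{\Sigma_p}$, the Galois correspondence yields
\[
\mathrm{Gal}(F_{\Sigma_p}\cdot L / L) \simeq \mathrm{Gal}(F_{\Sigma_p}/M) = G_M,
\]
so $G_M$ appears simultaneously as an open subgroup of $G_F$ (corresponding to the finite pro-$p$ extension $M/F$) and as a quotient of $G_L$. Moreover, because $M/F$ is a finite pro-$p$ extension unramified outside $\Sigma_{F,p}$, one checks $F_{\Sigma_p} = M_{\Sigma_p}$. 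By Serre's theorem on preservation of $p$-cohomological dimension under open subgroups of pro-$p$ groups, $G_F$ is free pro-$p$ if and only if $G_M$ is. Hence the problem reduces to proving that $M$ is $p$-rational.

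The main obstacle is to deduce the freeness of $G_M$ from that of $G_L$ using only the surjection $G_L \twoheadrightarrow G_M$, since quotients of free pro-$p$ groups need not be free. The approach is to analyse the short exact sequence
\[
1 \to \mathrm{Gal}(L_{\Sigma_p}/L\cdot F_{\Sigma_p}) \to G_L \to G_M \to 1
\]
by means of the Hochschild--Serre spectral sequence with $\mathbb{F}_p$-coefficients. The construction $M = L \cap F_{\Sigma_p}$ ensures that $L$ and $F_{\Sigma_p}$ are linearly disjoint over $M$, so the extension $L/M$ provides a complementary closed subgroup inside $\mathrm{Gal}(L_{\Sigma_p}/M)$ for the normal subgroup corresponding to $F_{\Sigma_p}/M$; this makes a restriction-corestriction argument on $H^2(G_M, \mathbb{F}_p)$ available. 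When $[L:M]$ is prime to $p$, the composition is invertible and forces $H^2(G_M, \mathbb{F}_p) \hookrightarrow H^2(G_L, \mathbb{F}_p) = 0$; when $[L:M]$ is a power of $p$, one must further exploit the local cohomological vanishing at $\Sigma_{L,p}$ implied by the freeness of $G_L$ to control the low-degree terms of the spectral sequence and conclude $H^2(G_M, \mathbb{F}_p) = 0$. This gives the freeness of $G_M$, and hence of $G_F$, completing the proof.
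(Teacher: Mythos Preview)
The paper does not give its own proof of this proposition; it is quoted with references to Gras and Movahhedi. Your proposal must therefore be judged on its own merits, and it has a genuine gap.

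The assertion that ``the extension $L/M$ provides a complementary closed subgroup inside $\mathrm{Gal}(L_{\Sigma_p}/M)$ for the normal subgroup corresponding to $F_{\Sigma_p}/M$'' is unjustified and in fact false in general. A complement to $\mathrm{Gal}(L_{\Sigma_p}/F_{\Sigma_p})$ would force $L\cdot F_{\Sigma_p}=L_{\Sigma_p}$, i.e.\ the surjection $G_L\twoheadrightarrow G_M$ would have to be an isomorphism; but whenever $c_L>c_F$ the group $G_L$ is free pro-$p$ of rank $c_L+1>c_F+1\geq \operatorname{rk}G_M$, so the kernel $\mathrm{Gal}(L_{\Sigma_p}/L\cdot F_{\Sigma_p})$ is nontrivial. (Even before that, $\mathrm{Gal}(L_{\Sigma_p}/M)$ need not be a group at all, since $L/M$ is not assumed Galois.) Without the complement no restriction--corestriction is available: $G_M$ is a \emph{quotient} of $G_L$, the natural map on $H^{2}$ is inflation, and inflation into $H^{2}(G_L,\mathbb{F}_p)=0$ says nothing about $H^{2}(G_M,\mathbb{F}_p)$. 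Your five-term sequence only identifies $H^{2}(G_M,\mathbb{F}_p)$ with the cokernel of $H^{1}(G_L,\mathbb{F}_p)\to H^{1}(N,\mathbb{F}_p)^{G_M}$, and the phrase ``exploit the local cohomological vanishing at $\Sigma_{L,p}$'' is a statement of intent, not an argument for that surjectivity. As you yourself observe, freeness does not descend along quotient maps of pro-$p$ groups, and nothing in the proposal overcomes this obstruction.

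The proofs in the cited references avoid the quotient problem entirely: they work with the arithmetic characterization of $p$-rationality via the torsion module $\mathcal{T}_F=\operatorname{tors}(G_{F,\Sigma_p}^{\mathrm{ab}})$ and the Leopoldt defect, together with the class-field-theoretic transfer maps relating $\mathcal{T}_F$ and $\mathcal{T}_L$; equivalently, via Poitou--Tate duality the relevant $H^{2}$ becomes a Tate--Shafarevich-type kernel whose injectivity under base change is what one actually proves.
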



By Propositions \ref{pclassnumber} and \ref{descending}, we can study the \( p \)-part of the quotients of class numbers of subfields appearing in Theorem \ref{maintheorem}, and investigate the factor equivalence class of \( \mathbb{Z}_p \otimes_{\mathbb{Z}} E_F \) when \( F \) is a totally real \( p \)-rational Galois extension of \( \mathbb{Q} \). Therefore, understanding the existence of Galois extensions of totally real \( p \)-rational fields with various Galois groups is also of interest in the study of the Galois module structure of unit lattices.

For finite \( p \)-groups, one can apply the results of Movahhedi \cite{Mova2} on the ascent of \( p \)-rationality in Galois \( p \)-extensions of number fields. However, the problem for general groups remains poorly understood. Some recent progress has been made for groups of the form \( (\mathbb{Z}/2\mathbb{Z})^t \) for \( t \geq 1 \) (cf.~\cite{chattopadhyay2023p, Greenberg, triquadratic, Maire2023}).

We also record the following conjecture due to Gras, which indicates that studying $p$-rational fields can yield results of a rather general nature for number fields.

\begin{conjecture}\label{Gras conjecture}(\cite{Gras2})
A number field is $p$-rational for all but finitely many primes.
\end{conjecture}

\vskip 5pt

To state the theorem of Movahhedi (Theorem \ref{p-rational p-extensions}), we first recall the notion of a primitive set of places (cf. \cite{Gras7, Mova}). Let $F$ be a number field. Let $F_{S_p}(1)$ denote the maximal elementary abelian extension of $F$ in $F_{S_p}$. Hence, if $F$ is $p$-rational, then the Galois group $\mathrm{Gal}(F_{S_p}(1)/F)$ of $F_{S_p}(1)$ over $F$ is isomorphic to $ (\mathbb{Z}/p\mathbb{Z})^{c_{F}+1}$ as a vector space over the finite field $\mathbb{Z}/p\mathbb{Z}$.


\begin{defi}
Let $F$ be a number field. Let $S$ be a finite set of finite places of $F$ containing $S_{p}$. The set $S$ is called \textit{primitive} for $(F,p)$ if the set of Frobenius automorphisms in $\mathrm{Gal}(F_{S_p}(1)/F)$ at the finite non-$p$-adic primes of $S$ are linearly independent over $\mathbb{Z}/p\mathbb{Z}$.
\end{defi}

With this notion, we can now state the following theorem.

\begin{theo}(\cite[Thm.2]{Mova2})\label{p-rational p-extensions}
Let $F$ be a $p$-rational number field. Let $L$ be a Galois $p$-extension of $F$. Then, $L$ is $p$-rational if and only if the set $\Ram(L/F) \cup S_{p}$ is primitive for $(F,p)$, where $\Ram(L/F)$ denotes the set of primes ramified in $L/F$.
\end{theo}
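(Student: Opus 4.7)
My plan is to analyze the pro-$p$ Galois group $\mathcal{G} := G_{F,\Sigma}$ with $\Sigma = S_{L/F} \cup \Sigma_p$, and relate its structure both to the $p$-rationality of $L$ and to the primitivity condition. Writing $t = |S_{L/F} \setminus \Sigma_p|$, the central claim I would first establish is the cohomological criterion: \emph{$\mathcal{G}$ is a free pro-$p$ group of rank $c_F + 1 + t$ if and only if $\Sigma$ is primitive for $(F,p)$.} This would be proved by computing $H^1(\mathcal{G}, \mathbb{F}_p)$ and $H^2(\mathcal{G}, \mathbb{F}_p)$ via the Koch--Shafarevich formulas. Since $F$ is $p$-rational and $p$ is odd (so $\mu_p \not\subseteq F$), the Euler characteristic computation plus Poitou--Tate duality show that enlarging the ramification set from $\Sigma_p$ to $\Sigma$ adds $t$ new tame inertia generators together with potential relations; the primitivity of $\Sigma$ (i.e., $\mathbb{F}_p$-linear independence of the Frobenius elements at the non-$p$-adic primes of $\Sigma$ in $G_{F_{\Sigma_p}(1)/F}$, which is $\mathbb{F}_p$-dual to the local terms entering Poitou--Tate) is precisely what forces those potential relations to vanish and keeps $H^2(\mathcal{G}, \mathbb{F}_p) = 0$.

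Granting this criterion, both directions of the theorem follow from the same descent. Since $L_{\Sigma_{L,p}}$ is a pro-$p$ extension of $L$ unramified outside primes above $\Sigma_p \subseteq \Sigma$, we have $L_{\Sigma_{L,p}} \subseteq F_\Sigma$, so $H := G_{F_\Sigma/L}$ is an open subgroup of $\mathcal{G}$. If $\Sigma$ is primitive then $\mathcal{G}$, and hence $H$ by the Nielsen--Schreier theorem for free pro-$p$ groups, is free pro-$p$, of rank given by the Schreier index formula. The group $G_{L,\Sigma_{L,p}}$ is then the quotient of $H$ by the closed normal subgroup generated by the inertia subgroups at the primes of $L$ above $S_{L/F} \setminus \Sigma_p$; a direct $\mathbb{Z}_p$-rank computation (using $c_L = [L:F] \, c_F$, which holds because $p$ is odd so real places of $F$ stay real in the pro-$p$ extension $L/F$) shows the quotient is free pro-$p$ of rank exactly $c_L + 1$, so $L$ is $p$-rational. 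For the converse, if $\Sigma$ is not primitive then Step~1 produces $H^2(\mathcal{G}, \mathbb{F}_p) \neq 0$, and the same open-subgroup/quotient machinery propagates this to a non-trivial obstruction in $H^2(G_{L,\Sigma_{L,p}}, \mathbb{F}_p)$, forcing $L$ not to be $p$-rational.

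The main obstacle is the cohomological calculation underlying the first step: translating the concrete primitivity condition (linear independence of Frobenius in the elementary abelian layer $G_{F_{\Sigma_p}(1)/F}$) into the vanishing of $H^2(\mathcal{G}, \mathbb{F}_p)$ requires invoking Poitou--Tate duality together with the $p$-rationality of $F$ in exactly the right way, and the congruence condition $N\mathfrak{q} \equiv 1 \pmod p$ on tame primes (Remark \ref{tameprime}) must be used to ensure each such prime actually contributes a nonzero inertia generator. Once that step is in place, the descent via Nielsen--Schreier and inertia-quotient is technical bookkeeping but essentially routine.
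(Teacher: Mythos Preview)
The paper does not give its own proof of this theorem: it is quoted verbatim as a result of Movahhedi \cite[Thm.~2]{Mova2} and used as a black box (see the sentence immediately preceding the statement and the remark after it pointing to \cite{Gras6,Gras5}). So there is no in-paper argument to compare against; your task is really to reconstruct Movahhedi's proof.

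On your proposal itself, two points deserve attention. First, the parenthetical ``since $p$ is odd, $\mu_p\not\subseteq F$'' is not valid at the stated level of generality: the theorem is for arbitrary $p$-rational $F$, and $F=\mathbb{Q}(\zeta_p)$ is a perfectly good $p$-rational field containing $\mu_p$. The cohomological bookkeeping in the $\mu_p\subseteq F$ case is genuinely different (self-duality of $\mathbb{F}_p$, extra local terms), and Movahhedi treats it. Second, and more seriously, in your forward direction you pass from ``$H=G_{F_\Sigma/L}$ is free pro-$p$ by Nielsen--Schreier'' to ``its quotient $G_{L,\Sigma_{L,p}}$ by the tame inertia subgroups is free pro-$p$ of rank $c_L+1$'' via ``a direct $\mathbb{Z}_p$-rank computation''. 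A rank computation determines $\dim H^1$ of the quotient, not the vanishing of $H^2$; quotients of free pro-$p$ groups are essentially never free unless the elements you kill form part of a minimal generating system, and that is exactly what has to be proved here. Your converse has the mirror problem: non-vanishing of $H^2(\mathcal{G},\mathbb{F}_p)$ does not formally ``propagate'' to $H^2$ of a quotient of an open subgroup. The clean way around both issues---and what Movahhedi actually does---is to work directly with the cohomology of $G_{L,\Sigma_{L,p}}$ (not of $\mathcal{G}$) via the Poitou--Tate sequence for $L$, and to identify the primitivity condition for $(F,p)$ with the corresponding condition for $(L,p)$ using that Frobenius elements are compatible under the surjection $G_{L_{\Sigma_p}(1)/L}\to G_{F_{\Sigma_p}(1)/F}$; the equivalence with $p$-rationality of $L$ then falls out of Shafarevich's criterion without ever needing to control quotients of free groups.
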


\begin{rema}
The readers can also refer to \cite{Gras6,Gras5} for a class field theoretic approach on the ascent of $p$-rationality under the Leopoldt conjecture at $p$.
\end{rema}


\begin{coro}\label{totallyrealprational}
Let $F$ be a totally real $p$-rational number field. For a non-$p$-adic prime $\mathfrak{q}$ of $F$, the set $S_p \cup \{\mathfrak{q}\}$ is primitive for $(F,p)$ if and only if $\mathfrak{q}$ does not split in $F_1$. It follows that if $L$ is a Galois $p$-extension of $F$, then $L$ is $p$-rational if and only if there exists such a prime $\mathfrak{q}$ with $\Ram(L/F) \subseteq S_p \cup \{\mathfrak{q}\}.$
\end{coro}

\begin{rema}
If $F$ is $p$-rational, then it is easily seen that $p$-rationality ascends in the pro-$p$ tower $F_{S_p}/F$, since every closed subgroup of a free pro-$p$ group is free (cf. \cite[Cor. 3 on page 31]{Serre-GC}). In comparison, Theorem \ref{p-rational p-extensions} addresses the ascent in larger pro-$p$ towers.
\end{rema}


By Chebotarev's density theorem, Theorem \ref{p-rational p-extensions} ensures the existence of infinitely many towers of \( p \)-rational \( p \)-extensions of a number field \( F \), provided that \( F \) itself is \( p \)-rational.

\vskip 3pt

Moreover, when \( F \) is \( p \)-rational and the set \( S \) is primitive for the pair \( (F, p) \), the structure of the Galois group \( G_S(F) \) is well understood, thanks to a theorem of Movahhedi. We state Movahhedi’s theorem below in the case where \( F \) is totally real.

\begin{prop}(\cite[Thm. 3.3]{Mova-Do})\label{Demushkin}
Let $F$ be a totally real $p$-rational number field. Let $\mathfrak{q}$ be a non-$p$-adic prime of $F$ such that $S=S_{p} \cup \{\mathfrak{q}\}$ is primitive for $(F,p)$. Then, $G_{S}(F)$ is the Demu\v{s}kin group of rank $2$ with minimal presentation
\begin{equation*}
    \langle \,\, \sigma, \tau \,\, | \,\, \tau^{\mathbf{N}\mathfrak{q}-1}[\tau,\sigma]=1 \,\,\rangle,
\end{equation*}
where $\mathbf{N}\mathfrak{q}$ is the ideal norm of the prime ideal $\mathfrak{q}$.
\end{prop}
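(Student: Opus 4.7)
The plan is to show that $G := G_{F, \Sigma_p \cup \{\mathfrak{q}\}}$ is a one-relator pro-$p$ group on two generators whose single relation is the local Demu\v{s}kin relation at $\mathfrak{q}$. The strategy is twofold: first count generators and relations via Galois cohomology, then match $G$ with the local pro-$p$ Galois group at $\mathfrak{q}$ through the inclusion $F \hookrightarrow F_{\mathfrak{q}}$.

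For the generator count $d(G) = \dim_{\mathbb{F}_p} H^1(G, \mathbb{F}_p)$: since $F$ is totally real and $p$-rational, $G_{F,\Sigma_p} \cong \mathbb{Z}_p$ accounts for exactly one $\mathbb{Z}/p$-extension unramified outside $\Sigma_p$. Class field theory combined with the hypothesis $\mathrm{N}\mathfrak{q} \equiv 1 \pmod p$ (cf.\ Remark \ref{tameprime}) permits at most one further independent $\mathbb{Z}/p$-extension ramified at $\mathfrak{q}$. The primitivity assumption on $\Sigma_p \cup \{\mathfrak{q}\}$, which in the totally real $p$-rational setting amounts to $\mathrm{Frob}_{\mathfrak{q}}$ being nontrivial in $\mathrm{Gal}(F_{\Sigma_p}(1)/F) \cong \mathbb{F}_p$ (equivalently, $\mathfrak{q}$ being inert in $F_\infty/F$), guarantees this new extension actually occurs. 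Hence $d(G) = 2$. For $r(G) = \dim H^2(G, \mathbb{F}_p)$, I would apply Tate's global Euler--Poincar\'e characteristic formula, valid since $p$-rationality of $F$ implies Leopoldt's conjecture at $p$. Substituting $r_1(F) = [F:\mathbb{Q}]$, $r_2(F) = 0$, and the single non-$p$-adic ramified prime $\mathfrak{q}$ yields $r(G) = 1$.

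With $d(G) = 2$ and $r(G) = 1$, it remains to check that the cup product on $H^1(G, \mathbb{F}_p)$ is non-degenerate and to pin down the explicit presentation. I would extract non-degeneracy from the Poitou--Tate nine-term exact sequence, which reduces the global statement to the known non-degeneracy of the local cup products at primes in $\Sigma_p \cup \{\mathfrak{q}\}$; here the $p$-rationality of $F$ forces the relevant Tate--Shafarevich obstruction to vanish. For the presentation, I would use the inclusion-induced map $\varphi \colon G_{F_{\mathfrak{q}}}(p) \to G$. Since $\mathrm{N}\mathfrak{q} \equiv 1 \pmod p$ forces $\zeta_p \in F_{\mathfrak{q}}$, standard local Galois theory identifies $G_{F_{\mathfrak{q}}}(p)$ as a Demu\v{s}kin group of rank $2$ with the target presentation $\langle \sigma, \tau \mid \tau^{\mathrm{N}\mathfrak{q} - 1}[\tau, \sigma] = 1 \rangle$, with $\tau$ a generator of tame inertia and $\sigma$ a Frobenius lift. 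The images of $\sigma$ and $\tau$ generate $G$ modulo its Frattini subgroup (by the generator count together with primitivity), so $\varphi$ is surjective; equality of relation numbers then forces $\varphi$ to be an isomorphism, transporting the presentation.

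The most delicate step will be verifying that the single relation of $G$ in $H^2(G, \mathbb{F}_p)$ is exactly the image of the local Demu\v{s}kin relation at $\mathfrak{q}$, rather than some combination involving the $p$-adic places or a hidden global relation. This is where $p$-rationality of $F$ is essential: the freeness of $G_{F,\Sigma_p}$ ensures there are no relations contributed from the $p$-adic part, so the lone relation of $G$ must originate at $\mathfrak{q}$ and therefore must match the local Demu\v{s}kin relation under $\varphi$.
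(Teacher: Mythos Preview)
The paper does not prove this proposition; it is quoted from \cite[Thm.~3.3]{Mova-Do} with no argument given. Your outline is essentially the standard route to such a result and is close in spirit to Movahhedi's original proof: compute $d(G)$ and $r(G)$, then identify $G$ with the local pro-$p$ Galois group at $\mathfrak{q}$ via the decomposition map.

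Two comments on the sketch. First, primitivity is not what gives $d(G)=2$; that already follows from $\mathrm{N}\mathfrak{q}\equiv 1\pmod p$ alone (cf.\ Proposition~\ref{elementaryabelianS}, which assumes no primitivity). Primitivity enters exactly where you later use it: to guarantee that the images of $\sigma$ and $\tau$ are independent in $G/\Phi(G)$, i.e.\ that $\varphi$ is surjective. Indeed, under the projection $G/\Phi(G)\twoheadrightarrow G_{F,\Sigma_p}/\Phi\simeq\mathbb{F}_p$ the inertia generator $\tau$ maps to zero while $\sigma$ maps to $\mathrm{Frob}_{\mathfrak{q}}$, and primitivity says precisely that the latter is nonzero.

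Second, and more substantively, the assertion that surjectivity of $\varphi$ together with ``equality of relation numbers forces $\varphi$ to be an isomorphism'' is not valid as stated: a surjection of pro-$p$ groups with matching $d$ and $r$ need not be an isomorphism. What you actually need is that $\varphi^{*}\colon H^{2}(G,\mathbb{F}_p)\to H^{2}(G_{F_{\mathfrak{q}}}(p),\mathbb{F}_p)$ is injective. Granting this, the five-term inflation--restriction sequence (using that $\varphi^{*}$ is already an isomorphism on $H^{1}$) gives $H^{1}(\ker\varphi,\mathbb{F}_p)^{G}=0$, and since $G$ is pro-$p$ acting on an $\mathbb{F}_p$-space this forces $\ker\varphi=1$. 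You do flag this as the delicate point in your final paragraph, and your heuristic that ``freeness of $G_{F,\Sigma_p}$ means the lone relation originates at $\mathfrak{q}$'' is exactly the injectivity of $\varphi^{*}$ on $H^{2}$ in disguise; but it should be argued as an $H^{2}$-statement (via the localization sequence comparing $H^{2}(G_{F,\Sigma_p\cup\{\mathfrak{q}\}})$ to $H^{2}(G_{F,\Sigma_p})=0$, or via Poitou--Tate as you suggest), not deduced from a bare comparison of relation counts.
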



By Proposition \ref{Demushkin}, the group $G_S(F)$ is a Demu\v{s}kin group of rank $2$. This yields information on the $G$-relations needed to apply Proposition \ref{factor-regulatorconstant ppart} to the $\mathbb{Z}_p[G_{L/F}]$-lattice $\mathbb{Z}_p \otimes_{\mathbb{Z}} E_L$, where $L/F$ is a Galois $p$-extension contained in $F_S$ (cf. \S \ref{Factor equivalence class of the unit groups of $p$-rational}). Moreover, the group structure can be exploited to obtain more precise information on the $p$-class group (cf. \S \ref{uniquepprime}).

\begin{prop}(\cite[Lem. 2.5]{Mova-Do})\label{elementaryabelianS}
Let $F$ be a totally real $p$-rational number field. Let $S$ be a finite set of primes of $F$ containing $S_{p}$. Then we have
\begin{equation*}
    G_{S}(F)^{\ab} \simeq \mathbb{Z}_{p} \times \underset{v}{\prod}\, \mathbb{Z}/\mu_{p}(F_{v})\mathbb{Z},
\end{equation*}
where $v$ runs over the finite non-$p$-adic primes of $S$, and $\mu_{p}(F_{v})$ denotes the number of $p$-power roots of unity in the completion $F_v$ of $F$ at $v$.
\end{prop}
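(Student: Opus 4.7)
The plan is to bootstrap the explicit description of $G_{F,\Sigma_p}^{\ab}$ available under the hypotheses (totally real plus $p$-rational gives $G_{F,\Sigma_p}^{\ab}\simeq\mathbb{Z}_p^{c_F+1}=\mathbb{Z}_p$) to a description of $G_{F,S}^{\ab}$ for $S\supseteq\Sigma_p$, by analyzing the inertia contributions at the finitely many additional primes $T:=S\setminus\Sigma_p$. The natural restriction map $\pi\colon G_{F,S}^{\ab}\twoheadrightarrow G_{F,\Sigma_p}^{\ab}\simeq\mathbb{Z}_p$ is surjective, and its kernel equals the closed subgroup generated by the images of the local inertia subgroups $I_v$ for $v\in T$, since $F_{\Sigma_p}^{\ab}$ is by definition the maximal subextension of $F_S^{\ab}/F$ unramified outside $\Sigma_p$.

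For each $v\in T$, local class field theory implies that the abelian pro-$p$ inertia of $\mathrm{Gal}(\overline{F_v}/F_v)$ is tame (since $v\nmid p$) and cyclic of order equal to the $p$-part of $Nv-1$, which is precisely $\mu_p(F_v)$. Consequently there is a surjection $\bigoplus_{v\in T}\mathbb{Z}/\mu_p(F_v)\mathbb{Z}\twoheadrightarrow\ker\pi$. Since $\mathbb{Z}_p$ is free as a pro-$p$ abelian group, the short exact sequence $1\to\ker\pi\to G_{F,S}^{\ab}\to\mathbb{Z}_p\to 1$ splits, yielding a decomposition $G_{F,S}^{\ab}\simeq\mathbb{Z}_p\oplus\ker\pi$.

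The main obstacle is to show that the above surjection onto $\ker\pi$ is actually an isomorphism, equivalently that the local inertia images at distinct $v\in T$ are linearly disjoint and each attains its full local order. I would settle this by an $\mathbb{F}_p$-rank count. Applying the Poitou-Tate duality sequence together with Tate's global Euler-Poincaré formula for $\mathbb{F}_p$ as a $G_{F,S}$-module, and using the cohomological characterization of $p$-rationality (the group $G_{F,\Sigma_p}$ is free pro-$p$, so $H^2(G_{F,\Sigma_p},\mathbb{F}_p)=0$, and the obstruction to extending this vanishing from $\Sigma_p$ to $S$ is accounted for exactly by the primes of $T$ with $p\mid Nv-1$), one deduces
$$\dim_{\mathbb{F}_p}\bigl(G_{F,S}^{\ab}\otimes\mathbb{F}_p\bigr)=1+\#\{v\in T\colon p\mid Nv-1\}.$$
This matches exactly the $\mathbb{F}_p$-rank of $\mathbb{Z}_p\oplus\bigoplus_{v\in T}\mathbb{Z}/\mu_p(F_v)\mathbb{Z}$, so the surjection must be an isomorphism, proving the claim.
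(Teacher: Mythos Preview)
The paper does not supply its own proof of this proposition; it is quoted directly from \cite[Lem.~2.5]{Mova-Do}. So there is no ``paper's proof'' to compare against, and your write-up is being judged on its own merits.

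Your overall architecture is sound: the restriction $\pi\colon G_{F,S}^{\ab}\twoheadrightarrow G_{F,\Sigma_p}^{\ab}\simeq\mathbb{Z}_p$ has kernel generated by tame inertia at $T=S\setminus\Sigma_p$, each local inertia is cyclic of order $\mu_p(F_v)$, and the sequence splits because $\mathbb{Z}_p$ is projective. The problem is the final step. You claim that matching $\mathbb{F}_p$-ranks forces the surjection
\[
\bigoplus_{v\in T}\mathbb{Z}/\mu_p(F_v)\mathbb{Z}\;\twoheadrightarrow\;\ker\pi
\]
to be an isomorphism. This is false for finite abelian $p$-groups: the surjection $\mathbb{Z}/p^2\mathbb{Z}\to\mathbb{Z}/p\mathbb{Z}$ induces an isomorphism modulo $p$ but is not injective. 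Your rank computation shows only that the map is an isomorphism on Frattini quotients, hence (by Nakayama) surjective---which you already knew. It says nothing about whether some inertia image collapses from order $p^k$ to order $p^{k-1}$.

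To close the gap you need to control orders, not just generator counts. One route is to replace the $\mathbb{F}_p$-coefficient computation by a $\mathbb{Z}/p^n\mathbb{Z}$-coefficient (or $\mathbb{Q}_p/\mathbb{Z}_p$-coefficient) computation for all $n$, using that $p$-rationality gives $H^2(G_{F,\Sigma_p},\mathbb{Q}_p/\mathbb{Z}_p)=0$ and then tracking the exact local terms in the Poitou--Tate sequence. Alternatively, one can argue directly via idelic class field theory: the $p$-rationality of $F$ forces the global units and $p$-class group contributions to vanish in the relevant quotient, so $G_{F,S}^{\ab}$ is exactly $\mathbb{Z}_p$ times the product of local unit groups $U_v^{(1)}/(U_v^{(1)})^{p^\infty}\simeq\mathbb{Z}/\mu_p(F_v)\mathbb{Z}$ for $v\in T$, with no collapsing. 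This is essentially what is carried out in \cite{Mova-Do}.
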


In particular, Proposition \ref{elementaryabelianS} implies the following corollary, which may also be understood from the fact that every open subgroup of a Demu\v{s}kin group of rank $2$ has generator rank $2$.

\begin{coro}\label{elementaryabelian}
Let $\mathfrak{q}$ be a finite non-$p$-adic prime of $F$ that does not split in $F_{\infty}$. Then, the maximal elementary abelian extension of $F$ in $F_{S_p \cup \{\mathfrak{q}\}}$ is the compositum of $F_{1}$ and a cyclic $p$-extension of $F$ in which $\mathfrak{q}$ is ramified (note that $p$ may also ramify in that cyclic extension).
\end{coro}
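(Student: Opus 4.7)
The plan is to read the statement directly off Proposition \ref{elementaryabelianS} applied to $S = \Sigma_p \cup \{\mathfrak{q}\}$, which gives
\begin{equation*}
G_{F,S}^{\ab} \simeq \mathbb{Z}_p \times \mathbb{Z}/\mu_p(F_\mathfrak{q})\mathbb{Z},
\end{equation*}
so that if $M \subseteq F_{\Sigma_p \cup \{\mathfrak{q}\}}$ denotes the maximal elementary abelian extension of $F$, then
\begin{equation*}
\mathrm{Gal}(M/F) \simeq G_{F,S}^{\ab}/p \simeq \mathbb{Z}/p\mathbb{Z} \, \times \, \mathbb{Z}/\gcd(p,\mu_p(F_\mathfrak{q}))\mathbb{Z}.
\end{equation*}
First I would dispose of the degenerate case $\mathrm{N}\mathfrak{q} \not\equiv 1 \pmod{p}$: then $\mu_p(F_\mathfrak{q}) = 1$ and by Remark \ref{tameprime} no pro-$p$ extension of $F$ can ramify at $\mathfrak{q}$, so $F_{\Sigma_p \cup \{\mathfrak{q}\}} = F_{\Sigma_p} = F_\infty$ and $M = F_1$, which we can write as the degenerate compositum $F_1 \cdot F$.

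The substantive case is $\mathrm{N}\mathfrak{q} \equiv 1 \pmod{p}$, where $\mathrm{Gal}(M/F) \simeq (\mathbb{Z}/p\mathbb{Z})^2$ has order $p^2$. The canonical surjection $G_{F,S} \twoheadrightarrow G_{F,\Sigma_p}$, obtained by killing tame inertia at $\mathfrak{q}$, identifies the $\mathbb{Z}_p$-factor in Proposition \ref{elementaryabelianS} with $G_{F,\Sigma_p}^{\ab} = \mathrm{Gal}(F_\infty/F)$; reducing modulo $p$, this factor cuts out $F_1$ inside $F_\infty$, so $F_1 \subseteq M$. The complementary factor $\mathbb{Z}/\mu_p(F_\mathfrak{q})\mathbb{Z}$ is the image of that inertia in $G_{F,S}^{\ab}$, and its reduction modulo $p$ cuts out a cyclic degree-$p$ sub-extension $K/F$ of $M$ in which $\mathfrak{q}$ must ramify (while $p$ may or may not ramify in $K$, depending on the behaviour of the $p$-adic decomposition inside $M$). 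Since $F_\infty/F$ is unramified at $\mathfrak{q}$ while $K/F$ is ramified there, we have $K \not\subseteq F_\infty$, so in particular $K \neq F_1$. The two distinct subgroups of order $p$ in $\mathrm{Gal}(M/F) \simeq (\mathbb{Z}/p\mathbb{Z})^2$ that correspond to $F_1$ and $K$ intersect trivially, and hence $F_1 \cdot K = M$.

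The only delicate ingredient is justifying that the abstract isomorphism supplied by Proposition \ref{elementaryabelianS} is the canonical one: that the $\mathbb{Z}_p$-factor genuinely is the quotient by tame inertia at $\mathfrak{q}$ (so that it is identified with $\mathrm{Gal}(F_\infty/F)$), and that the $\mathbb{Z}/\mu_p(F_\mathfrak{q})\mathbb{Z}$-factor genuinely is the image of this inertia. This is not formal from the existence of a direct product decomposition; one has to trace through the class field theoretic construction in \cite{Mova-Do} to read off that the splitting provided there is the natural one. Once that compatibility is in place, the corollary follows immediately. The inertness of $\mathfrak{q}$ in $F_\infty$ is not strictly used in the argument above; it is imposed here because this section studies elementary abelian extensions within $p$-rational pro-$p$ towers, for which inertness is forced by Corollary \ref{totallyrealprational}.
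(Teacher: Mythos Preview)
Your proposal is correct and follows the paper's intended route: the paper offers no proof beyond the sentence ``In particular, Proposition~\ref{elementaryabelianS} implies the following corollary,'' and your argument is precisely the unpacking of that implication.

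One remark: the concern you flag as the ``delicate ingredient'' --- tracing through \cite{Mova-Do} to verify that the splitting in Proposition~\ref{elementaryabelianS} is the canonical one --- is unnecessary. Once you know $[M:F]=p^{2}$ and $F_{1}\subset M$, simply take \emph{any} cyclic degree-$p$ subextension $K\neq F_{1}$ of $M$. If $\mathfrak{q}$ were unramified in $K$, then $K$ would lie in $F_{\Sigma_{p}}=F_{\infty}$, forcing $K=F_{1}$ (the unique degree-$p$ layer of $F_{\infty}$), a contradiction. Hence $\mathfrak{q}$ ramifies in $K$, and $M=F_{1}\cdot K$ follows from $K\neq F_{1}$ inside $(\mathbb{Z}/p\mathbb{Z})^{2}$. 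This bypasses any need to identify the abstract factors with inertia images.
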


By the Burnside Basis theorem, Corollary \ref{elementaryabelian} implies the following.

\begin{lemm}(\cite[Thm. 2]{Mova2})\label{nonsplitting}
Let $F$ be a totally real $p$-rational number field and $\mathfrak{q}$ a non-$p$-adic prime such that $S_{p} \cup \{\mathfrak{q}\}$ is primitive for $(F,p)$. Then, $\mathfrak{q}$ does not split in $F_{S_{p} \cup \{\mathfrak{q}\}}$. In this situation, for any finite extension $L$ of $F$ contained in $F_{S_p \cup \{\mathfrak{q}\}}$, we denote by $\mathfrak{q}_L$ the unique prime of $L$ above $\mathfrak{q}$.
\end{lemm}

For a totally real $p$-rational field $F$ and a finite non-$p$-adic prime $\mathfrak{q}$ of $F$ such that 
$S_p \cup \{\mathfrak{q}\}$ is primitive for $(F,p)$, we will frequently consider finite extensions 
$L/F$ contained in the tower $F_{S_p \cup \{\mathfrak{q}\}}$. 
By Corollary~\ref{elementaryabelian}, for each such extension $L$, there exists a unique elementary 
abelian extension of $L$ in $F_{S_p \cup \{\mathfrak{q}\}}$. 
Since these extensions play a central role, we fix the following notation.

For the base field $F$, we denote by $F^{\el}_{p, \mathfrak{q}}$ the maximal elementary abelian extension 
of $F$ contained in $F_{S_p \cup \{\mathfrak{q}\}}$. By definition and Lemma~\ref{nonsplitting}, we have
\[
F_{S_p \cup \{\mathfrak{q}\}} = L_{S_p \cup \{\mathfrak{q}_L\}} 
\quad \text{and} \quad
\mathrm{Gal}(F_{S_p \cup \{\mathfrak{q}\}}/L) = G_{S_p \cup \{\mathfrak{q}_L\}}(L).
\]
Hence, we define $L^{\el}_{p, \mathfrak{q}_L}$ analogously, and, for simplicity, write $L^{\el}_{p, \mathfrak{q}}$ instead. 
By Corollary~\ref{elementaryabelian}, this is the unique extension of $L$ in $F_{S_p \cup \{\mathfrak{q}\}}$ with $G_{L_{p, \mathfrak{q}}^{\el}/L} \simeq (\Z/p\Z)^2.$

Since the base field $F$ and the prime $\mathfrak{q}$ are clear from the context, the shorthand $L_{p, \mathfrak{q}}^{\el}$ will cause no ambiguity. In particular, we obtain the following field diagram.


\begin{equation*}
\begin{tikzcd}[row sep=large]
  & & F_{S_p \cup \{\mathfrak{q}\}} = L_{S_p \cup \{\mathfrak{q}_L\}} \\
  & & \\
  & & {L^{\mathrm{el}}_{p,\mathfrak{q}_L}:=L^{\el}_{p, \mathfrak{q}}} 
      \arrow[uu, no head] 
      \arrow[uu, "\Phi(G_{S_p \cup \{\mathfrak{q}_L\}}(L))"', no head, bend right=49] \\
  & L 
      \arrow[ru, no head] 
      \arrow[ruuu, "G_{S_p \cup \{\mathfrak{q}_L\}}(L)" description, no head, bend left] 
      \arrow[ru, "(\mathbb{Z}/p\mathbb{Z})^2"', no head, bend right=49] & \\
  F 
      \arrow[ru, no head] 
      \arrow[rruuuu, "G_{S_p \cup \{\mathfrak{q}\}}(F)" description, no head, bend left=60] & & 
\end{tikzcd}
\end{equation*}
where $\Phi(G_{S_p \cup \{\mathfrak{q}_L\}}(L))$ denotes the Frattini subgroup of $G_{S_p \cup \{\mathfrak{q}_L\}}(L)$.


\subsection{On the $p$-class numbers in a pro-$p$ tower of totally real $p$-rational number fields when $p$ does not split}\label{uniquepprime}

We have seen in Proposition \ref{pclassnumber} and Lemma \ref{p-class number cyclic} that the $p$-class numbers of totally real $p$-rational fields are relatively easy to analyze. When there is a unique $p$-adic prime, these results can be refined further. This refinement will play an important role in the proof of Theorem \ref{maintheorem2} in the next section. In order to make this refinement precise, we now consider a totally real $p$-rational field $F$ together with a finite non-$p$-adic prime $\mathfrak{q}$, and introduce the following hypothesis on the triple $(F, p, \mathfrak{q})$.

\begin{equation*}\label{condU}
\tag{U}
S_p \cup \{\mathfrak{q}\} \text{ is primitive for } (F,p),
\text{and } p \text{ does not split in } F_{S_p \cup \{ \mathfrak{q}\}}.
\end{equation*}

By the Burnside basis theorem, the triple \((F, p, \mathfrak{q})\) satisfies condition \eqref{condU} precisely when $F$ has a unique $p$-adic prime and the local degree of this prime in the extension \(F_{p,\mathfrak{q}}^{\el}\) is \(p^2\). In this subsection, we obtain a structural result on the \(p\)-class numbers of number fields in the tower \(F_{S_p \cup \{\mathfrak{q}\}}/F\) for those \((F, p, \mathfrak{q})\) satisfying \eqref{condU}, by exploiting the inertia subgroup of \(G_{S_p \cup \{\mathfrak{q}\}}(F)\) at the unique \(p\)-adic place of \(F_{S_p \cup \{\mathfrak{q}\}}\). For each number field \(F \subseteq L \subset F_{S_p \cup \{\mathfrak{q}\}}\), we denote by \(p_L\) the unique prime of \(L\) lying above \(p\).

The inertia subgroup of \(G_{S_p \cup \{\mathfrak{q}\}}(F)\) at the unique $p$-adic place of $F_{S_p \cup \{\mathfrak{q}\}}$ corresponds to the maximal tamely ramified extension \(F_{\{\mathfrak{q}\}}\) of \(F\) contained in \(F_{S_p \cup \{\mathfrak{q}\}}\).

\begin{lemm} (cf. \cite[Thm. 1.1]{LeeLim})
Let $F$ be a totally real $p$-rational number field. Let $\mathfrak{q}$ be a non-$p$-adic prime of $F$ such that $S_p \cup \{\mathfrak{q}\}$ is primitive for $(F,p)$. Then, the extension $F_{\{\mathfrak{q}\}}/F$ is finite.
\end{lemm}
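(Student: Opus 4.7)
The plan is to use the explicit Demu\v{s}kin-group structure on $G := G_{F,\Sigma_{p}\cup\{\mathfrak{q}\}}$ to realise $G_{F,\{\mathfrak{q}\}}$ as a finite quotient of $G$. By the primitivity hypothesis, Theorem~\ref{p-rational p-extensions} and Proposition~\ref{Demushkin} give that $G$ is a rank-$2$ Demu\v{s}kin pro-$p$ group with presentation $\langle \sigma, \tau \mid \tau^{\mathrm{N}\mathfrak{q}-1}[\tau,\sigma]\rangle$. Since $F_{\{\mathfrak{q}\}}$ is the maximal subextension of $F_{\Sigma_{p}\cup\{\mathfrak{q}\}}/F$ that is unramified at every prime above $p$, we have $G_{F,\{\mathfrak{q}\}} = G/N$, where $N \trianglelefteq G$ is the closed normal subgroup generated by the inertia subgroups at all primes of $F_{\Sigma_{p}\cup\{\mathfrak{q}\}}$ above $p$.

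Next I would locate $N$ in $G^{\ab}$. By Proposition~\ref{elementaryabelianS}, $G^{\ab} \simeq \mathbb{Z}_{p}\oplus\mathbb{Z}/p^{a}\mathbb{Z}$ with $p^{a}=|\mu_{p}(F_{\mathfrak{q}})|$. The Demu\v{s}kin relation forces the image of $\tau$ to lie in the torsion summand (which is generated by the class of the inertia at $\mathfrak{q}$), while $\sigma$ generates the free $\mathbb{Z}_{p}$-summand, which is the Galois group of the cyclotomic $\mathbb{Z}_{p}$-extension $F_{\infty}/F$. Since $F$ is totally real and $p$-rational, $F_{\infty}/F$ is totally ramified at every $p$-adic prime of $F$, so the image of $N$ in $G^{\ab}$ contains the whole $\mathbb{Z}_{p}$-summand; in particular, there exists some $\alpha \in N$ with $\alpha \equiv \sigma \pmod{[G,G]}$.

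To finish, I would upgrade this to the inclusion $[G,G] \subseteq N$. Rewriting the Demu\v{s}kin relation as $\sigma\tau\sigma^{-1}=\tau^{\mathrm{N}\mathfrak{q}}$ realises $G$ as a semidirect product $\mathbb{Z}_{p}\rtimes\mathbb{Z}_{p}$, and a direct computation in this model yields $[\tau,\alpha] = \tau^{-(\mathrm{N}\mathfrak{q}-1)}$ for every $\alpha \in G$ whose image in $G^{\ab}$ equals $\sigma$, independently of the choice of lift. Since $\tau^{-(\mathrm{N}\mathfrak{q}-1)} = [\tau,\sigma]$ topologically generates $[G,G]$ as a closed normal subgroup, we obtain $[G,G]\subseteq N$. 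Combined with the earlier step, $N$ then contains the full preimage of the $\mathbb{Z}_{p}$-summand of $G^{\ab}$, so $G/N$ is a quotient of $\mathbb{Z}/p^{a}\mathbb{Z}$. In particular $G/N$ is finite, and thus $F_{\{\mathfrak{q}\}}/F$ is a finite extension.

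The main technical input is the commutator identity $[\tau,\alpha]=\tau^{-(\mathrm{N}\mathfrak{q}-1)}$ and its independence of the lift $\alpha$: this is what lets the argument proceed without needing to pin down precisely which element of $N$ maps to $\sigma$ in $G^{\ab}$. Everything else is a matter of assembling the structural facts on totally real $p$-rational fields, cyclotomic $\mathbb{Z}_{p}$-extensions, and Demu\v{s}kin groups already established above.
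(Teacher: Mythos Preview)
Your approach is genuinely different from the paper's. The paper argues by contradiction via the structure theory of powerful pro-$p$ groups: since $G_{F,\{\mathfrak{q}\}}$ is a quotient of the rank-$2$ Demu\v{s}kin group $G_{F,\Sigma_p\cup\{\mathfrak{q}\}}$, it is powerful of generator rank at most $2$; if it were infinite, an open uniformly powerful subgroup would admit a $\mathbb{Z}_p$-quotient, so $F_{\{\mathfrak{q}\}}$ would contain a $\mathbb{Z}_p$-extension of a number field, contradicting the classical fact that every $\mathbb{Z}_p$-extension of a number field is ramified at some $p$-adic prime. Your route through the explicit semidirect-product model $\mathbb{Z}_p\rtimes\mathbb{Z}_p$ is more hands-on and, when it works, yields an explicit bound on $|G_{F,\{\mathfrak{q}\}}|$; the paper's argument is shorter and entirely conceptual.

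That said, your argument contains a genuine gap. The assertion that ``$F_\infty/F$ is totally ramified at every $p$-adic prime of $F$'' is false in general: by Proposition~\ref{pclassnumber} the $p$-Hilbert class field $H_F$ lies inside $F_\infty$, so whenever $h_{F,p}>1$ the extension $F_\infty/F$ has a nontrivial initial unramified piece. Hence you cannot conclude that the image of $N$ in $G^{\ab}$ contains the class of $\sigma$; all you know (for instance by class field theory, since the maximal abelian pro-$p$ extension of $F$ unramified at every $p$-adic prime is finite) is that this image has finite index. Thus one only obtains $\alpha\in N$ of the form $\sigma^{c}\tau^{b}$ with some $c\in\mathbb{Z}_p\setminus\{0\}$, and then $[\tau,\alpha]=\tau^{\,1-(\mathrm{N}\mathfrak{q})^{c}}$ topologically generates a possibly \emph{proper} finite-index subgroup of $[G,G]=\langle\tau^{\,\mathrm{N}\mathfrak{q}-1}\rangle$; the step ``$[G,G]\subseteq N$'' is therefore not justified as written. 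The fix is easy: from $\tau^{\,1-(\mathrm{N}\mathfrak{q})^{c}}\in N$ and $\sigma^{c}\tau^{b}\in N$ one sees that both $\bar\sigma$ and $\bar\tau$ have finite order in $G/N$, which suffices for finiteness since every element of $G$ is of the form $\sigma^{x}\tau^{y}$. But you should replace the total-ramification claim by the weaker finite-index statement and adjust the commutator step accordingly.
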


\begin{proof}
Since $G_{\{\mathfrak{q}\}}(F)$ is a quotient of the Demu\v{s}kin group $G_{S_p \cup \{\mathfrak{q}\}}(F)$, it is powerful (cf.~\cite[Chap.~3]{analytic}) with generator rank at most $2$. By \cite[Thm.~8.32]{analytic}, $G_{\{\mathfrak{q}\}}(F)$ admits an open uniformly powerful subgroup $\mathcal{U}$. By \cite[Thm.~3.8]{analytic}, the generator rank of $\mathcal{U}$ is at most $2$. 
If $G_{\{\mathfrak{q}\}}(F)$ is infinite, then so is $\mathcal{U}$, and in this case $\mathcal{U}$ admits a quotient isomorphic to $\Z_p$ (cf.~\cite[Exercise~3.11]{analytic}). It follows that if $F_{\{\mathfrak{q}\}}/F$ were infinite, then $F_{\{\mathfrak{q}\}}$ would contain a $\Z_p$-extension of a finite extension of $F$, which is impossible since in any $\Z_p$-extension of a number field at least one $p$-adic prime must ramify. Therefore $F_{\{\mathfrak{q}\}}/F$ is finite.
\end{proof}

\begin{prop}\label{pclassnumber-inertiasubgroup}
Suppose that $(F,p, \mathfrak{q})$ satisfies \eqref{condU}. Then, we have $\mathfrak{h}_L=1$ for every number field $L$ with $F_{\{\mathfrak{q}\}} \subseteq L \subset F_{S_p \cup \{\mathfrak{q}\}}$.
\end{prop}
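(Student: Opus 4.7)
The plan is to show that the cyclotomic $\mathbb{Z}_p$-extension $L_\infty/L$ is totally ramified at the unique prime $p_L$ of $L$ above $p$; Proposition \ref{pclassnumber} will then force $h_{L,p}=1$. To apply that proposition, I first need $L$ to be totally real and $p$-rational. Since $L \subseteq F_{\Sigma_p \cup \{\mathfrak{q}\}}$ is a finite $p$-extension of $F$, the Galois closure $\widetilde{L}$ of $L$ over $F$ inside $F_{\Sigma_p \cup \{\mathfrak{q}\}}$ is a Galois $p$-extension of $F$ with $S_{\widetilde{L}/F} \cup \Sigma_p \subseteq \Sigma_p \cup \{\mathfrak{q}\}$. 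The latter set is primitive for $(F,p)$ by hypothesis, so Theorem \ref{p-rational p-extensions} makes $\widetilde{L}$ $p$-rational, and Proposition \ref{descending} transfers this to $L$. Moreover, Remark \ref{nonsplitting} yields $L_\infty \subseteq L_{\Sigma_p \cup \{\mathfrak{q}_L\}} = F_{\Sigma_p \cup \{\mathfrak{q}\}}$.

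The decisive step is a group-theoretic observation. Let $G := G_{F, \Sigma_p \cup \{\mathfrak{q}\}}$ and let $I_p \leq G$ be the inertia subgroup at the unique prime $\mathfrak{P}$ of $F_{\Sigma_p \cup \{\mathfrak{q}\}}$ above $p$; this is well-defined by hypothesis (U). Since $F_{\{\mathfrak{q}\}}$ is the maximal subextension of $F_{\Sigma_p \cup \{\mathfrak{q}\}}/F$ in which $p$ is unramified, it coincides with the fixed field $F_{\Sigma_p \cup \{\mathfrak{q}\}}^{I_p}$. Writing $H := G_{F_{\Sigma_p \cup \{\mathfrak{q}\}}/L}$, the hypothesis $F_{\{\mathfrak{q}\}} \subseteq L$ is thus equivalent to $H \subseteq I_p$. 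Since $\mathfrak{P}$ is still the unique prime of $F_{\Sigma_p \cup \{\mathfrak{q}\}}$ above $p_L$, the inertia subgroup of $H$ at $\mathfrak{P}$ equals $H \cap I_p = H$.

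Finally, let $H' \trianglelefteq H$ be the subgroup fixing $L_\infty$, so that $H/H' \cong \mathrm{Gal}(L_\infty/L) \cong \mathbb{Z}_p$. Under the quotient map $H \twoheadrightarrow H/H'$, the inertia in $L_\infty/L$ at $p_L$ is the image of the inertia of $H$ at $\mathfrak{P}$, i.e. the image of $H$ itself, which is all of $H/H'$. Hence $L_\infty/L$ is totally ramified at $p_L$, so $L_1/L$ is already ramified; Proposition \ref{pclassnumber} then forces $m=0$ and $h_{L,p}=1$. The main thing to keep track of is the identification $F_{\{\mathfrak{q}\}} = F_{\Sigma_p \cup \{\mathfrak{q}\}}^{I_p}$ together with the standard functoriality of inertia subgroups under Galois quotients; everything else is immediate from the primitivity hypothesis and the results recalled earlier in the paper.
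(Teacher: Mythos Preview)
Your proof is correct and follows essentially the same route as the paper: both identify $F_{\{\mathfrak{q}\}}$ with the fixed field of the inertia group $I_p$ at the unique $p$-adic prime, deduce that $H = G_{F_{\Sigma_p \cup \{\mathfrak{q}\}}/L} \subseteq I_p$, conclude that $p_L$ is totally ramified above $L$ (hence in $L_1/L$), and invoke Proposition~\ref{pclassnumber}. You add the explicit verification that $L$ is $p$-rational via the Galois closure and Theorem~\ref{p-rational p-extensions}/Proposition~\ref{descending}, which the paper leaves implicit but is indeed needed for Proposition~\ref{pclassnumber} to apply.
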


\begin{proof}
Let $L$ be a number field as above. Then $p_L$ is totally ramified in $F_{S_p \cup \{ \mathfrak{q}\}}$ because $\mathrm{Gal}(F_{S_p \cup \{\mathfrak{q}\}}/L)$ is a subgroup of the inertia subgroup of $G_{S_p \cup \{\mathfrak{q}\}}(F)$ at the $p$-adic place of $F_{S_p \cup \{\mathfrak{q}\}}$. Since $p_L$ is ramified in $L_1$, the conclusion follows from Proposition \ref{pclassnumber}.
\end{proof}

\begin{exam}\label{Upq}
Note that $(\mathbb{Q},p,q)$ satisfies \eqref{condU} if and only if both $[ \, p,q \,]$ and $[ \, q,p \,]$ are not divisible by $p$ (cf. Remark \ref{criterionofBurns}). For example, the triple $(\mathbb{Q}, 7, 71)$ satisfies \eqref{condU}. In that case, the $p$-class number is $1$ along the tower $\mathbb{Q}_{S_p \cup \{q\}}/\mathbb{Q}_{\{q\}}$ by Proposition \ref{pclassnumber-inertiasubgroup}.
\end{exam}

\begin{rema}
It may be difficult to generalize Proposition \ref{pclassnumber-inertiasubgroup} to a pro-$p$ tower $F_{S_p \cup \{\mathfrak{q}\}}/F$ in which $p$ splits, since one need to take into account all the inertia subgroups of $G_{S_p \cup \{\mathfrak{q}\}}(F)$ at the $p$-adic places. For example, one may recall that the $p$-class field tower of $F$ is the subfield of $F_{S_{p}}$ fixed by the inertia subgroups of $G_{S_p}(F)$ at the $p$-adic places.
\end{rema}

\begin{coro}\label{totally ramified}
Suppose that $(F,p, \mathfrak{q})$ satisfies \eqref{condU}. Let $L$ be a finite extension of $F$ in $F_{S_p \cup \{\mathfrak{q}\}}$. The extension $F_{S_p \cup \{\mathfrak{q}\}}/L$ is totally ramified at $p_L$ if and only if $F_{\{\mathfrak{q}\}}$ is a subfield of $L$.
\end{coro}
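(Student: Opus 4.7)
The plan is to exploit the unique $p$-adic prime $\mathfrak{P}$ of $F_{\Sigma_p\cup\{\mathfrak{q}\}}$ guaranteed by hypothesis (U), together with elementary Galois correspondence. By (U), $p$ is inert in $F_{\Sigma_p\cup\{\mathfrak{q}\}}$, so for every intermediate number field $L$ the prime $p_L$ has a unique prolongation $\mathfrak{P}$ to $F_{\Sigma_p\cup\{\mathfrak{q}\}}$. Consequently the decomposition subgroup of $G_{F_{\Sigma_p\cup\{\mathfrak{q}\}}/L}$ at $\mathfrak{P}$ is all of $G_{F_{\Sigma_p\cup\{\mathfrak{q}\}}/L}$, and the statement that $F_{\Sigma_p\cup\{\mathfrak{q}\}}/L$ is totally ramified at $p_L$ becomes equivalent to the assertion that the inertia subgroup at $\mathfrak{P}$ equals the full group $G_{F_{\Sigma_p\cup\{\mathfrak{q}\}}/L}$.

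With that reformulation in hand, I would let $I_p \subseteq G_{F, \Sigma_p\cup\{\mathfrak{q}\}}$ denote the inertia subgroup at $\mathfrak{P}$. The discussion just before the corollary identifies the fixed field of $I_p$ as precisely $F_{\{\mathfrak{q}\}}$ (this is the content of the statement that $F_{\{\mathfrak{q}\}}$ is the maximal tamely ramified — equivalently, unramified at $p$ — subextension of $F_{\Sigma_p\cup\{\mathfrak{q}\}}/F$). Setting $H := G_{F_{\Sigma_p\cup\{\mathfrak{q}\}}/L}$, the standard functoriality of inertia groups under restriction yields that the inertia of $H$ at $\mathfrak{P}$ is $H \cap I_p$. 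Thus total ramification of $F_{\Sigma_p\cup\{\mathfrak{q}\}}/L$ at $p_L$ is equivalent to $H \cap I_p = H$, i.e. to $H \subseteq I_p$. By the Galois correspondence for $G_{F, \Sigma_p\cup\{\mathfrak{q}\}}$, this inclusion reverses on fixed fields, giving $F_{\{\mathfrak{q}\}} = (F_{\Sigma_p\cup\{\mathfrak{q}\}})^{I_p} \subseteq (F_{\Sigma_p\cup\{\mathfrak{q}\}})^{H} = L$, which is exactly the desired condition.

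The only delicate point is that $G_{F,\Sigma_p\cup\{\mathfrak{q}\}}$ is a profinite (Demu\v{s}kin) group rather than a finite one, so one should briefly note that the relevant Galois correspondence is the one for closed subgroups and that the inertia subgroup at a prolongation of a prime in an infinite pro-$p$ extension is well-defined and behaves functorially under restriction. Both facts are standard once the unique prolongation of $p$ is in place (which is the entire content of assumption (U) that is used), so no further arithmetic input is required beyond (U) and Galois theory.
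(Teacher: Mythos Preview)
Your argument is correct and is essentially the same as the paper's: both identify $F_{\{\mathfrak{q}\}}$ with the fixed field of the inertia subgroup $I_p$ at the unique $p$-adic prime and then use the Galois correspondence. The paper phrases the forward implication via the compositum $F_{\{\mathfrak{q}\}}L/L$ being unramified at $p_L$ (forcing $F_{\{\mathfrak{q}\}}L=L$) and declares the converse trivial, whereas you handle both directions uniformly through the equivalence $H\subseteq I_p \Longleftrightarrow F_{\{\mathfrak{q}\}}\subseteq L$; these are the same idea.
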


\begin{proof}
Suppose that $p_L$ is totally ramified in $F_{S_p \cup \{\mathfrak{q}\}}/L$. Then we have $F_{\{\mathfrak{q}\}}L=L$ because $F_{\{\mathfrak{q}\}}L/L$ is unramified at $p_L$. The sufficiency is trivial.
\end{proof}

Now, we study the $p$-class numbers in a cyclic extension $L/K$ of number fields with $[L:K]=p$ contained in $F_{S_p \cup \{\mathfrak{q}\}}/F$ for a triple $(F,p,\mathfrak{q})$ satisfying \eqref{condU}. By (ii) and (iii) of Lemma \ref{p-class number cyclic}, it remains to treat the case where $L/K$ is ramified both at $p_K$ and $\mathfrak{q}_K$.

\begin{prop}\label{cyclic pq ramified}
Suppose that $(F,p,\mathfrak{q})$ satisfies \eqref{condU}. Let $L/K$ be a cyclic extension of number fields of degree $p$ contained in $F_{S_p \cup \{\mathfrak{q}\}}/F$, ramified at both $p_K$ and $\mathfrak{q}_K$. Then, we have
\begin{equation*}
    \mathfrak{h}_L = \begin{cases} \mathfrak{h}_K & \text{if $F_{\{\mathfrak{q}\}} \subseteq H_K$,} \\
    p \cdot \mathfrak{h}_K & \text{otherwise.}
    \end{cases}
\end{equation*}
\end{prop}

\begin{proof}
For all integers $m \geq 0$, we have $L_m = L K_m$ because $L/K$ is ramified at $\mathfrak{q}_K$. By the $p$-rationality of $K$, there is some integer $n \geq 0$ such that we have $H_K=K_n$. Then, $\mathfrak{h}_L$ is equal to one of $p^n$ and $p^{n+1}$ by Lemma \ref{p-class number cyclic} (i).

By Proposition \ref{pclassnumber}, we have $\mathfrak{h}_L = p^{n+1}$ if and only if $L_{n+1}/L_{n}$ is unramified. Since $K_n/K$ is unramified, both $p_{K_n}$ and $\mathfrak{q}_{K_n}$ are ramified in $L_n/K_n$. Since the ramification index of $\mathfrak{q}_{K_n}$ in $L_{n+1}/K_n$ is $p$, we can check that $G_{L_{n+1}/K_{n}}$ is isomorphic to $(\mathbb{Z}/p\mathbb{Z})^{2}$. In particular, $L_{n+1}$ is equal to $(K_n)^{\el}_{p,\mathfrak{q}}$.

If $L_{n+1}/L_{n}$ is ramified at $p_{L_n}$, then $G_{L_{n+1}/K_{n}}$ coincides with the inertia subgroup at $p_{L_{n+1}}$. By the Burnside basis theorem, this occurs if and only if $p_{K_n}$ is totally ramified in $F_{S_p \cup \{\mathfrak{q}\}}$ (cf. \cite[Chap. II, Thm. 10.7]{Neukirch}). Hence, the claim follows from Corollary \ref{totally ramified}.
\end{proof}

Even though condition \eqref{condU} may appear restrictive, we can find many triples $(F,p,\mathfrak{q})$ satisfying \eqref{condU} under the Gras Conjecture (Conjecture \ref{Gras conjecture}). Let $F$ be a totally real cyclic extension of $\mathbb{Q}$. According to the conjecture, there are expected to be infinitely many rational primes
$p$ such that
\begin{itemize}
    \item $F$ is $p$-rational,
    \item $p\nmid [F:\mathbb{Q}]$,
    \item $p$ does not split in $F$.
\end{itemize}
In the following proposition, we assume that $p$ is such a prime, so that $F$ is $p$-rational. Examples of this type can be found in \cite{Lim22}.

\begin{prop}\label{existence 6.2}
Let $F$ and $p$ be as above. Let $q$ be a rational prime such that $(\mathbb{Q},p,q)$ satisfies \eqref{condU} (cf. Example \ref{Upq}). Then, for every prime $\mathfrak{q}$ of $F$ above $q$, the hypothesis \eqref{condU} is satisfied by $(F,p,\mathfrak{q})$.
\end{prop}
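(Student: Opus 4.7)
The plan is to verify the two defining conditions of hypothesis (U) for $(N,p,\mathfrak{q})$: primitivity of $\Sigma_{p}\cup\{\mathfrak{q}\}$ for $(N,p)$, and inertness of $p$ in $N_{\Sigma_{p}\cup\{\mathfrak{q}\}}$.

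For primitivity, $p$-rationality of $N$ combined with $p\nmid[N:\mathbb{Q}]$ identifies $N_{\Sigma_p}(1)=N_1$ with $N\mathbb{Q}_1$, and linear disjointness of $N$ and $\mathbb{Q}_1$ over $\mathbb{Q}$ yields a restriction isomorphism $\mathrm{Gal}(N_1/N)\cong\mathrm{Gal}(\mathbb{Q}_1/\mathbb{Q})$ carrying $\mathrm{Frob}_{\mathfrak{q}}$ to $\phi_q^{f}$, where $f$ is the residue degree of $\mathfrak{q}$ over $q$. Primitivity of $(\mathbb{Q},p,q)$ forces $\phi_q$ to have order $p$; since $f\mid[N:\mathbb{Q}]$ is coprime to $p$, $\phi_q^f$ is non-trivial.

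For inertness, the Burnside-basis reduction recorded after the definition of (U) makes it enough to prove that $p_N$ is inert in $N_{p,\mathfrak{q}}^{\el}$; by the just-established primitivity and Proposition \ref{Demushkin}, this is an elementary abelian extension of $N$ of rank $2$. Let $I\subseteq D\subseteq G:=\mathrm{Gal}(N_{p,\mathfrak{q}}^{\el}/N)$ denote inertia and decomposition at $p_N$. Because $p$ is inert (hence unramified) in $N/\mathbb{Q}$ while $(\mathbb{Q}_1)_p/\mathbb{Q}_p$ is totally ramified, $N_1/N$ is totally ramified at $p_N$, forcing $|I|\geq p$. If $|I|=p^2$ we are done; otherwise $|I|=p$, and $M_0:=(N_{p,\mathfrak{q}}^{\el})^{I}$ is a cyclic degree-$p$ extension of $N$ unramified at $p_N$ and outside $\{\mathfrak{q}\}$, and it remains to show that $\mathrm{Frob}_{p_N}$ is non-trivial in $\mathrm{Gal}(M_0/N)$.

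To establish this, I will enlarge the ramification set to the full $\mathrm{Gal}(N/\mathbb{Q})$-orbit $\Phi:=\Sigma_p\cup\{\mathfrak{q}_1,\ldots,\mathfrak{q}_g\}$ of $\mathfrak{q}=\mathfrak{q}_1$. Proposition \ref{elementaryabelianS}, combined with $p\mid q-1\mid q^f-1$ (which follows from primitivity of $(\mathbb{Q},p,q)$), exhibits the $\mathbb{F}_p$-space $G_{N,\Phi}^{\ab}/p$ with basis $\chi_{\mathrm{cyc}},\chi_1,\ldots,\chi_g$, where $\chi_{\mathrm{cyc}}$ corresponds to $N_1/N$ and each $\chi_i$ corresponds to a cyclic degree-$p$ extension $M_0^{(i)}$ of $N$ ramified only at $\mathfrak{q}_i$ (with $M_0^{(1)}=M_0$ and the $M_0^{(i)}$ being $\mathrm{Gal}(N/\mathbb{Q})$-conjugates). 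A cyclic degree-$p$ extension $L_0\subset\mathbb{Q}_{\Sigma_p\cup\{q\}}$ unramified at $p$ exists because $p\mid q-1$, and the inertness of $p$ in $\mathbb{Q}_{p,q}^{\el}$ combined with total ramification of $p$ in $\mathbb{Q}_1/\mathbb{Q}$ forces $p$ to be inert in $L_0/\mathbb{Q}$; thus the associated character $\chi_{\mathbb{Q}}$ satisfies $\chi_{\mathbb{Q}}(\mathrm{Frob}_p)\neq 0$. The pullback $\chi_{\mathrm{all}}=\chi_{\mathbb{Q}}|_{G_N}$ corresponds to $L_0N/N$ and is $\mathrm{Gal}(N/\mathbb{Q})$-invariant and unramified at $p_N$, so $\chi_{\mathrm{all}}=a\sum_i\chi_i$ for some $a\in\mathbb{F}_p^\times$. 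Since $p$ is inert in $N$, $\mathrm{Gal}(N/\mathbb{Q})$ fixes $p_N$, and therefore all $\chi_i(\mathrm{Frob}_{p_N})$ coincide with $\chi_1(\mathrm{Frob}_{p_N})$, whereas $\chi_{\mathrm{all}}(\mathrm{Frob}_{p_N})=n\,\chi_{\mathbb{Q}}(\mathrm{Frob}_p)\neq 0$ with $n=[N:\mathbb{Q}]$. Consequently $ag\,\chi_1(\mathrm{Frob}_{p_N})\neq 0$, and since $g=n/f$ is coprime to $p$, $\chi_1(\mathrm{Frob}_{p_N})\neq 0$ as required. The main obstacle is precisely this last step, where the $\mathrm{Gal}(N/\mathbb{Q})$-equivariance of the character decomposition in $G_{N,\Phi}^{\ab}/p$ and the local datum at $p$ inherited from the base field must be combined.
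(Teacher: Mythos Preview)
Your argument is correct and follows the same route as the paper's: both reduce via the Burnside basis theorem to the local behaviour of $p_N$ in $N_{p,\mathfrak{q}}^{\el}$, pass to the full $\mathrm{Gal}(N/\mathbb{Q})$-orbit of $\mathfrak{q}$, and derive a contradiction from the fact that the degree-$p$ subfield $L_0\subset\mathbb{Q}(\zeta_q)$ has $p$ inert (the paper phrases this with the compositum $\mathfrak{N}=\prod_i M_0^{(i)}\supseteq NL_0$, you phrase it dually with characters). One small remark: the step ``$\chi_{\mathrm{all}}=a\sum_i\chi_i$'' tacitly uses that $\mathrm{Gal}(N/\mathbb{Q})$ permutes the $\chi_i$ themselves rather than just the lines $\mathbb{F}_p\chi_i$, but this is harmless since the weaker fact $\chi_{\mathrm{all}}\in\mathrm{span}(\chi_1,\ldots,\chi_g)$ together with $\chi_i(\mathrm{Frob}_{p_N})=\chi_1(\mathrm{Frob}_{p_N})$ already yields the conclusion.
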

\begin{proof}

Let $S_{q}$ be the set of primes of $F$ above $q$. Since $p$ is prime to $[F:\mathbb{Q}]$, every element $\mathfrak{q}$ of $S_{q}$ does not split in $F_{\infty}=F\mathbb{Q}_{\infty}$ by the primitivity of the set $S_p \cup \{q\}$. Hence, the set $S_{p} \cup \{\mathfrak{q} \}$ is primitive for $(F,p)$ for every $\mathfrak{q} \in S_{q}$ by Corollary \ref{totallyrealprational}.

It remains to show that the unique $p$-adic prime $\mathfrak{p}$ of $F$ does not split in $F_{S_p \cup \{\mathfrak{q}\}}$ for every $\mathfrak{q} \in S_{q}$. By the Burnside basis theorem, this happens if and only if $\mathfrak{p}$ does not split in $F_{p, \mathfrak{q}}^{\el}$ (cf. \cite[Chap. II, Prop. 9.6]{Neukirch}). Since $[F:\mathbb{Q}]$ is prime to $p$, $\mathfrak{p}$ is totally ramified in $F_{\infty}/F$. In particular, we have $\mathfrak{h}_F=1$. Therefore, the ramification index of $\mathfrak{p}$ in $F_{p,\mathfrak{q}}^{\el}/F$ is at least $p$, and $\mathfrak{p}$ splits in $F_{p,\mathfrak{q}}^{\el}$ only if there exists a cyclic extension $F(\mathfrak{q})$ of $F$ of degree $p$ in which $\mathfrak{p}$ splits and $\mathfrak{q}$ is ramified. Such an extension $F(\mathfrak{q})$ is unique if it exists.

Since $G_{F/\mathbb{Q}}$ acts transitively on $S_{q}$, the pro-$p$ extensions $\{ F_{S_p \cup \{\mathfrak{q}\}} \}_{\mathfrak{q} \in S_{q}}$ are conjugate to each other over $\mathbb{Q}$. Hence, if $F(\mathfrak{q})$ exists for some $\mathfrak{q} \in S_{q}$, then $F(\mathfrak{q})$ exists for every $\mathfrak{q} \in S_{q}$. Let $\mathscr{F}$ be the compositum of the fields $F(\mathfrak{q})$ for all $\mathfrak{q} \in S_{q}$. Then, $\mathfrak{p}$ splits completely in $\mathscr{F}$.

On the other hand, by class field theory and the triviality of the $p$-class group of $F$, $\mathscr{F}$ contains the compositum of $F$ and the subfield $\mathscr{K}$ of $\mathbb{Q}(\zeta_q)$ with $[\mathscr{K}:\mathbb{Q}]=p$. Since $p$ does not split in $\mathbb{Q}_{S_p \cup \{q\}}$, the residue class degree of $p$ in $\mathscr{K}$ is $p$. Hence, the residue class degree of $\mathfrak{p}$ in $\mathscr{F}$ is divisible by $p$, a contradiction.
\end{proof}

\begin{rema}\label{infinitude pq}
In fact, for every odd prime $p$, there exist infinitely many primes $q$ such that $(\mathbb{Q},p,q)$ satisfy \eqref{condU}. For a proof, we refer the readers to the application of the Gras-Munnier theorem in \cite{BLM}.
\end{rema}

\section{Non-existence of Minkowski units in non-abelian $p$-rational\\ $p$-extensions of $\mathbb{Q}$}\label{Factor equivalence class of the unit groups of $p$-rational}

In this section, we will prove Theorem \ref{maintheorem2}. Let $F$ be a non-abelian $p$-rational Galois $p$-extension of $\mathbb{Q}$. Since $G_{F/\mathbb{Q}}$ is a $p$-group, we may apply the theorem of Tornehave and Bouc on the generators of $G$-relations of finite $p$-groups $G$. We shall show that the factor equivalence class of $E_{F}$ can be analyzed via Galois extensions $L/K$ of subfields of $F$ with $G_{L/K} \simeq (\mathbb{Z}/p\mathbb{Z})^2$. To begin, let us recall the theorem of Tornehave and Bouc.

\begin{theo}(cf. \cite[Thm. 5.3]{Bartel-Dokchitser15}, \cite[Cor. 6.16]{Bouc})\label{Bouc's theorem}
Let $G$ be a finite $p$-group. Then, all $G$-relations are $\mathbb{Z}$-linear combinations of ones of the form $\Ind_{H}^{G}\Inf_{H/B}^{H}\Theta$ for pairs $(H/B, \Theta)$ of subquotients $H/B$ of $G$ and $H/B$-relations $\Theta$ of the following types:
\begin{enumerate}
    \item[(i)] $H/B \simeq (\mathbb{Z}/p\mathbb{Z})^2$ with the $H/B$-relation $\Theta$
    \begin{equation*}
        1 - \underset{C}{\sum}\, C + p \cdot H/B,
    \end{equation*}
    where $C$ runs over all the subgroups of $H/B$ of order $p$.
    
    \item[(ii)] $H/B$ is the Heisenberg group of order $p^{3}$ and $\Theta$ is the $H/B$-relation 
    \begin{equation*}
        I - IZ - J + JZ,
    \end{equation*}
    where $Z$ is the center of $H/B$ and $I,J$ are two non-conjugate non-central subgroups of $H/B$ of order $p$.
    \item[(iii)] $H/B$ is isomorphic to the dihedral group $D_{2^{n}}$ for some $n \geq 4$ and $\Theta$ is the $H/B$-relation 
        \begin{equation*}
        I - IZ - J + JZ,
    \end{equation*}
    where $Z$ is the center of $H/B$ and $I,J$ are two non-conjugate non-central subgroups of $H/B$ of order $2$.
\end{enumerate}
\end{theo}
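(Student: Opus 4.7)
The plan is to proceed by strong induction on $|G|$. For the base case of cyclic $G$, there are no non-trivial $G$-relations, so the statement holds vacuously. For the inductive step, let $\mathcal{B}(G)$ denote the $\mathbb{Z}$-module of $G$-relations, and define $\mathcal{I}(G) \subseteq \mathcal{B}(G)$ to be the submodule generated by all $\mathrm{Ind}_{H}^{G}\Theta_H$ for proper subgroups $H < G$ with $H$-relations $\Theta_H$, together with all $\mathrm{Inf}_{G/B}^{G}\Theta_{G/B}$ for non-trivial normal subgroups $B \triangleleft G$ with $G/B$-relations $\Theta_{G/B}$. By the inductive hypothesis applied to the proper $p$-subquotients, $\mathcal{I}(G)$ is already generated over $\mathbb{Z}$ by the three listed types. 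Hence it suffices to analyze $\mathcal{B}(G)/\mathcal{I}(G)$ and show that it is either zero, or is cyclic and generated by the explicit relation attached to $G$ itself when $G$ is one of the three listed forms.

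The second step is to invoke the fact, cited earlier in the paper after Definition \ref{G-relation}, that $\mathrm{rk}_{\mathbb{Z}}\,\mathcal{B}(G)$ equals the number of conjugacy classes of non-cyclic subgroups of $G$. To compute the rank of $\mathcal{I}(G)$, I would analyze the images of the induction and inflation maps from proper subquotients, tracking their overlap via Mackey-type identities relating $\mathrm{Ind}$, $\mathrm{Inf}$, $\mathrm{Res}$, and $\mathrm{Def}$. The expected conclusion is that $\mathcal{B}(G)/\mathcal{I}(G)$ vanishes unless $G$ has an extremely constrained subgroup and normal-subgroup lattice, so that no induction or inflation can account for the last residual relation.

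A structural analysis of $p$-groups should then show that $\mathcal{B}(G)/\mathcal{I}(G)$ is non-zero only when $G$ is isomorphic to $(\mathbb{Z}/p\mathbb{Z})^2$, the Heisenberg group of order $p^3$ (for odd $p$), or the dihedral group $D_{2^n}$ with $n \geq 4$ (for $p = 2$). In each of these three cases I would verify by a direct character computation in $R_\mathbb{Q}(G)$ that the stated combination $1 - \sum C + p\cdot G$, respectively $I - IZ - J + JZ$, is a genuine $G$-relation, and that it generates the rank-$1$ quotient $\mathcal{B}(G)/\mathcal{I}(G)$. The rational character tables of these three small groups are simple enough that the required identities reduce to short arithmetic checks.

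The hard part is the rank computation for $\mathcal{I}(G)$ and ruling out all other $p$-groups at the level of the quotient. In particular, the case $p = 2$ requires separate attention because the subgroup lattice of $D_{2^n}$ is considerably richer than in the odd case and the interplay between $\mathrm{Ind}$ and $\mathrm{Inf}$ from its many dihedral subquotients has to be analyzed carefully. The conceptually cleanest framework, and the one used in Bouc's original treatment, is to realize $\mathcal{B}(-)$ inside the category of biset functors and extract the "primitive" contributions from simple biset functors; I would expect a full rigorous proof along these lines to be genuinely intricate, which is why the paper quotes this result rather than reproducing it.
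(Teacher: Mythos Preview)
The paper does not prove this theorem at all: it is stated with a citation to \cite[Thm. 5.3]{Bartel-Dokchitser15} and \cite[Cor. 6.16]{Bouc} and then used as a black box. You correctly recognize this in your final paragraph, and your sketch of the inductive/biset-functor strategy is a reasonable summary of how the cited references proceed, but there is nothing in the present paper to compare it against.
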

\vskip 10pt

Let $L/K$ be a Galois $p$-extension of number fields. Let $H/B$ be a subquotient of $G_{L/K}$, and let $$\Theta = \underset{B \leq H' \leq H}{\sum} n_{H'}(H'/B)$$ be an $H/B$-relation. A straightforward computation shows that we have $$\textrm{Ind}_{H}^{G_{L/K}}\textrm{Inf}_{H/B}^{H}\Theta = \underset{B \leq H' \leq H}{\sum} n_{H'}H'.$$ By Theorem \ref{Bartel}, we then obtain the equality

\begin{equation}\label{Bouctheorem1}
    \mathcal{C}_{\textrm{Ind}_{H}^{G_{L/K}}\textrm{Inf}_{H/B}^{H}\Theta}(E_{L}) = \mathcal{C}_{\textrm{Ind}_{H}^{G_{L/K}}\textrm{Inf}_{H/B}^{H}\Theta}(\mathbb{Z}) \cdot \underset{B \leq H' \leq H}{\prod} \, \bigg (\frac{R_{L^{H'}}}{\lambda(H')} \bigg)^{2n_{H'}}.
\end{equation}
Applying Theorem \ref{Brauer-Kuroda}, (\ref{Bouctheorem1}) becomes 
\begin{equation}\label{Bouctheorem2}
    \mathcal{C}_{\textrm{Ind}_{H}^{G_{L/K}}\textrm{Inf}_{H/B}^{H}\Theta}(E_{L}) = \mathcal{C}_{\textrm{Ind}_{H}^{G_{L/K}}\textrm{Inf}_{H/B}^{H}\Theta}(\mathbb{Z}) \cdot \underset{B \leq H' \leq H}{\prod} \bigg ( \frac{w_{L^{H'}}}{\lambda(H') \cdot h_{L^{H'}}} \bigg )^{2n_{H'}}.
\end{equation}
Since $G_{L/K}$ is a $p$-group, Proposition \ref{pdivisibility} implies that we have
\begin{equation*}
v_l \big ( \,\mathcal{C}_{\textrm{Ind}_{H}^{G_{L/K}}\textrm{Inf}_{H/B}^{H}\Theta}(E_{L} \,) \big)=0 \quad  \text{for all primes $l \neq p$.} 
\end{equation*}
Since the same holds for the regulator constants of the lattice $\mathbb{Z}$, the product in \eqref{Bouctheorem2} involving $w_{L^{H'}}$, $\lambda(H')$, and $h_{L^{H'}}$ 
can be replaced by its $p$-part. In particular, if $L$ is totally real, then we obtain the following observation.

\begin{prop}\label{pextensions}
Let $L/K$ be a Galois $p$-extension of totally real number fields. Then, the factor equivalence class of $E_{L}$ as a $\mathbb{Z}[G_{L/K}]$-lattice is uniquely determined by the quotients of $p$-class numbers of subfields of $L$ associated with the pairs $(H/B,\Theta)$ in Theorem $\ref{Bouc's theorem}$.
\end{prop}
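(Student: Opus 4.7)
The plan is to combine three ingredients that are already set up in the excerpt: Bartel's criterion (Theorem \ref{factor-regulatorconstant}) converting factor equivalence into an equality of regulator constants, the Tornehave--Bouc description of $G$-relations of a $p$-group (Theorem \ref{Bouc's theorem}), and the explicit formula \eqref{Bouctheorem2} for the regulator constant of $E_L$. First I would use Theorem \ref{factor-regulatorconstant} to rephrase ``the factor equivalence class of $E_L$'' as ``the collection of rationals $\mathcal{C}_\Theta(E_L)$'' as $\Theta$ runs over all $G_{L/K}$-relations. Then, since the regulator constant is additive in $\Theta$ (Lemma \ref{directproduct}) and behaves well under induction and inflation, Theorem \ref{Bouc's theorem} reduces the task to computing $\mathcal{C}_{\Theta'}(E_L)$ for the finite list of basic relations $\Theta' = \mathrm{Ind}_H^{G_{L/K}} \mathrm{Inf}_{H/B}^{H} \Theta$ attached to the pairs $(H/B,\Theta)$ of that theorem.

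For each such $\Theta' = \sum_{B \leq H' \leq H} n_{H'} H'$, I would invoke formula \eqref{Bouctheorem2} together with Remark \ref{regulatorconstantconstant} and then simplify using the two standing hypotheses: $L$ totally real and $p$ odd. Total reality gives $w_{L^{H'}} = 2$ for every $H'$, while oddness of $p$ forces $H^1(H', \mu(L)) = \mathrm{Hom}(H', \{\pm 1\}) = 0$ for the odd-order group $H'$, so $\lambda(H') = 1$. Combined with $\sum n_{H'} = 0$ (Lemma \ref{constant}), which kills the constant contribution $\prod 2^{2 n_{H'}}$, the formula collapses to
\[
\mathcal{C}_{\Theta'}(E_L) \;=\; \prod_{B \leq H' \leq H} |H'|^{-n_{H'}} \cdot \prod_{B \leq H' \leq H} h_{L^{H'}}^{-2 n_{H'}}.
\]

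To finish, I would apply Proposition \ref{pdivisibility} to $G_{L/K}$ with the (trivial, hence cyclic-quotient) normal subgroup $B := G_{L/K}$: any prime $l \neq p$ is coprime to $|G_{L/K}|$, so $v_l(\mathcal{C}_{\Theta'}(E_L)) = 0$. The purely group-theoretic factor $\prod |H'|^{-n_{H'}}$ is already supported only at $p$, which forces the product $\prod h_{L^{H'}}^{n_{H'}}$ to have trivial $l$-part at each $l \neq p$; thus every $h_{L^{H'}}$ may be replaced by its $p$-part $h_{L^{H'},p}$. The resulting expression displays $\mathcal{C}_{\Theta'}(E_L)$ as a constant depending only on $G_{L/K}$ and $(H/B,\Theta)$, multiplied by the square of the quotient of $p$-class numbers associated with that pair. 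By Theorem \ref{factor-regulatorconstant} this is exactly the asserted statement. The argument is essentially bookkeeping after the material of Sections \ref{SecFactor}--\ref{standardlattices}; the only step requiring genuine care is the vanishing of both $\lambda(H')$ and the $w$-contribution, which is what isolates the $p$-primary parts of the class numbers as the sole arithmetic input.
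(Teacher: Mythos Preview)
Your proposal is correct and follows essentially the same approach as the paper: both arguments apply Theorem \ref{Bouc's theorem} to reduce to the basic relations, plug into formula \eqref{Bouctheorem2}, use Proposition \ref{pdivisibility} to discard all prime-to-$p$ contributions, and then observe that total reality gives $w_{L^{H'}}=2$ (so Lemma \ref{constant} kills the $w$-product) while $\lambda(H')=1$ because $H'$ has odd order. The paper's written proof is terser only because the reduction via \eqref{Bouctheorem2} and Proposition \ref{pdivisibility} was already carried out in the paragraph preceding the proposition.
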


\begin{proof}
The quotients involving $w_{L^{H'}}$ in $(\ref{Bouctheorem2})$ are equal to $1$ by Lemma \ref{constant}. The claim follows because we have $\lambda(H')=1$.
\end{proof}

The following corollary is helpful in studying the existence of Minkowski units in Galois $p$-extensions of $\mathbb{Q}$.

\begin{coro}\label{pextensions2}
Let $p$ be an odd prime, and let $L/\mathbb{Q}$ be a Galois $p$-extension. Then $E_{L}$ is factor equivalent to $\mathcal{A}_{G_{L/\mathbb{Q}}}$ as $\mathbb{Z}[G_{L/\mathbb{Q}}]$-lattices if and only if we have 
\begin{equation}\label{Bouc-type relation}
   \underset{B \leq H' \leq H}{\prod}\mathfrak{h}_{L^{H'}}^{n_{H'}} = \underset{B \leq H' \leq H}{\prod}h_{L^{H'}}^{n_{H'}} = \underset{B \leq H' \leq H}{\prod}|H'|^{-n_{H'}} = \underset{B \leq H' \leq H}{\prod}|H'/B|^{-n_{H'}}
\end{equation}
for all the pairs $(H/B, \Theta)$ consisting of a subquotient $H/B$ of $G_{L/\mathbb{Q}}$ and an $H/B$-relation $$\Theta = \sum_{B \leq H' \leq H} n_{H'}\,(H'/B)$$ as in Theorem \ref{Bouc's theorem}.
\end{coro}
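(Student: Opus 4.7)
The plan is to combine Theorem \ref{maintheorem} with the Tornehave--Bouc generation theorem (Theorem \ref{Bouc's theorem}), and then use Proposition \ref{pdivisibility} to pass from ordinary class numbers to $p$-class numbers.

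First, since $[L:\mathbb{Q}]$ is a power of the odd prime $p$, the Galois group $G_{L/\mathbb{Q}}$ has no element of order $2$, so complex conjugation acts trivially on $L$; thus $L$ is totally real and $\mu(L)=\{\pm 1\}$. For every subgroup $H \leq G_{L/\mathbb{Q}}$, the group $H^{1}(H,\mu(L))$ is annihilated by both $|H|$ (odd) and $2$, hence is trivial, so $\lambda(H)=1$. Also $w_{L^{H}}=2$, and Lemma \ref{constant} gives $\prod w_{L^{H}}^{n_{H}} = 2^{\sum n_{H}} = 1$ for any $G_{L/\mathbb{Q}}$-relation $\sum n_H H$. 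Substituting these into Theorem \ref{maintheorem} reduces the factor equivalence of $E_{L}$ and $\mathcal{A}_{G_{L/\mathbb{Q}}}$ to the single condition
\[
\prod_{H \leq G_{L/\mathbb{Q}}} |H|^{n_{H}} \cdot \prod_{H \leq G_{L/\mathbb{Q}}} h_{L^{H}}^{n_{H}} \;=\; 1
\]
for every $G_{L/\mathbb{Q}}$-relation $\sum n_{H} H$.

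Next, since this numerical condition is multiplicative in the relation, the set of relations satisfying it forms a subgroup of the group of all $G_{L/\mathbb{Q}}$-relations; hence it suffices to verify the condition on a generating set. Because $p$ is odd, case (iii) of Theorem \ref{Bouc's theorem} is vacuous, and the group of $G_{L/\mathbb{Q}}$-relations is generated by those of the form $\mathrm{Ind}_{H}^{G_{L/\mathbb{Q}}} \mathrm{Inf}_{H/B}^{H}\Theta$ with $(H/B,\Theta)$ as in cases (i) or (ii). Unwinding the induction and inflation shows that such a relation equals $\sum_{B \leq H' \leq H} n_{H'} H'$, and the reduced condition becomes $\prod h_{L^{H'}}^{n_{H'}} = \prod |H'|^{-n_{H'}}$, which is the middle equality in the displayed chain of (\ref{Bouc-type relation}).

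Finally, the two outer equalities in (\ref{Bouc-type relation}) hold automatically for every such pair $(H/B,\Theta)$. The identity $\prod |H'|^{-n_{H'}} = \prod |H'/B|^{-n_{H'}}$ comes from $|H'|=|H'/B|\cdot|B|$ combined with $\sum n_{H'}=0$ (Lemma \ref{constant} applied to the $H/B$-relation $\Theta$), which forces $\prod |B|^{n_{H'}}=1$. For the identity $\prod h_{L^{H'}}^{n_{H'}} = \prod h_{L^{H'},p}^{n_{H'}}$, I would use Theorem \ref{Bartel} together with Proposition \ref{Regulator constant C(Z[G]/G)} to express the defect $\mathcal{C}_{\Theta}(E_L)/\mathcal{C}_{\Theta}(\mathcal{A}_{G_{L/\mathbb{Q}}})$ as a monomial in the $h_{L^{H'}}$'s, and then invoke Proposition \ref{pdivisibility} with $B=G_{L/\mathbb{Q}}$ (so $G/B$ is trivial, hence cyclic) at every prime $\ell\neq p$ to conclude that this defect has trivial $\ell$-adic valuation; this forces the prime-to-$p$ parts of the class numbers to cancel. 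The only substantive input is the Tornehave--Bouc generation theorem, so I expect the real work in this section to lie elsewhere (specifically, in verifying the Bouc-type identities for the Heisenberg and elementary abelian subquotients that actually occur, which is what drives Theorem \ref{maintheorem2}); the corollary itself is essentially bookkeeping on top of Theorem \ref{maintheorem}.
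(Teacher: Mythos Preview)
Your argument is correct and essentially mirrors the paper's own derivation: the paper does not give a separate proof of this corollary but obtains it from the discussion preceding Proposition~\ref{pextensions} (equations~(\ref{Bouctheorem1})--(\ref{Bouctheorem2}), the total-real/odd-$p$ simplifications $w_{L^{H'}}=2$, $\lambda(H')=1$, and Proposition~\ref{pdivisibility} to replace $h_{L^{H'}}$ by $h_{L^{H'},p}$), combined with the formula for $\mathcal{C}_{\Theta}(\mathcal{A}_G)$ from Proposition~\ref{Regulator constant C(Z[G]/G)} and Bouc's generation theorem. The only cosmetic difference is that you invoke the packaged Theorem~\ref{maintheorem} first and then unwind, whereas the paper works directly with $\mathcal{C}_{\Theta}(E_L)$; both routes amount to the same computation.
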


For Galois $p$-extensions of totally real $p$-rational number fields, Proposition \ref{pextensions} can be refined as follows.

\begin{prop}\label{application-Bouc}
Let $L/K$ be a Galois $p$-extension of totally real $p$-rational number fields. Then the factor equivalence class of $E_{L}$ as a $\mathbb{Z}[G_{L/K}]$-lattice is determined by the quotients of $p$-class numbers of subfields of $L$ associated to the pairs $(H/B, \Theta)$ in Theorem \ref{Bouc's theorem} such that $H/B \simeq (\mathbb{Z}/p\mathbb{Z})^2$.
\end{prop}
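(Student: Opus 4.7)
The plan is to build on Proposition \ref{pextensions}: by Theorem \ref{Bouc's theorem}, the factor equivalence class of $E_L$ is detected by the regulator constants $\mathcal{C}_{\mathrm{Ind}_{H}^{G_{L/K}}\mathrm{Inf}_{H/B}^{H}\Theta}(E_L)$ for pairs $(H/B,\Theta)$ of Bouc types (i)--(iii), and by the formula (\ref{Bouctheorem2}) these are encoded by quotients of $p$-class numbers of subfields of $L$. Since $p$ is odd, type (iii) cannot occur as a subquotient of the $p$-group $G_{L/K}$, so the only task is to eliminate type (ii). I will do this one Heisenberg subquotient at a time: for every Heisenberg $H/B \leq G_{L/K}$ I will recover its regulator constant from those of two type (i) relations sitting inside the same $H/B$.

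Fix such a subquotient $H/B$ with centre $Z$ and choose non-conjugate non-central order-$p$ subgroups $I,J$ so that $\Theta_{\mathrm{Heis}} = I - IZ - J + JZ$. The maximal abelian subgroups $IZ$ and $JZ$ of $H/B$ are each isomorphic to $(\mathbb{Z}/p\mathbb{Z})^{2}$, and therefore contribute type (i) Bouc relations $\Theta_{IZ}$ and $\Theta_{JZ}$. The key group-theoretic input is that in $\mathrm{Heis}(p)$ the centraliser of any $g\in I\setminus\{1\}$ equals $IZ$, so the $(H/B)$-conjugacy class of $I$ inside $IZ$ has size $p$ and contains \emph{all} non-$Z$ order-$p$ subgroups of $IZ$; the analogous statement holds for $JZ$. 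Because conjugate subgroups of $G_{L/K}$ fix $G_{L/K}$-isomorphic subfields and therefore share the same $p$-class numbers, the $p$ terms arising from the $I$-conjugates in $\Theta_{IZ}$ collapse into a single $p$-th power of $h_{L^{IB},p}$, and likewise for $\Theta_{JZ}$.

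Plugging $\Theta_{IZ}$, $\Theta_{JZ}$, $\Theta_{\mathrm{Heis}}$ into (\ref{Bouctheorem2}), using $w_{L^{H'}}=2$ and $\lambda(H')=1$ for totally real fields and odd-order $H'$ (so the $w/\lambda$ contributions cancel by Lemma \ref{constant}) together with Remark \ref{regulatorconstantconstant} for the $\mathcal{C}_{\Theta}(\mathbb{Z})$-factors, a short direct computation produces the identity
\begin{equation*}
\mathcal{C}_{\mathrm{Ind}\,\mathrm{Inf}\,\Theta_{\mathrm{Heis}}}(E_L)^{\,p} \;=\; \mathcal{C}_{\mathrm{Ind}\,\mathrm{Inf}\,\Theta_{JZ}}(E_L)\;\big/\;\mathcal{C}_{\mathrm{Ind}\,\mathrm{Inf}\,\Theta_{IZ}}(E_L),
\end{equation*}
with all three quantities positive rationals. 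Since positive $p$-th roots in $\mathbb{Q}_{>0}$ are unique, the Heisenberg regulator constant is recovered from the two type (i) ones, which by (\ref{Bouctheorem2}) depend only on quotients of $p$-class numbers of subfields attached to $(\mathbb{Z}/p\mathbb{Z})^{2}$-subquotient pairs. Combining this with Proposition \ref{pextensions} and Theorem \ref{factor-regulatorconstant} then yields the claim.

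The main obstacle is the conjugacy statement in the second paragraph: without the factor of $p$ that it forces in the exponents of $h_{L^{IB},p}$ and $h_{L^{JB},p}$, the ratio $\mathcal{C}_{\Theta_{JZ}}(E_L)/\mathcal{C}_{\Theta_{IZ}}(E_L)$ would not match a $p$-th power of $\mathcal{C}_{\Theta_{\mathrm{Heis}}}(E_L)$ and the reduction to type (i) would fail. Once this Heisenberg-internal group theory is in hand, the remainder of the argument is bookkeeping with the formulae of Section \ref{standardlattices} and Theorem \ref{Bartel}.
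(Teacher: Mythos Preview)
Your argument is correct, but it takes a genuinely different route from the paper. The paper's proof is structural: since $L/K$ is a Galois $p$-extension of totally real $p$-rational fields, Corollary~\ref{totallyrealprational} and Proposition~\ref{Demushkin} force $G_{L/K}$ to be a subquotient of a rank-$2$ Demu\v{s}kin pro-$p$ group; open subgroups of such groups are again rank-$2$ Demu\v{s}kin, so every finite subquotient is \emph{powerful}. Since the Heisenberg group of order $p^{3}$ is not powerful, type~(ii) subquotients simply do not occur in $G_{L/K}$, and Proposition~\ref{pextensions} finishes the proof.

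Your approach, by contrast, never invokes $p$-rationality beyond what is already used in Proposition~\ref{pextensions} (namely, nothing): you allow Heisenberg subquotients to exist and instead show that their regulator constants are redundant, being recoverable as the unique positive $p$-th root of a ratio of two type~(i) regulator constants coming from the subgroups $IZ$ and $JZ$. The key step---that the $p$ non-central order-$p$ subgroups of $IZ$ form a single $(H/B)$-conjugacy class, so their preimages in $G_{L/K}$ are conjugate and fix isomorphic subfields---is correct, and the resulting identity $\mathcal{C}_{\Theta_{\mathrm{Heis}}}(E_L)^{p}=\mathcal{C}_{\Theta_{JZ}}(E_L)/\mathcal{C}_{\Theta_{IZ}}(E_L)$ checks out term by term via~(\ref{Bouctheorem2}). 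This actually proves a stronger statement than the paper: the conclusion of Proposition~\ref{application-Bouc} holds for \emph{any} Galois $p$-extension of totally real number fields with $p$ odd, not just $p$-rational ones. The trade-off is that the paper's argument is shorter and exposes an interesting structural constraint (powerfulness) on Galois groups of $p$-rational towers that is used elsewhere, whereas your argument is more computational but more general.
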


\begin{proof}
By Theorem \ref{Bouc's theorem} and Proposition \ref{pextensions}, it suffices to prove that $G_{L/K}$ has no subquotient $H/B$ isomorphic to the Heisenberg group of order $p^{3}$. By Corollary \ref{totallyrealprational} and Proposition \ref{Demushkin}, $G_{L/K}$ is isomorphic to a subquotient of a Demu\v{s}kin group of rank $2$. It is known that open subgroups of a Demu\v{s}kin group of rank $2$ are themselves Demu\v{s}kin of rank $2$ (cf. \cite[\S 4.5]{Serre-GC}). Hence, every subquotient of a Demu\v{s}kin group of rank $2$ is powerful. The claim follows since the Heisenberg group of order $p^3$ is not powerful.
\end{proof}

Now, we give a proof of Theorem \ref{maintheorem2}. Let $F$ be a non-abelian $p$-rational $p$-extension of $\mathbb{Q}$, and let $F^{\ab}$ be the maximal subfield of $F$ that is abelian over $\mathbb{Q}$. By group theory, $F^{\ab}/\mathbb{Q}$ is not cyclic. Since $\mathbb{Q}$ is $p$-rational, there exists a non-$p$ prime $q$ such that $\Ram(F/\mathbb{Q})=\{p,q\}$. Hence, $F$ is contained in $\mathbb{Q}_{S_p \cup \{q\}}$, and the arithmetic of $F$ can be analyzed via the tower $\mathbb{Q}_{S_p \cup \{q\}}/\mathbb{Q}$.

The case where $p$ splits in $F$ is easy and can be settled immediately.

\begin{lemm}
    If $p$ splits in $F$, then $F$ does not admit a local Minkowski unit.
\end{lemm}

\begin{proof}
It is well known that if $F$ admits a local Minkowski unit, then so does every subfield of $F$ that is Galois over $\mathbb{Q}$. By the Burnside basis theorem, $p$ splits in $F$ if and only if it splits in $\mathbb{Q}_{\{q\}}$. In this case, Theorem \ref{genus equiv for genus extensions} (ii) together with Remark \ref{criterionofBurns} implies that $F$ does not admit a local Minkowski unit.
\end{proof}

 Hence, it remains to prove Theorem \ref{maintheorem2} in the case where $p$ does not split in $F$. In what follows, we work under this assumption. Following \S \ref{uniquepprime}, we denote by $p_K$ the unique $p$-adic prime of each subfield $K$ of $F$ (both $K$ and $F$ being contained in $\mathbb{Q}_{S_p \cup \{q\}}$). By Corollary \ref{pextensions2} and Proposition \ref{application-Bouc}, $E_{F}$ is factor equivalent to $\mathcal{A}_{G_{F/\mathbb{Q}}}$ if and only if the equality $(\ref{Bouc-type relation})$ holds for every extension $K_{p,q}^{\el}/K$ contained in $F$. For ease of notation, we set
\begin{equation*}
\mathcal{I}_{K,p} := \text{the inertia subgroup of } G_{K^{\el}_{p,q}/K} \text{ at } p_{K^{\el}_{p,q}}, 
\qquad
\mathcal{I}_{K,q} := \text{the inertia subgroup of } G_{K^{\el}_{p,q}/K} \text{ at } q_{K^{\el}_{p,q}}.
\end{equation*}

\begin{lemm}\label{lemma-application of Bouc}
Let $F$ be a $p$-rational non-cyclic $p$-extension of $\mathbb{Q}$ with a unique $p$-adic prime. Let $q$ be the rational non-$p$ prime with $\Ram(F/\mathbb{Q})=\{p,q\}$. Let $K$ be a subfield of $F$ with $K_{p,q}^{\el} \subseteq F$. The following claims are valid:
\begin{enumerate}
\item[(i)] We have $\mathcal{I}_{K,p} \neq 1$ and $\mathcal{I}_{K,q} \simeq \mathbb{Z}/p\mathbb{Z}$.
\item[(ii)] If we have $\mathcal{I}_{K,p}=G_{K^{\el}_{p,q}/K}$, then the necessary condition $(\ref{Bouc-type relation})$ associated to $K_{p,q}^{\el}/K$ is not satisfied.
\item[(iii)] If we have $\mathcal{I}_{K,p} \neq G_{K^{\el}_{p,q}/K}$ and $\mathcal{I}_{K,p} \neq \mathcal{I}_{K,q}$, then the necessary condition $(\ref{Bouc-type relation})$ associated to $K_{p,q}^{\el}/K$ is satisfied.
\item[(iv)] If we have $\mathcal{I}_{K,p} = \mathcal{I}_{K,q}$, then the necessary condition $(\ref{Bouc-type relation})$ associated to $K^{\el}_{p,q}/K$ is satisfied if and only if $\mathfrak{h}_{K^{\el}_{p,q}}=\mathfrak{h}_K$.
\end{enumerate}
\end{lemm}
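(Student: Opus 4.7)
The plan is to prove (i) separately, then treat (ii)--(iv) via a common setup. Set $G := G_{K^{\el}_{p,q}/K} \simeq (\mathbb{Z}/p\mathbb{Z})^2$, $L_C := (K^{\el}_{p,q})^C$ for each order-$p$ subgroup $C \leq G$, and $m := v_p(h_{K,p})$, so that $H_K = K_m$. For (i), inertness of $p$ in $F \supseteq K^{\el}_{p,q}$ makes the decomposition subgroup of $p_K$ in $G$ equal to all of $G$; since the quotient of any decomposition group by its inertia is cyclic, non-cyclicity of $G$ forces $T_{K,p} \neq 1$. If $q_K$ were unramified in $K^{\el}_{p,q}/K$, then $K^{\el}_{p,q} \subseteq K_{\Sigma_p} = K_{\infty}$ by totally real $p$-rationality, contradicting the fact that $G_{K_{\infty}/K} \simeq \mathbb{Z}_p$ cannot surject onto the non-cyclic group $G$; tame inertia being cyclic then yields $T_{K,q} \simeq \mathbb{Z}/p\mathbb{Z}$.

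For (ii)--(iv), I will invoke Corollary \ref{pextensions2} for the unique $(\mathbb{Z}/p\mathbb{Z})^2$-relation $\Theta = 1 - \sum_C C + pG$; taking $v_p$, the condition $(\ref{Bouc-type relation})$ reduces to
\begin{equation*}
v_p(h_{K^{\el}_{p,q},p}) + pm - \sum_C v_p(h_{L_C,p}) \stackrel{?}{=} 1 - p.
\end{equation*}
Each $L_C/K$ is cyclic of degree $p$, ramified at $p_K$ (resp. $q_K$) precisely when $T_{K,p} \not\subseteq C$ (resp. $T_{K,q} \not\subseteq C$), so it falls into exactly one of the regimes of Lemma \ref{p-class number cyclic} (ii)--(iii) and Proposition \ref{cyclic pq ramified}. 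Noting $L_{T_{K,q}} = K_1$ (both are the unique degree-$p$ subextension of $K^{\el}_{p,q}/K$ unramified at $q$), the quantity $v_p(h_{K^{\el}_{p,q},p})$ is then computed via the top cyclic step $K^{\el}_{p,q}/K_1$, whose inertia at $p_{K_1}$ (resp. $q_{K_1}$) equals $T_{K,p} \cap T_{K,q}$ (resp. $T_{K,q}$).

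Case (ii): the hypothesis $T_{K,p} = G$ combined with the Burnside basis theorem forces $K_{\{q\}} = K$, whence $m = 0$ and $\mathbb{Q}_{\{q\}} \subseteq K = H_K$; every class number in the tower then equals $1$ and LHS $= 0 \neq 1 - p$. Case (iii): $T_{K,p} \cap T_{K,q} = 1$, so $L_{T_{K,p}}$ is ramified only at $q$ (contributing $p^m$ by Lemma \ref{p-class number cyclic}(iii)), $L_{T_{K,q}} = K_1$ only at $p$ (contributing $p^{\max(m-1,0)}$), and the remaining $p-1$ subextensions are ramified at both; the degree-$p$ subextension of $\mathbb{Q}_{\{q\}} K/K$ is unramified at $p$ and hence equals $L_{T_{K,p}}$, whose $q$-ramification precludes $\mathbb{Q}_{\{q\}} \subseteq H_K$, so each of the $p-1$ non-distinguished $L_C$'s contributes $p^{m+1}$; since $K^{\el}_{p,q}/K_1$ is unramified at $p$ we get $v_p(h_{K^{\el}_{p,q},p}) = \max(m-1,0)$, and substitution yields $(p-1)(m-(m+1)) = 1-p$. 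Case (iv): writing $T := T_{K,p} = T_{K,q}$, the extension $L_T/K$ is unramified so $L_T \subseteq H_K$, forcing $L_T = K_1$ and $m \geq 1$; the top step $K^{\el}_{p,q}/K_1$ is cyclic of degree $p$ ramified at both $p_{K_1}$ and $q_{K_1}$, and Proposition \ref{cyclic pq ramified} (with $H_{K_1} = H_K$) gives $v_p(h_{K^{\el}_{p,q},p}) \in \{m-1, m\}$ according as $\mathbb{Q}_{\{q\}} \subseteq H_K$ or not, with the same dichotomy governing the $p$ non-distinguished $L_C$'s; the LHS then evaluates to $0$ or $1 - p$ respectively, producing the stated criterion $h_{K^{\el}_{p,q},p} = h_{K,p}$.

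The main obstacle will be tracking when the criterion $\mathbb{Q}_{\{q\}} \subseteq H_K$ from Proposition \ref{cyclic pq ramified} is triggered: in (ii)--(iii) it is forced by the inertia configuration inside $G$, whereas in (iv) it is the genuine arithmetic input of $K$ that produces the dichotomy recorded in the lemma's statement.
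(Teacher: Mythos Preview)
Your proof is correct, and for (i) and (ii) it follows the paper's line. For (iii) and (iv) you take a genuinely different route: you invoke Proposition~\ref{cyclic pq ramified} to handle the cyclic-$p$ steps ramified at both $p$ and $q$, letting the dichotomy ``$\mathbb{Q}_{\{q\}} \subseteq H_K$ or not'' drive the computation, and you carry a general $m$ throughout. The paper instead argues directly from Proposition~\ref{pclassnumber} and Lemma~\ref{p-class number cyclic}: in (iii) it first notes that $T_{K,p}\neq T_{K,q}$ forces $K_1/K$ to be ramified at $p$, hence $m=0$ from the start, so every $p$-class number is $1$ or $p$ and the verification is immediate; in (iv) it uses only the relations $h_{K,p}=p\,h_{K_1,p}$ and $h_{M,p}=p\,h_{K^{\el}_{p,q},p}$ for the $p$ doubly-ramified intermediate fields $M$, reducing the identity to $h_{K^{\el}_{p,q},p}=h_{K,p}$ without ever touching the $\mathbb{Q}_{\{q\}}$-criterion. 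The paper's route is more elementary and does not need hypothesis~(U) for $(\mathbb{Q},p,q)$ in (iii)--(iv). Your route does need (U) to cite Proposition~\ref{cyclic pq ramified}; this is fine because non-cyclicity of $F$ gives $\mathbb{Q}^{\el}_{p,q}\subseteq F$, and the uniqueness of the $p$-adic prime in $F$ together with Burnside then yields (U) for $(\mathbb{Q},p,q)$ --- but you should say so explicitly, since it also underlies your exclusion of $\mathbb{Q}_{\{q\}}\subseteq K$ in case (iii) (via Corollary~\ref{totally ramified}). What your approach buys is a uniform mechanism tying the lemma to the machinery of \S\ref{uniquepprime}; what the paper's approach buys is that (iii)--(iv) go through with nothing beyond the basic $p$-rational class-number lemmas.
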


\begin{proof}
\begin{enumerate}
\item[(i)] The subgroup $\mathcal{I}_{K,p}$ must be non-trivial because otherwise $p_K$ splits in $K_{p,q}^{\el}$. Since $K$ is $p$-rational and $G_{K^{\el}_{p,q}/K}$ is not cyclic, we have $\mathcal{I}_{K,q} \neq 1$. The cyclicity of $\mathcal{I}_{K,q}$ follows from the class field theory. 
\item[(ii)] The Galois group $G_{K_{p,q}^{\el}/K}$ is the maximal elementary abelian quotient of $G_{S_{p} \cup \{q_{K}\}}(K)$. If $\mathcal{I}_{K,p}=G_{K^{\el}_{p,q}/K}$, then $G_{S_{p} \cup \{q_K\}}(K)$ coincides with the inertia subgroup at the unique $p$-adic prime by the Burnside basis theorem. Hence, every subfield of $F$ containing $K$ has $p$-class number $1$ by Proposition \ref{pclassnumber-inertiasubgroup} and Corollary \ref{totally ramified}. Therefore, the quotient of $p$-class groups in $(\ref{Bouc-type relation})$ associated with $K^{\el}_{p,q}/K$ is $1$, and so the equality in $(\ref{Bouc-type relation})$ fails.
\item[(iii)] The group $\mathcal{I}_{K,p}$ is non-trivial by (i). By the $p$-rationality of $K$, the subgroup $\mathcal{I}_{K,q}$ corresponds to the first layer $K_1$ of $K_{\infty}/K$. Let $K'$ be the subfield of $K_{p,q}^{\el}$ fixed by the subgroup $\mathcal{I}_{K,p}$. By the assumption, the extensions $K^{\el}_{p,q}/K'$ and $K_1/K$ are ramified precisely at the $p$-adic primes. Hence, we have $$\mathfrak{h}_K=\mathfrak{h}_{K_1}=\mathfrak{h}_{K'}=\mathfrak{h}_{K^{\el}_{p,q}}=1$$ by Lemma \ref{p-class number cyclic} (ii). For any other degree $p$-extension $N$ of $K$ in $K^{\el}_{p,q}$ distinct from $K'$ and $K_1$, the extension $K^{\el}_{p,q}/N$ is unramified, since $K_{p,\mathfrak{q}}^{\el}$ is equal to both $NK_1$ and $NK'$. By Lemma \ref{p-class number cyclic} (i), we then have $\mathfrak{h}_{N}=p$ for all such $N$. Thus, condition $(\ref{Bouc-type relation})$ is satisfied for the extension $K_{p,q}^{\el}/K$.
\item[(iv)] In this case, the first layer $K_1$ of $K_{\infty}/K$ is the subfield of $K^{\el}_{p,q}$ corresponding to the subgroup $\mathcal{I}_{K,p}=\mathcal{I}_{K,q}$. For any degree-$p$ extension $N$ of $K$ contained in $K^{\el}_{p,q}$ other than $K_1$, we have $K^{\el}_{p,q}=N_1$. Therefore, we have
\begin{equation*}
    \mathfrak{h}_{K} = p \cdot \mathfrak{h}_{K_1}, \qquad \mathfrak{h}_N=p \cdot\mathfrak{h}_{K^{\el}_{p,q}}
\end{equation*}
for all such $N$ by Proposition \ref{pclassnumber}. From these identities, the claim follows.

\end{enumerate}
\end{proof}

Before proving Theorem \ref{maintheorem2}, we recall a well-known fact about the subgroup lattice of the non-abelian semi-direct product $\mathbb{Z}/p^{2}\mathbb{Z} \rtimes \mathbb{Z}/p\mathbb{Z}$.

\begin{lemm}\label{subgrouplattice}
Let $p$ be an odd prime, and let $G \simeq \mathbb{Z}/p^2\mathbb{Z} \rtimes \mathbb{Z}/p\mathbb{Z}$ be the non-abelian semidirect product. Then, $G$ has a unique subgroup $H$ isomorphic to $(\mathbb{Z}/p\mathbb{Z})^2$. Moreover, every subgroup of $G$ of order $p$ is contained in $H$.  
\end{lemm}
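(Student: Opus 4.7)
The plan is to fix a presentation $G = \langle x, y \mid x^{p^2} = y^p = 1, \; yxy^{-1} = x^{1+p} \rangle$, which realises the unique non-abelian semidirect product of $\mathbb{Z}/p^2\mathbb{Z}$ by $\mathbb{Z}/p\mathbb{Z}$ that is not of exponent $p$. The main computational input will be the formula
\begin{equation*}
(x^i y^j)^p = x^{ip} \qquad \text{for all } i,j \in \mathbb{Z},
\end{equation*}
valid because $p$ is odd. To establish this, I would first note by induction on $k$ that
\begin{equation*}
(x^i y^j)^k = x^{\, i \, (1 + \alpha + \alpha^2 + \cdots + \alpha^{k-1})}\, y^{jk},
\end{equation*}
where $\alpha = (1+p)^j$, using only the relation $y^j x y^{-j} = x^{\alpha}$. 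Setting $k = p$ kills the $y$-part, and a direct expansion $\alpha^m \equiv 1 + mjp \pmod{p^2}$ gives $\sum_{m=0}^{p-1}\alpha^m \equiv p + \tfrac{p(p-1)}{2} jp \pmod{p^2}$. The odd-prime hypothesis ensures $\tfrac{p-1}{2} \in \mathbb{Z}$ and hence the second term vanishes modulo $p^2$, leaving $(x^iy^j)^p = x^{ip}$.

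With this formula in hand, the element $x^iy^j$ has order dividing $p$ if and only if $p \mid i$, so the subset of elements of order dividing $p$ in $G$ coincides with
\begin{equation*}
H := \langle x^p, y \rangle = \{\, x^{ap}y^j : a, j \in \mathbb{Z}/p\mathbb{Z}\,\}.
\end{equation*}
Since $x^p$ is central (it generates $Z(G)$) and $y$ has order $p$ independent of $x^p$, the subgroup $H$ is abelian of exponent $p$ and order $p^2$, hence $H \simeq (\mathbb{Z}/p\mathbb{Z})^2$. This immediately gives the second claim of the lemma: every subgroup of order $p$ is generated by an element of order $p$, which must lie in $H$.

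For the first claim, let $K \leq G$ be any subgroup isomorphic to $(\mathbb{Z}/p\mathbb{Z})^2$. Then every element of $K$ has order dividing $p$, so $K \subseteq H$ by the characterisation above; since $|K| = |H| = p^2$, we conclude $K = H$, proving uniqueness. The only subtlety in the whole argument is the odd-prime check that makes the quadratic correction $\tfrac{p(p-1)}{2} jp$ vanish modulo $p^2$ — without this, the analogous statement fails at $p = 2$, which is consistent with the distinct subgroup structure of the dihedral and quaternion groups of order $8$.
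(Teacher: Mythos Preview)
Your proof is correct and uses essentially the same approach as the paper: both fix the standard presentation and compute $p$-th powers of general elements to identify $H=\langle x^p,y\rangle$ as the unique elementary abelian subgroup. Your organization is slightly more streamlined, since by directly characterizing the set $\{g\in G: g^p=1\}$ you obtain both claims at once, whereas the paper first counts the $p+1$ subgroups of order $p^2$ via $Z(G)=[G,G]$ and then eliminates the $p$ cyclic ones using the commutator identity $(xy)^n\equiv x^ny^n[y^{-1},x^{-1}]^{n(n-1)/2}\pmod{[[G,G],G]}$.
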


\begin{proof}
By elementary arguments, one checks that the center $Z(G)$ of $G$ coincides with the commutator subgroup $[G,G]$. Every subgroup of $G$ of order $p^2$ must contain the center, because otherwise $G$ would be abelian. Since the quotient $G/Z(G)=G/[G,G]$ is isomorphic to $(\mathbb{Z}/p\mathbb{Z})^2$, there are precisely $p+1$ subgroups of $G$ of order $p^2$. It is known that $G$ admits the following presentation (cf. \cite[Exercise 5.3.6]{robinson})
\begin{equation*}
    \langle \, x , y \, \mid \, x^{p^2}=1=y^p, \, y^{-1}xy=x^{1+p} \, \rangle.
\end{equation*}
Using the congruence $$(xy)^n \equiv x^ny^n(y^{-1}x^{-1}yx)^{n(n-1)/2} \equiv [[G,G],G]$$ (cf. \cite[\S 0.1]{analytic}), we can check that the elements $xy^{i}$ for $1 \leq i \leq p$ generate $p$ distinct normal cyclic subgroups of order $p^2$. Therefore, the non-cyclic subgroup $H$ of order $p^2$ generated by $x^p$ and $y$ is the unique subgroup of $G$ isomorphic to $(\mathbb{Z}/p\mathbb{Z})^2$. The last claim follows because every subgroup of order $p$ is either equal to $Z(G)$ or generates a rank-$2$ elementary abelian subgroup with $Z(G)$. 
\end{proof}

\begin{proof}[Proof of Theorem \ref{maintheorem2}]
First, the extension $F/F^{\ab}$ is cyclic and $q_{F^{\ab}}$ is totally ramified in $F$. If $F/F^{\ab}$ were not cyclic, then we would have $(F^{\ab})_{p,q}^{\el} \subseteq F$. Then, we have a contradiction to the maximality of $F^{\ab}$ because $(F^{\ab})_1$, which is a subfield of $(F^{\ab})^{\el}_{p,q}$, is abelian over $\mathbb{Q}$. Similarly, $q_{F^{\ab}}$ must be totally ramified in $F$ because otherwise $(F^{\ab})_1$ would be a subfield of $F$.

Since $G_{F/F^{\ab}}$ is cyclic, there exists a unique extension $W$ of $F^{\ab}$ in $F$ with $[W:F^{\ab}]=p$. As both $F$ and $F^{\ab}$ are Galois over $\mathbb{Q}$, the same holds true for $W$. Let $N$ be the subfield of $F^{\ab}$ such that $G_{F^{\ab}/N}$ is isomorphic to $(\mathbb{Z}/p\mathbb{Z})^{2}$. Then, $G_{W/N}$ is either isomorphic to $\mathbb{Z}/p^2\mathbb{Z} \times \mathbb{Z}/p\mathbb{Z}$ or to the non-abelian semi-direct product $\mathbb{Z}/p^2\mathbb{Z} \rtimes \mathbb{Z}/p\mathbb{Z}$. By considering their subgroup lattices, one finds a subfield $N \subsetneq W' \subsetneq F^{\ab}$ with $G_{W/W'} \simeq (\mathbb{Z}/p\mathbb{Z})^{2}$ (cf. Lemma \ref{subgrouplattice}). We will show that the necessary condition $(\ref{Bouc-type relation})$ associated with $W/W'$ for the factor equivalence of $E_{F}$ and $\mathcal{A}_{G_{F/\mathbb{Q}}}$ is not satisfied.

\begin{equation*}
\begin{tikzcd}
                                                                                                                      &                                                                                                                         & F \\
                                                                                                                      & W \arrow[ru, no head]                                                                                                   &   \\
F^{\mathrm{ab}} \arrow[ru, no head] \arrow[ru, "\mathbb{Z}/p\mathbb{Z}", no head, bend left=49]                       &                                                                                                                         &   \\
                                                                                                                      & W' \arrow[uu, no head] \arrow[lu, no head] \arrow[uu, "(\mathbb{Z}/p\mathbb{Z})^2" description, no head, bend right=49] &   \\
N \arrow[uu, no head] \arrow[ru, no head] \arrow[uu, "(\mathbb{Z}/p\mathbb{Z})^2" description, no head, bend left=49] &                                                                                                                         &  
\end{tikzcd}
\end{equation*}

Since $F^{\ab}$ contains $\mathbb{Q}^{\el}_{p,q}$, the prime $q$ is ramified in $F$. Moreover, by the primitivity of $S_p \cup \{q\}$, we have $q \not\equiv 1 \pmod{p^2}$. Hence, by local class field theory, the ramification index of $q$ in $F^{\ab}/\mathbb{Q}$ is exactly $p$. 

The prime $q$ must also be ramified in $W'/\mathbb{Q}$, for otherwise $F^{\ab}/W'$ would be ramified at $q_{W'}$, 
forcing $\mathcal{I}_{W',q} = G_{W/W'}$ and thereby contradicting Lemma \ref{lemma-application of Bouc} (i). 

Since the compositum $\mathbb{Q}_{\{q\}}W'$ is abelian over $\mathbb{Q}$, the previous argument shows that 
$q_{W'}$ is unramified in $W'\mathbb{Q}_{\{q\}}/W'$. 
Thus we deduce that $\mathbb{Q}_{\{q\}} \subset H_{W'}$.

\vskip 5pt

We now complete the proof by applying Lemma \ref{lemma-application of Bouc} to the extension $W/W'$. There are three possible cases for the relation among $\mathcal{I}_{W',p}, \mathcal{I}_{W',q}$, and $G_{W/W'}$.

\begin{itemize}
\item[(i)] If we have $\mathcal{I}_{W',p}=G_{W/W'}$, then $E_F$ is not factor equivalent to $\mathcal{A}_{G_{F/\mathbb{Q}}}$ by Lemma \ref{lemma-application of Bouc} (ii).
\item[(ii)] The case $\mathcal{I}_{W',p} \neq G_{W/W'}$ and $\mathcal{I}_{W',p} \neq \mathcal{I}_{W',q}$ (i.e., Lemma \ref{lemma-application of Bouc} (iii)) cannot occur. Otherwise, we have $\mathfrak{h}_{W'}=1$ by the proof of Lemma \ref{lemma-application of Bouc} (iii). Since $H_{W'}=W'$ contains $\mathbb{Q}_{\{q\}}$, we then obtain $G_{W/W'}=\mathcal{I}_{W',p}$ by Corollary \ref{totally ramified}, a contradiction.

\item[(iii)] If we have $\mathcal{I}_{W',p} \neq G_{W/W'}$ and $\mathcal{I}_{W',p}=\mathcal{I}_{W',q}$, then the necessary condition $(\ref{Bouc-type relation})$ associated to $W/W'$ is satisfied if and only if we have $\mathfrak{h}_{W}=\mathfrak{h}_{W'}$ by Lemma \ref{lemma-application of Bouc} (iv). Furthermore, the proof of Lemma \ref{lemma-application of Bouc} (iv) shows that this equality holds if and only if we have $\mathfrak{h}_{N}=p\mathfrak{h}_{W'}$ for every degree-$p$ extension $N$ of $W'$ in $W$ in which both $p_{W'}$ and $q_{W'}$ are ramified. Since we have $\mathbb{Q}_{\{q\}} \subseteq H_{W'}$, we have $\mathfrak{h}_{W'}=\mathfrak{h}_{N}$ for all such $N$ by Proposition \ref{cyclic pq ramified}. Thus, the necessary condition is not satisfied.
\end{itemize}
\end{proof}

\section{Relative Galois module structure of the unit lattices of totally real $p$-rational number fields}\label{relGalmod}

In this final section, we study the relative Galois module structure of the unit lattices for Galois extensions of totally real $p$-rational number fields. We begin by introducing the following notation, which is convenient for the discussion. For each Galois extension $L/K$ of number fields and each finite set $S$ of places of $K$, we write $S_{L}$ for the set of places of $L$ above $S$, and $S_{L,f}$ for the set of finite places of $L$ above $S$. We denote by $\mathcal{O}_{L,S}^{\times}$ the group $\mathcal{O}_{L,S_L}^{\times}$ of $S_L$-units of $L$, and by $\mathrm{Cl}_S(L)$ the $S_L$-ideal class group of $L$. Finally, we set $E_{L,S}:=\mathcal{O}^{\times}_{L,S}/\mu(L)$.

In \cite{Burns4}, for a \textit{fixed} general finite group $G$ and \textit{varying} Galois extensions $L/K$ of number fields with $G_{L/K} \simeq G$, Burns studied the $\mathbb{Z}_p[G]$-module structures of the pro-$p$ completions of several arithmetic objects attached to $L$. In particular, his results apply to $\mathbb{Z}_p \otimes_{\mathbb{Z}} E_{L,S}$ for any finite set $S$ of primes of $K$ containing $S_{p} \cup \Ram(L/K)$. In this section, for every Galois extension $L/K$ with $G_{L/K} \simeq G$, we fix a group isomorphism and consider $E_L$ as a $\mathbb{Z}[G]$-lattice.

This investigation of the $\mathbb{Z}_p[G]$-structure of $E_{L,S}$ for varying $L/K$ is of intrinsic interest because, as $L/K$ varies, the $\mathbb{Z}_{p}$-ranks of $\mathbb{Z}_p \otimes_{\mathbb{Z}} E_{L,S}$ are unbounded. Consequently, if the $p$-Sylow subgroup of $G$ is not cyclic of order $1$, $p$, or $p^{2}$, then infinitely many non-isomorphic indecomposable $\mathbb{Z}_{p}[G]$-lattices can appear in the Krull-Schmidt decomposition of $\mathbb{Z}_p \otimes_{\mathbb{Z}} E_{L,S}$ (cf. \cite[\S 81.A]{curtis1966representation}). Beyond this intrinsic interest, the knowledge of relative Galois module structures of the unit lattices also has significant applications in the study of tamely ramified pro-$p$ extensions of number fields (cf. \cite{hajir2021deficiency, LeeLim}).

Let $S$ be a finite set of places of $\mathbb{Q}$ containing $S_{p}$, and let $L/K$ be a Galois extension of number fields with Galois group $G$ that is unramified outside $S$. In \cite{Burns4}, Burns proved that the sum of the $\mathbb{Z}_p$-ranks of the non-projective components in a Krull-Schmidt decomposition of $\mathbb{Z}_p \otimes_{\mathbb{Z}} E_{L,S}$ (as $\mathbb{Z}_p[G]$-lattices) is bounded above by a function that depends only on $|G|$, the $p$-rank of $\mathrm{Cl}_S(L(\zeta_p))$, and $|S_{L,f}|$. By the Jordan-Zassenhaus theorem (cf. \cite[Thm. 24.2]{methodsrepre}), we obtain the following result.

\begin{theo}(\cite[Cor. 4.1]{Burns4})\label{Burnscohomology}
Let $S$ be a finite set of places of $\mathbb{Q}$ containing $S_{p}$. Let $G$ be a finite group and $b$ a natural number. Define $\mathrm{Ext}(G,S, b)$ to be the family of Galois extensions of number fields $L/K$ satisfying the following conditions :
\begin{enumerate}
    \item[(i)] $G_{L/K} \simeq G$,
    \item[(ii)] $\zeta_{p} \in K$,
    \item[(iii)] $L/K$ is unramified outside $S$,
    \item[(iv)] $\mathrm{rk}_{p}(\textrm{Cl}_{S}(L)) + |S_{L,f}| \leq b$.
\end{enumerate}
Then, there exists a finite set $\Omega$ of $\mathbb{Z}_p[G]$-lattices such that for every $L/K \in \mathrm{Ext}(G, S, b)$, there exists $X \in \Omega$ and a projective $\mathbb{Z}_p[G]$-lattice $P$ with $$\mathbb{Z}_p \otimes_{\mathbb{Z}} E_{L,S} \simeq X \oplus P$$ as $\mathbb{Z}_p[G]$-lattices.
\end{theo}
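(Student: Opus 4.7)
The plan is to reduce the statement to a uniform upper bound on the $\mathbb{Z}_{p}$-rank of the non-projective part of $\mathbb{Z}_{p}\otimes_{\mathbb{Z}}E_{L,S}$ in its Krull--Schmidt decomposition, and then invoke the Jordan--Zassenhaus theorem. Fix $G$, $S$, $b$ as in the statement. For each $L/K \in \mathrm{Ext}(G,S,b)$, use the Krull--Schmidt theorem for $\mathbb{Z}_{p}[G]$-lattices to write uniquely
\[
\mathbb{Z}_{p}\otimes_{\mathbb{Z}}E_{L,S} \simeq X_{L}\oplus P_{L},
\]
where $P_{L}$ is a maximal projective $\mathbb{Z}_{p}[G]$-direct summand and $X_{L}$ has no nonzero projective summand. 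The desired conclusion follows if there exists a constant $C=C(G,b)$ such that $\mathrm{rank}_{\mathbb{Z}_{p}}(X_{L})\leq C$ for every $L/K \in \mathrm{Ext}(G,S,b)$, since Jordan--Zassenhaus (cf. \cite[Thm.~24.2]{methodsrepre}) then yields only finitely many isomorphism classes of such $X_{L}$, and we take $\Omega$ to be the resulting finite set.

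To bound $\mathrm{rank}_{\mathbb{Z}_{p}}(X_{L})$, I would exploit that projectivity of a $\mathbb{Z}_{p}[G]$-lattice $M$ is detected by the vanishing of Tate cohomology $\hat{H}^{i}(H, M)$ for all subgroups $H\leq G$, so that after iteratively splitting off projective summands the leftover rank is controlled by $|G|$ together with the orders of $\hat{H}^{0}(H, M)$ for $H\leq G$. It therefore suffices to bound each $|\hat{H}^{0}(H, \mathbb{Z}_{p}\otimes_{\mathbb{Z}}E_{L,S})|$ in terms of the invariants in conditions (ii)--(iv). For this, one applies global duality for the $S$-ramified cohomology of $\mathbb{G}_{m}$ over $L$: the Poitou--Tate nine-term exact sequence, combined with Kummer theory, relates $\mathbb{Z}_{p}\otimes_{\mathbb{Z}}E_{L,S}$ to the pro-$p$ completion of $\mathrm{Cl}_{S}(L)$, the semi-local unit groups $\prod_{v\in S_{L,f}}\mathcal{O}_{L_{v}}^{\times}$, and $\mu_{p^{\infty}}(L)$. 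Taking Tate cohomology over each $H\leq G$, and using $\zeta_{p}\in K$ to trivialize the Galois action on the roots-of-unity contribution, one checks that the semilocal and class-group contributions are bounded in terms of $|S_{L,f}|$ and $\mathrm{rk}_{p}(\mathrm{Cl}_{S}(L))$, both at most $b$, while the $\mu_{p^{\infty}}$-contribution is bounded in terms of $|G|$ alone.

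The main obstacle is the passage from bounds on Tate cohomology to an explicit rank bound on $X_{L}$. The cleanest approach is to work in the stable category of $\mathbb{Z}_{p}[G]$-lattices, in which two lattices are isomorphic precisely modulo projective summands and $X_{L}$ represents the class of $\mathbb{Z}_{p}\otimes_{\mathbb{Z}}E_{L,S}$; an upper bound on $\sum_{H\leq G}|\hat{H}^{0}(H, \mathbb{Z}_{p}\otimes_{\mathbb{Z}}E_{L,S})|$ then translates, via standard arguments with Heller translates, into a bound on $\mathrm{rank}_{\mathbb{Z}_{p}}(X_{L})$. Once this bookkeeping is carried out, Jordan--Zassenhaus produces the finite set $\Omega$, and every $L/K \in \mathrm{Ext}(G,S,b)$ satisfies $\mathbb{Z}_{p}\otimes_{\mathbb{Z}}E_{L,S}\simeq X\oplus P$ for some $X\in\Omega$ and projective $\mathbb{Z}_{p}[G]$-lattice $P$, as required.
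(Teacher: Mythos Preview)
The paper does not give its own proof of this theorem; it is quoted from Burns \cite[Cor.~4.1]{Burns4}, and the surrounding text only records the two-step strategy: (a) bound the total $\mathbb{Z}_{p}$-rank of the non-projective summands of $\mathbb{Z}_{p}\otimes_{\mathbb{Z}}E_{L,S}$ by a quantity depending only on $|G|$, $\mathrm{rk}_{p}(\mathrm{Cl}_{S}(L))$ and $|S_{L,f}|$, and then (b) invoke Jordan--Zassenhaus. Your proposal follows exactly this outline, so at the architectural level you are aligned with the paper.

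The substantive divergence is in how you propose to carry out step (a). The paper's Remark indicates that Burns obtains the rank bound by realising $\mathbb{Z}_{p}\otimes_{\mathbb{Z}}\mathcal{O}_{L,S}^{\times}$ as $H^{1}_{\mathrm{\acute{e}t}}(\mathrm{Spec}(\mathcal{O}_{L,S}),\mathbb{Z}_{p}(1))$ and analysing the full perfect complex $R\Gamma$ (and its compactly supported variant). The point is that this complex is perfect of small amplitude, its cohomology outside degree $1$ is finite with order controlled by $\mathrm{rk}_{p}(\mathrm{Cl}_{S}(L))$ and $|S_{L,f}|$, and this \emph{complex-level} information is what forces the non-projective part of $H^{1}$ to have bounded rank.

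Your route, by contrast, tries to work only with the module $\mathbb{Z}_{p}\otimes_{\mathbb{Z}}E_{L,S}$ and bound the orders of $\hat{H}^{0}(H,-)$ via Poitou--Tate, then pass to a rank bound ``via standard arguments with Heller translates''. This last passage is the genuine gap. A bound on $|\hat{H}^{0}(H,M)|$ for all $H\le G$ does not by itself bound the rank of the non-projective part of $M$: already for $G=\mathbb{Z}/p\mathbb{Z}$ the summand $\mathbb{Z}_{p}[\zeta_{p}]$ contributes nothing to $\hat{H}^{0}$ but is not projective, so one must at least also control $\hat{H}^{-1}$. More generally, for groups of wild representation type there is no blanket statement that bounded Tate cohomology in finitely many degrees forces bounded non-projective rank; what one actually needs is that $M$ sits in a bounded complex of projectives whose other cohomology groups have bounded order, which is precisely what the \'etale-cohomological description supplies. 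Your Poitou--Tate sketch is morally pointing at the same exact sequences, but as written it does not produce such a complex, and the appeal to ``standard arguments with Heller translates'' does not close the gap.
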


\begin{rema}
Theorem \ref{Burnscohomology} was obtained by analyzing the Krull-Schmidt decomposition of \'{e}tale cohomology groups and the compactly supported $p$-adic \'{e}tale cohomology groups of general $p$-adic Galois representations over number fields. Theorem \ref{Burnscohomology} is formulated in terms of $S$-units for $S \supset S_p$ because $\mathbb{Z}_p \otimes_{\mathbb{Z}} \mathcal{O}^{\times}_{L,S}$ is isomorphic to the \'{e}tale cohomology group $H^{1}(\mathrm{Spec}(\mathcal{O}_{L,S})_{\acute{e}t}, \mathbb{Z}_{p}(1))$. The paper \cite{Burns4} also treats the ray class groups (cf. Artin-Verdier duality) and higher algebraic K-groups (cf. Voevodsky's Theorem). For further details we refer the readers to \cite[\S 4]{Burns4}.
\end{rema}

We conclude this paper by establishing that for Galois extensions of totally real $p$-rational number fields, an analogous phenomenon occurs in the relative Galois module structure of the group of \textit{ordinary} units.

\begin{theo}[Theorem \ref{maintheorem3}]\label{maintheorem3'}
Let $G$ be a finite group and $p$ an odd prime. Then, there exists a finite set $\Omega$ of $\mathbb{Z}_p[G]$-lattices such that for every Galois extension $L/K$ of totally real $p$-rational number fields with $G_{L/K} \simeq G$, there exists $X \in \Omega$ and an integer $m \geq 0$ such that $\mathbb{Z}_p \otimes_{\mathbb{Z}} E_{L}$ is factor equivalent to $X \oplus \mathbb{Z}_{p}[G]^{m}$ as $\mathbb{Z}_{p}[G]$-lattices.
\end{theo}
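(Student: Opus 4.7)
The strategy is to invoke Proposition \ref{factor-regulatorconstant ppart} to recast the theorem as an assertion about the finiteness of the set of tuples $(v_p(\mathcal{C}_\Theta(E_L)))_\Theta$ arising from the family. The opening observation is that $\mathcal{C}_\Theta(\mathbb{Z}_p[G])=1$ for every $G$-relation $\Theta$: with the pairing $(g,g')=\delta_{g,g'}$ on $\mathbb{Z}[G]$, the fixed submodule $(\mathbb{Z}[G])^H$ has $\mathbb{Z}$-basis $\{\tilde{H}g\}_{Hg\in H\backslash G}$ and Gram matrix $\bigl(\tfrac{1}{|H|}(\tilde{H}g,\tilde{H}g')\bigr)=(\delta_{Hg,Hg'})$, the identity. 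Thus appending a free direct summand $\mathbb{Z}_p[G]^m$ leaves all regulator constants invariant, and this freedom will absorb the variable $\mathbb{Q}_p[G]$-rank of $\mathbb{Z}_p\otimes_{\mathbb{Z}} E_L$.

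To compute $v_p(\mathcal{C}_\Theta(E_L))$, I would combine Bartel's formula (Theorem \ref{Bartel}) with the Brauer-Kuroda relation (Theorem \ref{Brauer-Kuroda}) to obtain
\[
\mathcal{C}_\Theta(E_L)=\mathcal{C}_\Theta(\mathbb{Z})\cdot\prod_H\bigl(w_{L^H}/(h_{L^H}\lambda(H))\bigr)^{2n_H}.
\]
Since $L$ is totally real and $p$ is odd, the factors $w_{L^H}=2$ and $\lambda(H)$ (a $2$-power, as $\mu(L)$ is $2$-torsion) contribute nothing to $v_p$, yielding
\[
v_p(\mathcal{C}_\Theta(E_L))=v_p(\mathcal{C}_\Theta(\mathbb{Z}))-2\sum_H n_H\,v_p(h_{L^H}).
\]
By Proposition \ref{descending}, every intermediate field $L^H$ is itself totally real $p$-rational, so Lemma \ref{p-class number cyclic}(i) applies to the Galois extension $L/L^H$ and gives $|v_p(h_{L^H})-v_p(h_L)|\leq v_p([L:L^H])\leq v_p(|G|)$. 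Combining with the cancellation $\sum_H n_H=0$ from Lemma \ref{constant}:
\[
\Bigl|\sum_H n_H\,v_p(h_{L^H})\Bigr|=\Bigl|\sum_H n_H\bigl(v_p(h_{L^H})-v_p(h_L)\bigr)\Bigr|\leq v_p(|G|)\sum_H|n_H|.
\]
Fixing a $\mathbb{Z}$-basis $\Theta_1,\dots,\Theta_r$ of the (finite rank) group of $G$-relations, $\sum_H|n_{H,i}|$ depends only on $G$, so each $v_p(\mathcal{C}_{\Theta_i}(E_L))$ takes values in a finite set.

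By Lemma \ref{directproduct}, values on a basis determine $v_p(\mathcal{C}_\Theta(E_L))$ for all $\Theta$, and so by Proposition \ref{factor-regulatorconstant ppart} the family produces only finitely many $\mathbb{Z}_p$-factor equivalence classes. For each realized class $\tau$, choose an extension $L_\tau/K_\tau$ with $d_\tau:=[K_\tau:\mathbb{Q}]$ minimal and set $X_\tau:=\mathbb{Z}_p\otimes_{\mathbb{Z}} E_{L_\tau}$; put $\Omega=\{X_\tau\}$, a finite set. Given $L/K$ in the family realizing class $\tau$, all archimedean places of $K$ are real and split completely in $L$, so the $G$-set $S_{L,\infty}$ consists of $[K:\mathbb{Q}]$ free orbits, yielding $\mathbb{Q}_p\otimes_{\mathbb{Z}} E_L\simeq I_{G,\mathbb{Q}_p}\oplus\mathbb{Q}_p[G]^{[K:\mathbb{Q}]-1}$ (via $I_{G,\mathbb{Q}}\oplus\mathbb{Q}\simeq\mathbb{Q}[G]$). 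Choosing $m:=[K:\mathbb{Q}]-d_\tau\geq 0$ makes $X_\tau\oplus\mathbb{Z}_p[G]^m$ have the same rational representation as $\mathbb{Z}_p\otimes_{\mathbb{Z}} E_L$, and, by the opening observation, the same regulator constants as $\mathbb{Z}_p\otimes_{\mathbb{Z}} E_L$, hence the two are factor equivalent. The main delicate step is the uniform control of class-number variation (Lemma \ref{p-class number cyclic}(i) extended across all subfields) which, via the telescoping $\sum_H n_H=0$, is what makes the range of $v_p(\mathcal{C}_\Theta(E_L))$ genuinely finite rather than merely well-defined.
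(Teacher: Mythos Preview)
Your proof is correct and follows essentially the same strategy as the paper: bound the variation of $v_p(h_{L^H})$ across subfields via Lemma \ref{p-class number cyclic}(i), use $\sum_H n_H=0$ to conclude that $v_p(\mathcal{C}_\Theta(E_L))$ ranges over a finite set for each basis relation $\Theta$, then pick a representative extension of minimal base degree for each realized tuple and pad with free summands. The only slip is calling $L/L^H$ a ``Galois extension'' (it need not be when $H$ is not normal), but Lemma \ref{p-class number cyclic}(i) does not require this, so the argument stands.
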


\begin{proof}
By Lemma \ref{p-class number cyclic} (i), for every extension $M/N$ of totally real $p$-rational number fields, we have the inequalities 
\begin{equation*}
    v_{p}(\mathfrak{h}_{N})-v_p([M:N]) \leq v_p(\mathfrak{h}_{M}) \leq v_p(\mathfrak{h}_{N}) + \log_p [M:N].
\end{equation*}
 Therefore, for any fixed $G$-relation $\Theta$ and varying Galois extensions $M/N$ of totally real $p$-rational number fields with $G_{M/N} \simeq G$, only finitely many values for $v_p(\mathcal{C}_{\Theta}(E_M))$ can occur by Lemma \ref{constant}.

Fix a $\mathbb{Z}$-basis $\Upsilon$ of the group of $G$-relations, and let $\Xi$ be the set of functions $f:\Upsilon \to \mathbb{Z}$ such that there exists a Galois extension $M/N$ of totally real $p$-rational number fields with $G_{M/N} \simeq G$, such that $$f(\Theta) = v_p(\mathcal{C}_{\Theta}(E_{M}))$$ for every $\Theta \in \Upsilon$. By the above argument, $\Xi$ is a finite set.

For each $x \in \Xi$, choose a Galois extension $L_x/K_x$ of totally real $p$-rational fields with minimal $[K_x:\mathbb{Q}]$ such that we have $G_{L_x/K_x} \simeq G$ and $x(\Theta)=v_p(\mathcal{C}_{\Theta}(E_{L_x}))$ for every $\Theta \in \Upsilon$. We now show that the set $$\Omega = \big \{ \,\mathbb{Z}_p \otimes_{\mathbb{Z}} E_{L_x}\, \big \}_{x \in \Xi}$$ satisfies the claim of the theorem.

Let $L/K$ be a Galois extension of totally real $p$-rational fields with $G_{L/K} \simeq G$. Proposition \ref{factor-regulatorconstant ppart} implies that the factor equivalence class of $\mathbb{Z}_p \otimes_{\mathbb{Z}} E_{L}$ as a $\mathbb{Z}_{p}[G]$-lattice is uniquely determined by $v_p(\mathcal{C}_{\Theta}(E_L))$ for all the $G$-relations $\Theta \in \Upsilon$.
From the construction of $\Xi$, there exists a unique $y \in \Xi$ such that we have $v_p(\mathcal{C}_{\Theta}(E_{L})) = y(\Theta)$ for every $\Theta \in \Upsilon$. We claim that $\mathbb{Z}_p \otimes_{\mathbb{Z}} E_L$ is factor equivalent to $\mathbb{Z}_p \otimes_{\mathbb{Z}} V$ as $\mathbb{Z}_p[G]$-lattices, where we have
\begin{equation*}
    V:=E_{L_y} \oplus \mathbb{Z}[G]^m, \qquad \text{with} \,\,m=\frac{[L:\mathbb{Q}] - [L_y : \mathbb{Q}]}{|G|}.
\end{equation*}


By the Dirichlet-Herbrand theorem (cf. \cite[Lem. I.3.6]{Gras3}), the two $\mathbb{Z}[G]$-lattices $E_{L}$ and $V$ have the same rational representation. Moreover, for every $\Theta \in \Upsilon$, we have
\begin{equation*}
v_p(\mathcal{C}_{\Theta}(V)) = v_p(\mathcal{C}_{\Theta}(E_{L_y})) = v_p(\mathcal{C}_{\Theta}(E_L)).
\end{equation*}
The first equality follows from Lemma \ref{directproduct} and Lemma \ref{cyclic subgroups}. The second equality follows from the construction of $E_{L_{y}}$. Therefore, by Proposition \ref{factor-regulatorconstant ppart}, we conclude that $\mathbb{Z}_p \otimes_{\mathbb{Z}} E_L$ is factor equivalent to $\mathbb{Z}_p \otimes_{\mathbb{Z}} V$ as $\mathbb{Z}_p[G]$-lattices. 
\end{proof}

As discussed in \S \ref{uniquepprime}, the arithmetic is particularly simple in a pro-$p$ tower of totally real $p$-rational fields $F_{S_p \cup \{\mathfrak{q}\}}/F$ for $(F,p, \mathfrak{q})$ satisfying \eqref{condU}.

\begin{theo}[Theorem \ref{maintheorem4}]
Let $F$ be a totally real $p$-rational number field, and let $\mathfrak{q}$ be a non-$p$-adic prime of $F$ such that $(F,p, \mathfrak{q})$ satisfies \eqref{condU}. For every Galois extension $L/K$ satisfying
\begin{equation*}
F_{\{\mathfrak{q}\}} \subseteq K \subseteq L \subset F_{S_p \cup \{\mathfrak{q}\}},  
\end{equation*}
the $\mathbb{Z}[G_{L/K}]$-lattices $E_{L}$ and $\mathcal{A}_{G_{L/K}} \oplus I_{G_{L/K}} \oplus \mathbb{Z} \oplus \mathbb{Z}[G_{L/K}]^{[K:\mathbb{Q}]-2}$ are factor equivalent.
\end{theo}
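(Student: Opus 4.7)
The plan is to apply Theorem \ref{factor-regulatorconstant}. Setting $G = G_{L/K}$, which is a $p$-group because $L/K$ lies inside the pro-$p$ extension $F_{\Sigma_p \cup \{\mathfrak{q}\}}/F$, I need to verify two things: that $\mathbb{Q} \otimes_\mathbb{Z} E_L$ and $\mathbb{Q} \otimes_\mathbb{Z} (\mathcal{A}_G \oplus I_G \oplus \mathbb{Z} \oplus \mathbb{Z}[G]^{[K:\mathbb{Q}]-2})$ are isomorphic self-dual $\mathbb{Q}[G]$-modules, and that their regulator constants agree at every $G$-relation $\Theta = \sum_{H \leq G} n_H H$. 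For the rational isomorphism, since $K$ and $L$ are totally real and $L/K$ is archimedeanly unramified, the set of infinite places of $L$ decomposes as $[K:\mathbb{Q}]$ free $G$-orbits, so the Dirichlet--Herbrand theorem (cf. Example \ref{examfactor}(ii)) together with the splitting $\mathbb{Q}[G] \cong A_G \oplus \mathbb{Q}$ gives $\mathbb{Q} \otimes_\mathbb{Z} E_L \cong A_G^{[K:\mathbb{Q}]} \oplus \mathbb{Q}^{[K:\mathbb{Q}]-1}$, which matches the rational form of the target lattice. Both are constituents of permutation representations, hence self-dual (Remark \ref{independenceonpairing}(iii)).

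Using Lemma \ref{directproduct}, Proposition \ref{Regulator constant C(Z[G]/G)}, Proposition \ref{augmentationideal}, Remark \ref{regulatorconstantconstant}, and Lemma \ref{cyclic subgroups} (applied to $\mathbb{Z}[G] = \mathbb{Z}[G/\{1\}]$), the regulator constant of the target lattice telescopes to $\prod_{H \leq G} |H|^{-n_H}$. On the arithmetic side, Theorem \ref{Bartel} gives
\[
\mathcal{C}_\Theta(E_L) = \mathcal{C}_\Theta(\mathbb{Z}) \cdot \prod_{H \leq G} (R_{L^H}/\lambda(H))^{2 n_H};
\]
since $L$ is totally real and $|H|$ is an odd prime power, $\mu(L) = \{\pm 1\}$ has trivial $H$-cohomology, so $\lambda(H) = 1$. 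Combining this with the Brauer--Kuroda formula (Theorem \ref{Brauer-Kuroda}), together with $w_{L^H} = 2$ and $\sum_H n_H = 0$ (Lemma \ref{constant}), yields
\[
\mathcal{C}_\Theta(E_L) = \prod_{H \leq G} |H|^{-n_H} \cdot \Bigl(\prod_{H \leq G} h_{L^H}^{n_H}\Bigr)^{-2}.
\]
Comparing the two formulas, the proof reduces to showing that $\prod_H h_{L^H}^{n_H} = 1$ in $\mathbb{Q}^\times$.

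The main obstacle is that hypothesis (U) only controls the $p$-part of the class numbers, whereas the product above could a priori contribute at other primes. I would handle the two regimes separately. For the $p$-part: each intermediate field $L^H$ is totally real (being a subfield of $L$) and $p$-rational (applying Proposition \ref{descending} to the Galois closure of $L^H$ inside $F_{\Sigma_p \cup \{\mathfrak{q}\}}$, which is $p$-rational by Theorem \ref{p-rational p-extensions}), and it contains $K \supseteq F_{\{\mathfrak{q}\}}$; Proposition \ref{pclassnumber-inertiasubgroup} therefore forces $h_{L^H, p} = 1$, whence $v_p\bigl(\prod_H h_{L^H}^{n_H}\bigr) = 0$. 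For a prime $l \neq p$, since $G$ is a $p$-group, Proposition \ref{pdivisibility} applied with $B = G$ (so that $G/B$ is trivially cyclic and $l \nmid |B|$) gives $v_l(\mathcal{C}_\Theta(E_L)) = v_l(\mathcal{C}_\Theta(\mathbb{Z})) = 0$; inserting this into the displayed formula for $\mathcal{C}_\Theta(E_L)$ forces $v_l\bigl(\prod_H h_{L^H}^{n_H}\bigr) = 0$ as well. Since this product is a positive rational whose valuation at every prime vanishes, it equals $1$, completing the proof.
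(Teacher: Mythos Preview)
Your proof is correct and follows essentially the same route as the paper's: both reduce via Theorem \ref{factor-regulatorconstant} to matching regulator constants, compute the target side to be $\mathcal{C}_\Theta(\mathbb{Z})$ using Corollary \ref{inverse} and Lemma \ref{cyclic subgroups}, and handle the arithmetic side by combining Theorem \ref{Bartel} with Brauer--Kuroda, then kill the class-number product using Proposition \ref{pclassnumber-inertiasubgroup} for the $p$-part and Proposition \ref{pdivisibility} for the prime-to-$p$ part. Your write-up is in fact slightly more explicit than the paper's (you spell out why $\lambda(H)=1$ and why each $L^H$ is $p$-rational), but the argument is the same; note that your detour through the Galois closure is harmless but unnecessary, since $L$ itself is already $p$-rational and Proposition \ref{descending} applies directly to $L^H \subseteq L$.
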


\begin{proof}
Let $L/K$ be a Galois extension of totally real $p$-rational fields as in the statement of the theorem. From the isomorphisms $\mathbb{Q}[G_{L/K}] \simeq \mathbb{Q} \otimes_{\mathbb{Z}} (I_{G_{L/K}} \oplus \mathbb{Z})$ of $\mathbb{Q}[G_{L/K}]$-modules, we have an isomorphism
\begin{equation*}
    \mathbb{Q} \otimes_{\mathbb{Z}} E_{L} \simeq \mathbb{Q} \otimes_{\mathbb{Z}} (\mathcal{A}_{G_{L/K}} \oplus I_{G_{L/K}} \oplus \mathbb{Z} \oplus \mathbb{Z}[G_{L/K}]^{[K:\mathbb{Q}]-2})
\end{equation*}
of $\mathbb{Q}[G_{L/K}]$-modules. By Theorem \ref{factor-regulatorconstant}, it remains to show that $E_{L}$ and $\mathcal{A}_{G_{L/K}} \oplus I_{G_{L/K}} \oplus \mathbb{Z} \oplus \mathbb{Z}[G_{L/K}]^{[K:\mathbb{Q}]-2}$ have the same regulator constants for all $G$-relations of $G_{L/K}$. By the Brauer-Kuroda theorem and Theorem \ref{Bartel}, for a $G$-relation $\Theta = \sum_{H \leq G_{L/K}} n_H H$, we have
\begin{equation*}
    \mathcal{C}_{\Theta}(E_L) = \mathcal{C}_{\Theta}(\mathbb{Z}) \cdot \underset{H \leq G_{L/K}}{\prod}\bigg ( \frac{w_{L^H}}{h_{L^H}\lambda(H)} \bigg)^{2n_H}.
\end{equation*}
As a consequence, we have $\mathcal{C}_{\Theta}(E_L)=\mathcal{C}_{\Theta}(\mathbb{Z})$ by the argument used in the proof of Proposition \ref{pextensions} and Proposition \ref{pclassnumber-inertiasubgroup}. On the other hand, we have
\begin{equation*}
    \mathcal{C}_{\Theta}(\mathcal{A}_{G_{L/K}} \oplus I_{G_{L/K}} \oplus \mathbb{Z} \oplus \mathbb{Z}[G_{L/K}]^{[K:\mathbb{Q}]-2}) = \mathcal{C}_{\Theta}(\mathbb{Z})
\end{equation*}
by Lemma \ref{directproduct}, Lemma \ref{cyclic subgroups}, and Corollary \ref{inverse}.
\end{proof}

\begin{rema}
We remark that our results can be extended to other families of Galois extensions provided that the Galois group and its relationship with inertia subgroups at the ramified primes are sufficiently simple.

One easy example is when $F$ is an imaginary number field with a unique $p$-adic prime and $p$-class number $1$. Then $G_{S_p}(F)$ coincides with the inertia subgroup at the $p$-adic place. Hence, the $p$-adic prime of $F$ is totally ramified in $F_{S_p}$. As a consequence, every extension of $F$ in $F_{S_p}$ has $p$-class number $1$.

Thus Theorem \ref{maintheorem2} holds for every extension $L$ of $F$ in $F_{S_p}$ that is Galois over an imaginary quadratic field and has non-cyclic $G_{L/F}$. Moreover, Theorem \ref{maintheorem4} also applies to every Galois extension of number fields in the tower $F_{S_p}/F$, provided $F$ does not contain the $p$th roots of unity. 

As an explicit illustration, for $p=3$ one can take $F=\mathbb{Q}(\sqrt{6}, \sqrt{-1})$ \cite[p.~239]{Wingberglg}. We note that this field is not $3$-rational.
\end{rema}

\begin{rema}
After this work was completed, the results of Burns in the direction of Theorem~\ref{Burnscohomology} on the $S$-unit group were extended to ordinary unit lattices in \cite[Thm.~A]{KumonLim}. Since the classification of integral representations is unavailable in general, information on the $\mathbb{Z}_p$-ranks of the non-projective components alone does not effectively bound the number of possible $\mathbb{Z}_p[G]$-module structures (cf. \cite[Thm. B]{KumonLim}).

Theorem~\ref{maintheorem3} shows that the number of possible factor equivalence classes of unit lattices of totally real $p$-rational fields—depending only on the set of $G$-relations—is highly restricted, in sharp contrast with genus equivalence. This rigidity also indicates, however, that factor equivalence itself provides only limited information about the underlying Galois module structure.

For readers interested in the Krull–Schmidt decomposition of unit lattices in cyclic $p$-extensions of totally real $p$-rational fields, we refer to \cite{BLM}. In a different direction, Ozaki obtained a result \cite{Ozaki2025} on the Galois structure of unit lattices in cyclic $p$-extensions, indicating that the range of genus equivalence classes of the unit lattice is essentially as large as that of integral $\mathbb{Z}_p[G]$-modules.
\end{rema}

\bibliographystyle{plain}
\bibliography{refs.bib}

\vskip 30pt

\noindent École Supérieure de l'Education et de la Formation Oujda 60000, Morocco \\
Email address: z.bouazzaoui@ump.ac.ma

\vskip 30pt

\noindent Institute of Mathematical Sciences, Ewha Womans University, Seoul 03760, Republic of Korea \\
Email address: donghyeokklim@gmail.com

\end{document}